\def\from{\colon}
\DeclareMathOperator{\ab}{ab}
\DeclareMathOperator{\rec}{rec}
\DeclareMathOperator{\nr}{nr}
\def\C{\mathbb{C}}
\def\Q{\mathbb{Q}}
\def\Z{\mathbb{Z}}
\def\FF{\mathbb{F}}
\def\OO{\mathcal{O}}
\def\M{\mathcal{M}}
\def\V{V}
\def\gp{\mathfrak{p}}
\def\isom{\cong}
\newcommand{\powerseries}[1]{\llbracket #1 \rrbracket}
\newcommand{\laurentseries}[1]{(\!(#1)\!)}
\newcommand{\injects}{\hookrightarrow}
\DeclareMathOperator{\Gal}{Gal}
\DeclareMathOperator{\End}{End}
\DeclareMathOperator{\Aut}{Aut}
\DeclareMathOperator{\pr}{pr}
\DeclareMathOperator{\Spec}{Spec}
\DeclareMathOperator{\Spa}{Spa}
\DeclareMathOperator{\Hom}{Hom}
\DeclareMathOperator{\Spf}{Spf}
\DeclareMathOperator{\univ}{univ}
\DeclareMathOperator{\GL}{GL}
\DeclareMathOperator{\ad}{ad}
\DeclareMathOperator{\Nilp}{Nilp}
\DeclareMathOperator{\CM}{CM}
\DeclareMathOperator{\Fr}{Fr}
\newcommand{\abs}[1]{\left\lvert #1 \right\rvert}
\newcommand{\class}[1]{\left< #1 \right>}
\newcommand{\set}[1]{\left\{ #1 \right\}}
\newcommand{\norm}[1]{\left\lVert #1 \right\rVert}
\newcommand{\ceil}[1]{\left\lceil #1 \right\rceil}
\newcommand{\floor}[1]{\left\lfloor #1 \right\rfloor}
\newcommand{\tatealgebra}{\class}
\DeclareMathOperator{\Nil}{Nil}
\def\Sets{\mathbf{Sets}}
\def\Vect{\mathbf{Vect}}
\def\bG{\hat{\mathbf{G}}}
\def\F{{K}}
\def\V{{\mathcal{V}}}
\def\bG{\mathbf{G}}
\def\Sets{\mathbf{Sets}}
\def\Mod{\mathbf{Mod}}
\def\Alg{\mathbf{Alg}}
\def\Nilp{\mathbf{Nilp}}
\def\Vect{\mathbf{Vect}}
\def\TopVect{\mathbf{TopVect}}
\def\gp{\mathfrak{p}}
\def\gP{\mathfrak{P}}
\numberwithin{equation}{subsection}
\newtheorem{Theorem}{Theorem}[subsection]
\numberwithin{Theorem}{subsection}
\newtheorem{lemma}[Theorem]{Lemma}
\newtheorem{prop}[Theorem]{Proposition}
\theoremstyle{definition}
\newtheorem{defn}[Theorem]{Definition}
\title{Formal vector spaces over a local field of positive characteristic}
\author{Jared Weinstein}
\begin{document}
\maketitle

\begin{abstract}
Let $\OO_K=\mathbf{F}_q\powerseries{\pi}$ be the ring of power series in one variable over a finite field, and let $K$ be its fraction field.  We introduce the notion of a ``formal $K$-vector space";  this is a certain kind of $K$-vector space object in the category of formal schemes. This concept runs parallel to the established notion of a formal $\OO_K$-module, but in many ways formal $K$-vector spaces are much simpler objects. Our main result concerns the Lubin-Tate tower, which plays a vital role in the local Langlands correspondence for $GL_n(K)$. Let $A_m$ be the complete local ring parametrizing deformations of a fixed formal $\OO_K$-module over the residue field, together with Drinfeld level $\pi^m$ structure. We show that the completion of the union of the $A_m$ has a surprisingly simple description in terms of formal $K$-vector spaces. This description shows that the generic fiber of the Lubin-Tate tower at infinite level carries the structure of a perfectoid space. As an application, we find a family of open affinoid neighborhoods of this perfectoid space whose special fibers are certain remarkable varieties over a finite field which we are able to make completely explicit. It is shown in joint work with Mitya Boyarchenko that the $\ell$-adic cohomology of these varieties realizes the local Langlands correspondence for a certain class of supercuspidal representations of $GL_n(K)$.
\end{abstract}

\section{Introduction}
Let $K$ be a local nonarchimedean field, and let $G_0$ be a one-dimensional formal $\OO_K$-module defined over the algebraic closure of the residue field of $K$.  In~\cite{DrinfeldElliptic} Drinfeld shows that the parameter space of deformations of $G_0$ is an affine formal scheme $\Spf A_0$, where $A_0$ is isomorphic to a power series ring in $n-1$ variables over $\OO_K$, where $n$ is the height of $G_0$.   By considering deformations of $G_0$ equipped with Drinfeld level structures, Drinfeld constructs a tower $A_0\to A_1\to\dots$ of regular local rings.  These rings play a vital role in the local Langlands correspondence for $\GL_n(K)$.  Namely, the direct limit as $m\to \infty$ of the space of vanishing cycles of the formal scheme $\Spf A_m$ is known to realize the local Langlands correspondence (\cite{Boyer}, \cite{HarrisTaylor}).

In this paper we consider the case where $K$ has positive characteristic.  Let $A$ be the completion of $\varinjlim A_m$ with respect to the topology induced by the maximal ideal of $A_0$.  We give a surprisingly simple description of $A$, which allows us to make the actions of $\GL_n(K)$ and its inner twist completely explicit.  Our description is in terms of {\em formal $K$-vector spaces}, which are certain $K$-vector space objects in the category of formal schemes.  For instance, one obtains a formal $K$-vector space from $G_0$ by considering the ``universal cover" $\tilde{G}_0=\varprojlim G_0$, where the transition map is multiplication by a uniformizer of $K$.  In the mixed characteristic setting, a similar construction appears in \cite{FaltingsCoverings}, where it plays a role in the geometry of the period domain associated to $G_0$, and also in the preprint \cite{FarguesFontaine}, where the term ``espace vectoriel formel" is used.

A striking property of formal $K$-vector spaces is that they lift canonically through (topologically) nilpotent thickenings.  In particular we may lift $\tilde{G}_0$ to a formal $K$-vector space $\tilde{G}$ defined over $\Spf\OO_K$.  Our main result (Thm.~\ref{mainthm}) is that $\Spf A$ is the formal subscheme of $\tilde{G}^n$ cut out by a certain determinant condition.

To give the flavor of our result, let us spell out the $n=1$ case.  Let $q$ be the size of the residue field of $K$ and let $\pi$ be a uniformizer, so that $K=\FF_q\laurentseries{\pi}$.  Let $K^{\nr}=\overline{\FF}_q\laurentseries{\pi}$.  Then $A_m=\OO_{K_m}$ is the ring of integers in the field $K_m$ obtained by adjoining to $K^{\nr}$ a primitive root $t_m$ of the $m$th iterate of a Lubin-Tate power series $f(T)$ (for instance $f(T)=\pi T+ T^q$).  We assume that $f(t_m)=t_{m-1}$.  Let $K_\infty$ be the union of the $K_m$;  then $A$ is the ring of integers in the completion of $K_\infty$.  Consider the element $t=\lim_{m\to\infty} t_m^{q^m}$, which converges in $A$.  Then
\begin{equation}
\label{Apowerseries}
A = \overline{\FF}_q\powerseries{t^{1/q^\infty}},
\end{equation}
where the ring on the right is the ring of {\em fractional power series}; {\em i.e.}, it is the $t$-adic completion of the perfect closure of $\overline{\FF}_q[t]$.  The Galois group $\Gal(K_\infty/K^{\nr})$ operates on $A$.  With respect to the (suitably normalized) isomorphism $\Gal(K_\infty/K^{\nr})\isom\OO_K^\times$ of class field theory, we can describe this action as follows:  a power series $\sum_m a_m\pi^m\in \OO_K^\times$ operates on $A$ by sending $t$ to the $\FF_q$-linear power series $\sum_m a_mt^{q^m}$.  A curious feature of this story is that the element $\pi$ lies in $A$, and must be expressible as a fractional power series in $t$ by Eq.~\eqref{Apowerseries} which is $\OO_K^\times$-invariant, but there seems to be no simple formula for it.   

In the general case, we find an isomorphism
\[ A\isom\overline{\FF}_q\powerseries{X_1^{1/q^\infty},\dots,X_n^{1/q^\infty}}.\]  
With respect to this isomorphism, the action of $\GL_n(\OO_K)$ can be made perfectly explicit.  Let $g\in \GL_n(\OO_K)$ have $ij$th entry equal to $\sum_{m\geq 0} a_m^{(ij)}\pi^m$.  Then $g$ sends $X_i$ to 
\[ \sum_{j=1}^n a_m^{(ij)}X_j^{q^m}. \]
Let $J^0$ be the group of automorphisms of $G_0$;  then $J^0$ is the group of units in the algebra of noncommutative power series $\sum_{m\geq 0}a_m\varpi^m$ over $\FF_{q^n}$, where the indeterminate $\varpi$ satisfies $\varpi a=a^q \varpi$ for all $a\in\FF_{q^n}$.  Then the action of $J^0$ on $A$ can also be made explicit:  such a power series sends each $X_i$ to $\sum_{m\geq 0} a_mX_i^{q^m}$.  In contrast, the action of $J^0$ on $A_0=\OO_{\hat{K}^{\nr}}\powerseries{u_1,\dots,u_{n-1}}$ is a complete mystery.  

%However, there seems to be no easy expression for the image of the formal parameters in $A_m$ under the inclusion $A_m\injects A$.  In other words, in passing to infinite level, we profit from the 

With such an explicit description of the Lubin-Tate tower at infinite level, there arises the tantalizing hope for a purely local proof of the local Langlands correspondence.  In \S\ref{affinoids} we present some encouraging results in this direction.  Let $\mathcal{M}^{\ad}_{\overline{\eta}}$ be the adic geometric generic fiber of $\Spf A$. Then $\mathcal{M}^{\ad}_{\overline{\eta}}$ is a perfectoid space in the sense of \cite{Sch}.  We construct a nested family $\mathcal{Z}_1,\mathcal{Z}_2,\dots$ of open affinoid subspaces of $\mathcal{M}^{\ad}_{\overline{\eta}}$.  For each $m$ we compute the special fiber $\mathcal{Z}_m$ and observe that it is the perfect closure of a nonsingular affine variety $X_m$ which descends to the prime field.  The actions of the relevant groups on $X_m$ remain perfectly explicit.

Therefore it becomes urgent to compute the $\ell$-adic cohomology of the varieties $X_m$.  This is the content of joint work with Mitya Boyarchenko, which computes the cohomology of $X_m$ in every degree.  See \cite{BW} for the details.   The $X_m$ are quite remarkable.  One interesting property is that the $q^n$th power Frobenius map acts as the scalar $(-1)^{i-1}q^{ni/2}$ on the degree $i$ compactly supported cohomology of $X_m$.  Another property is that there is a large group $U$ of automorphisms of $X_m$ such that the representation of $U$ on $\bigoplus_i H^i_c(X_m,\overline{\Q}_\ell)$ decomposes into irreducibles {\em with multiplicity one}. \cite{BW} also confirms that the cohomology in the middle degree accounts for the local Langlands correspondence for a class of supercuspidals of $\GL_n(K)$.  This class consists of the supercuspidal representations of $\GL_n(K)$ whose Weil parameter is induced from a one-dimensional character of the degree $n$ unramified extension of $K$ of conductor $\pi^{m+1}$, so long as that character satisfies a certain primitivity condition. 

%It is important to note that our methods do not reprove the existence of the Langlands parameters for these supercuspidals, but rather confirms their appearance in the cohomology of the Lubin-Tate tower.  
We find it likely that there are enough special affinoids in $\mathcal{M}^{\ad}_{\overline{\eta}}$ to account for the entire supercuspidal spectrum of $\GL_n(K)$, and remain hopeful that their study could lead to a purely local proof of the existence of the local Langlands correspondence.  The method of \cite{BW} produces {\em some} correspondence between supercuspidals and Weil parameters;  this is shown manually to agree with the local Langlands correspondence, which happens to be known explicitly in these cases by work of Kazhdan and Henniart.   For general supercuspidals it might be possible to check the correspondence afforded by the special affinoids against the purely local characterization of the local Langlands correspondence given in \cite{ScholzeLLC}.
%This approach will not succeed in general, because the local Langlands correspondence is not known explicitly.   
%%and what will happen in general?

A detailed study of the Lubin-Tate tower in equal characteristic has been carried out by Genestier and Lafforgue, in the second half of the book \cite{Fargues}.  There the authors construct an isomorphism between two formal schemes:  one which is the inverse limit of formal models of the layers of the Lubin-Tate tower, and one which is the inverse limit of formal models of the layers of the Drinfeld tower.  These formal schemes are rather more intricate than our $\Spf A$--indeed, the special fiber of the Genestier-Lafforgue model is an infinite configuration of infinitely generated schemes, whereas the special fiber of $\Spf A$ is just a point.  However, one notices that a significant role is played by the determinant morphism in both constructions.

Every result in this paper has an analogue in the mixed characteristic case.   All Rapoport-Zink spaces at infinite level are perfectoid spaces which can be described in terms of ``formal linear algebra":  this is the subject of forthcoming work with Peter Scholze.  The results of \S\ref{affinoids} hold in the mixed characteristic case as well--the varieties $X_m$ should be precisely the same--but the computations slightly more difficult because the operations in the relevant formal vector space $\tilde{G}_0$ cannot be made quite so explicit.

We thank Mitya Boyarchenko for pointing out many errors in an earlier draft of this paper.

\subsection{Notation and basic definitions}
\begin{defn} A topological ring $R$ is {\em adic} if there exists an ideal $I\subset R$ such that $R$ is separated and complete for the $I$-adic topology.  Such an $I$ is called an {\em ideal of definition} for $R$.  
\end{defn}

We will be working with the following categories (in addition to the obvious $\Sets$):
\begin{itemize}
\item $\Vect_k$, vector spaces over a field $k$
\item $\TopVect_k$, topological vector spaces over a topological field $k$
\item $\Mod_R$, modules over a ring $R$
\item $\Alg_R$, $R$-algebras
\item $\Nilp_R$, $R$-algebras in which $I$ is nilpotent (where $R$ is an adic ring with ideal of definition $I$)
\end{itemize}

We often consider covariant functors $\mathcal{F}\from\Alg_R\to \Sets$, where $R$ is some ring.  Recall that such a functor is {\em pro-representable} by an affine formal scheme $\Spf A$ lying over $\Spec R$ if there is an ideal of definition $I$ of $A$ such that $\mathcal{F}(S)=\varinjlim \Hom_{\Alg_R}(A/I^n,S)$ for all $R$-algebras $S$.  In this scenario we will often confuse the functor $\mathcal{F}$ with the formal scheme which pro-represents it.

More generally, suppose $R$ is an adic ring with ideal of definition $I$.  A covariant functor $\mathcal{F}\from \Nilp_R\to \Sets$ is pro-representable by an affine formal scheme $\Spf A$ if there is an ideal of definition $J$ of $A$ containing $I$ such that $\mathcal{F}(S)=\varinjlim \Hom_{\Nilp_R}(A/J^n,S)$ for all objects $S$ of $\Nilp_R$.  In that case we have a morphism of formal schemes $\Spf A\to \Spf R$.

%Let $K$ be a local nonarchimedean field, with ring of integers $\OO_K$, uniformizer $\pi$, and residue field $\FF_q$.  

\section{Formal vector spaces}

\subsection{Formal $\FF_q$-vector spaces and coordinate modules}

For any $\FF_q$-algebra $R$, write $\tau\from R\to R$ for the $q$th power Frobenius endomorphism.  Let $\Nil(R)$ be the nilpotent radical of $R$.  The functor $R\mapsto \Nil(R)$ is pro-representable by the formal scheme $\Spf\FF_q\powerseries{X}$.   The functor $\Nil^\flat$ which assigns to $R$ the projective limit $\Nil^\flat(R)=\varprojlim_\tau \Nil(R)$ is pro-representable by the formal spectrum of the $X$-adic completion of $\varinjlim_\tau \FF_q\powerseries{X}$, which we call $\FF_q\powerseries{X^{1/q^\infty}}$.  Explicitly, $\FF_q\powerseries{X^{1/q^\infty}}$ is the ring of fractional power series of the form $\sum_\alpha c_\alpha X^\alpha$, where $\alpha\in\Z[1/q]_{\geq 0}$, subject to the constraint that for every $N>0$, there are only finitely many nonzero coefficients $c_\alpha$ with $\alpha<N$.

For any perfect field $k$ containing $\FF_q$, and any $d\geq 1$, let $\bG_{a,k}^{\flat,\oplus d}\from\Alg_k\to \TopVect_{\FF_q}$ be the functor defined by
\[ \bG_{a,k}^{\flat,\oplus d}(R)=\varprojlim_{\tau}\Nil(R)^{\oplus d}. \]
(The topology on $\bG_{a,k}^{\flat,\oplus d}(R)$ is the inverse limit topology.)   Then $\bG_{a,k}^{\flat,\oplus d}$ is pro-representable by a formal scheme, namely $\Spf k\powerseries{X_1^{1/q^\infty},\dots,X_n^{1/q^\infty}}$.   Thus $\mathbf{G}_{a,k}^{\flat,\oplus d}$ is an $\FF_q$-vector space object in the category of formal schemes over $\Spec k$.

\begin{defn}\label{Fqvs} Let $k$ be a perfect field containing $\FF_q$.  A {\em formal $\FF_q$-vector space} of dimension $d$ over $k$ is a
functor $\V\from \Alg_k\to  \TopVect_{\FF_q}$, such that there is an isomorphism $\V\approx\mathbf{G}_{a,k}^{\flat,\oplus d}$.  Morphisms between formal $\FF_q$-vector spaces are natural transformations between functors.
\end{defn}
%The isomorphism $\V\approx\mathbf{G}_{a,k}^{\oplus d}$ is not considered part of the structure of $\V$.

The $q$th power Frobenius morphism $\tau$ induces an $\FF_q$-linear automorphism of $\bG_{a,k}^{\flat}$.  The ring $\End_{\FF_q}\bG_{a,k}^{\flat}$ of $\FF_q$-linear endomorphisms of $\bG_{a,k}^{\flat}$ is the ring of noncommutative formal Laurent series $k\laurentseries{\tau}$, where for $\alpha\in k$, $\tau\alpha=\alpha^q\tau$.  Note that $k\laurentseries{\tau}$ is naturally a topological ring, and for any $k$-algebra $R$, the action of $k\laurentseries{\tau}$ on the topological $\FF_q$-vector space $\bG_{a,k}^{\flat}(R)$ is continuous.

The following notion is adapted from the definition of ``module de coordonn\'ees" appearing in \cite{Genestier}, Ch. I.  

\begin{defn}\label{coordinatemodule}  Let $\V$ be a formal $\FF_q$-vector space over $k$.  The {\em coordinate module} of $\V$ is the $k$-vector space $M(\V)=\Hom(\V,\bG_{a,k}^{\flat})$.  (The $\Hom$ is in the category of formal $\FF_q$-vector spaces over $k$.)
\end{defn}

By the remark immediately preceding the definition, $M(\V)$ is naturally a left $k\laurentseries{\tau}$-module which is free of rank $d$.

\begin{prop} \label{antiequiv} The functor $\V\mapsto M(\V)$ is an anti-equivalence from the category of formal $\FF_q$-vector spaces over $k$ to the category of free left $k\laurentseries{\tau}$-modules of finite rank.
\end{prop}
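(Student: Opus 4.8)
The plan is to exhibit $M=\Hom(-,\bG_{a,k}^{\flat})$ as an additive contravariant functor and then reduce both full faithfulness and essential surjectivity to the ``rank one'' object $\bG_{a,k}^{\flat}$, whose $\FF_q$-linear endomorphism ring is, by the discussion preceding the statement, the skew Laurent series ring $k\laurentseries{\tau}$. First I would record the formal structure of the source category. Because $\bG_{a,k}^{\flat,\oplus d}(R)=\bG_{a,k}^{\flat}(R)^{\oplus d}$ and finite products coincide with finite coproducts in $\TopVect_{\FF_q}$ (hence in a functor category with values there), $\bG_{a,k}^{\flat,\oplus d}$ is simultaneously a product and a coproduct of $d$ copies of $\bG_{a,k}^{\flat}$ inside the category of formal $\FF_q$-vector spaces; thus that category is additive with biproducts, and by Definition~\ref{Fqvs} every object is isomorphic to a finite biproduct of copies of $\bG_{a,k}^{\flat}$. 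For a morphism $f\from\V\to\mathcal{W}$, the rule $M(f)\from \psi\mapsto\psi\circ f$ is left $k\laurentseries{\tau}$-linear by associativity of composition, so $M$ is a contravariant functor into left $k\laurentseries{\tau}$-modules; being a $\Hom$ out of its argument, $M$ carries finite biproducts to finite biproducts.

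Next comes the rank one computation. By definition $M(\bG_{a,k}^{\flat})=\End_{\FF_q}(\bG_{a,k}^{\flat})=k\laurentseries{\tau}$, regarded as the free left module of rank one over itself, and therefore $M(\bG_{a,k}^{\flat,\oplus d})=k\laurentseries{\tau}^{\oplus d}$. In particular $M$ does take values in free left $k\laurentseries{\tau}$-modules of finite rank, and it is essentially surjective because every such module is isomorphic to some $k\laurentseries{\tau}^{\oplus d}=M(\bG_{a,k}^{\flat,\oplus d})$. On endomorphisms of $\bG_{a,k}^{\flat}$ itself, $M$ sends $e$ to the precomposition map $\psi\mapsto\psi\circ e$, which under the identification $M(\bG_{a,k}^{\flat})=k\laurentseries{\tau}$ is exactly right multiplication by $e$; and $r\mapsto(\text{right multiplication by }r)$ is the canonical bijective anti-homomorphism from any ring $R$ onto $\End_{R}(R)=R^{\mathrm{op}}$. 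Hence $M$ is bijective on $\Hom$-sets between copies of $\bG_{a,k}^{\flat}$.

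Finally I would promote this to full faithfulness in general. Writing $\V\approx\bG_{a,k}^{\flat,\oplus d}$ and $\mathcal{W}\approx\bG_{a,k}^{\flat,\oplus e}$ and using that $\V$ is a coproduct and $\mathcal{W}$ a product, one identifies $\Hom(\V,\mathcal{W})$ with a $d\times e$ array of copies of $\End_{\FF_q}(\bG_{a,k}^{\flat})$, and similarly $\Hom_{k\laurentseries{\tau}}(M(\mathcal{W}),M(\V))$ with a $d\times e$ array of copies of $\End_{k\laurentseries{\tau}}(k\laurentseries{\tau})$; since $M$ preserves these (co)products and acts componentwise by the rank one anti-isomorphism above, it is bijective on this $\Hom$-set, which is full faithfulness. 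Together with essential surjectivity, $M$ is an anti-equivalence. The whole argument is formal once one is handed the identification $\End_{\FF_q}(\bG_{a,k}^{\flat})=k\laurentseries{\tau}$ stated before the proposition; accordingly I expect no serious obstacle, only the mild bookkeeping that $\Hom(-,\bG_{a,k}^{\flat})$ genuinely interchanges the biproduct structure of the source with that of $k\laurentseries{\tau}$-modules (and, if one wishes to know that the integer $d$ is an invariant of $\V$, the observation that $k\laurentseries{\tau}$ is a division ring because $k$ is perfect, so it has invariant basis number).
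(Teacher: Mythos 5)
Your argument is correct, but it takes a genuinely different route from the paper's. The paper proves the proposition by exhibiting an explicit quasi-inverse: to a free left $k\laurentseries{\tau}$-module $M$ it attaches the functor $R\mapsto\Hom_{F,\tau}(M,\Nil(R))$ of continuous maps $e$ satisfying $e(\tau m)=e(m)^q$, and observes that this is pro-representable by a formal $\FF_q$-vector space. You instead run a Morita-style argument: the source category is additive with every object a finite biproduct of copies of $\bG_{a,k}^{\flat}$, so both full faithfulness and essential surjectivity reduce to the single rank-one computation $\End_{\FF_q}(\bG_{a,k}^{\flat})=k\laurentseries{\tau}$ (which the paper supplies just before the proposition) together with the canonical anti-isomorphism of a ring with its ring of right multiplications. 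Both routes are sound, and your bookkeeping with biproducts is the standard way to make the matrix identification of $\Hom$-sets rigorous. What the paper's approach buys is that the explicit inverse is not merely a proof device: the same formula is reused immediately afterwards, in the proof of Prop.~\ref{antiequivK} and in Lemma~\ref{Mstar}, where $\V(R)$ is identified with $(M^*\hat{\otimes}\Nil(R))^{F\otimes\tau}$ — an identification on which the multilinear algebra of \S\ref{multilinear} depends. Your approach gives a cleaner, purely formal proof of the equivalence itself (and your aside that $k\laurentseries{\tau}$ is a division ring, hence has invariant basis number, is the right way to see that $d$ is well defined), but one would still need to write down the explicit inverse separately before it can be used in the sequel.
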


\begin{proof} We describe the inverse functor.  If $M$ is a free $k\laurentseries{\tau}$-module of finite rank, let $\V\from \Alg_k\to \TopVect_{\FF_q}$ be the functor which inputs a $k$-algebra $R$ and outputs the set of continuous homomorphisms $e\from M\to \Nil(R)$ which satisfy $e(\tau m)=e(m)^q$ for all $m\in M$.  (Here $M$ has the topology induced from $k\laurentseries{\tau}$, while $\Nil(R)$ has the discrete topology.)  Then $\V$ is pro-representable by the desired formal $\FF_q$-vector space.
\end{proof}

\subsection{Formal $K$-vector spaces}
Now suppose $K$ is a local field of positive characteristic.  Write $K=\FF_q\laurentseries{\pi}$.

\begin{defn} Let $k$ be a perfect field containing $\FF_q$.  A {\em formal $K$-vector space} of dimension $d$ over $k$ is a functor $\V\from \Alg_k\to \TopVect_K$, whose composition with the forgetful functor $\TopVect_K\to\TopVect_{\FF_q}$ is a formal $\FF_q$-vector space of dimension $d$ in the sense of Defn.~\ref{Fqvs}.
\end{defn}

In the case that $\V$ has underlying formal $\FF_q$-vector space $\bG^{\flat}_{a,k}$, then giving the $K$-vector space structure of $\V$ is one and the same as giving a continuous homomorphism $K\to \End\bG_{a,k}^{\flat}=k\laurentseries{\tau}$, which is in turn the same as specifying the image of $\pi\in K$, which must be an invertible, topologically nilpotent element of $k\laurentseries{\tau}$.  One sees the analogy with Drinfeld modules:  a one-dimensional formal $K$-vector space is the local field version of a Drinfeld module, with the additional twist that the action of $\pi$ has been rendered invertible.

If $\V$ is a formal $K$-vector space over $k$, then $M(\V)$ naturally becomes a module over $K\hat{\otimes}_{\FF_q} k\laurentseries{\tau}$.  That is, $M(\V)$ is a topological $K\hat{\otimes}_{\FF_q} k$-vector space together with a topologically nilpotent $1\otimes \tau$-semilinear operator $F\from M(\V)\to M(\V)$.   Here we have abused notation slightly in writing $\tau\from k\to k$ as the $q$th power Frobenius endomorphism of $k$.  Note that $K\hat{\otimes}_{\FF_q} k=k\laurentseries{\pi}$ is a field.

It is easy to see that $M(\V)$ has finite dimension as a vector space over $k\laurentseries{\pi}$;  we call this dimension the {\em height} of $\V$.  For instance, if $\V$ is a one-dimensional formal $K$-vector space corresponding to a continuous homomorphism $K\to k\laurentseries{\tau}$, then the image of $\pi$ in $k\laurentseries{\tau}$, being topologically nilpotent, must have the form $a_n\tau^n+a_{n+1}\tau^{n+1}$ for some $n\geq 1$, $a_n\neq 0$.  Then $n$ is the height of $\V$.  

\begin{defn}  Let $k$ be a perfect field containing $\FF_q$.   A {\em $K$-isocrystal} over $k$ is a finite-dimensional vector space $M$ over $K\hat{\otimes}_{\FF_q} k = k\laurentseries{\pi}$ together with a $1\otimes\tau$-semilinear automorphism $F\from M\to M$.  A $K$-isocrystal $M$ is {\em nilpotent} if $F$ is topologically nilpotent.
\end{defn}

Thus for a formal $K$-vector space $\V$, the coordinate module $M(\V)$ is a nilpotent $K$-isocrystal.
%For instance, let $\V_{\LT}$ denote the formal $K$-vector space whose underlying formal $\FF_q$-vector space is $\bG_{a,k}^{\flat}$ and for which the action of $\pi$ is through the $q$th power Frobenius $\tau$.  Then $M_{\LT}=M(\V_{\LT})$ is a $k\laurentseries{\pi}$-vector space of dimension 1, admitting a basis vector $e$ for which $F(e)=\pi e$.

From Prop.~\ref{antiequiv} we easily deduce the following proposition.
\begin{prop}  \label{antiequivK}  The functor $\V\mapsto M(\V)$ is an anti-equivalence from the category of formal $K$-vector spaces over $k$ to the category of nilpotent $K$-isocrystals over $k$.
\end{prop}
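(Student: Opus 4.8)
The plan is to deduce Proposition~\ref{antiequivK} from Proposition~\ref{antiequiv} by tracking the extra $K$-structure through the anti-equivalence $\V\mapsto M(\V)$. First I would observe that a formal $K$-vector space over $k$ is, by definition, a formal $\FF_q$-vector space $\V$ together with a continuous $\FF_q$-algebra homomorphism $K\to\End_{\FF_q}\V$; applying Proposition~\ref{antiequiv}, this is the same datum as a free left $k\laurentseries{\tau}$-module $M=M(\V)$ of finite rank together with a continuous right action of $K$ commuting with the left $k\laurentseries{\tau}$-action (the right action appearing because $M\mapsto M(\V)$ is \emph{anti}-equivalence, so endomorphisms act on the opposite side). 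Since $K$ is commutative, giving such a right action is the same as giving a left action, so $M(\V)$ is a module over $K\hat\otimes_{\FF_q}k\laurentseries{\tau}$. Concretely the $K$-action is determined by the image of $\pi$, which (as the excerpt records) is an invertible topologically nilpotent element of $k\laurentseries{\tau}$.

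Next I would unwind what this module structure amounts to. The ring $K\hat\otimes_{\FF_q}k\laurentseries{\tau}$ has $K\hat\otimes_{\FF_q}k = k\laurentseries{\pi}$ as a (central, since $\pi$ is) subring, and it is topologically generated over $k\laurentseries{\pi}$ by $\tau$, subject to $\tau\alpha=\alpha^q\tau$ for $\alpha\in k$ and $\tau\pi=\pi\tau$. Thus a module over it is precisely a $k\laurentseries{\pi}$-vector space $M$ equipped with a $(1\otimes\tau)$-semilinear endomorphism $F$ (the action of $\tau$). Finiteness: $M$ is free of finite rank $d$ over $k\laurentseries{\tau}$, and I claim it is finite-dimensional over $k\laurentseries{\pi}$; this is the observation already made in the excerpt (``it is easy to see''), which one proves by noting that $\pi$ maps to an element of the form $a_n\tau^n+\cdots$ with $a_n\neq0$, $n\ge1$, so that $k\laurentseries{\tau}$ is finite (of rank $n$, the height) over the subring $k\laurentseries{\pi}$ via this embedding, whence $M$ has $k\laurentseries{\pi}$-dimension $nd<\infty$. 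Topological nilpotence of $F$ on $M$ follows because $\tau$ is topologically nilpotent as an operator on $\bG_{a,k}^{\flat}(R)$ for every $R$ — equivalently, because $M$ is $\tau$-adically separated and complete as a $k\laurentseries{\tau}$-module — so $F$ is topologically nilpotent as an operator on $M$, i.e.\ $M(\V)$ is a nilpotent $K$-isocrystal.

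For the inverse functor I would start from a nilpotent $K$-isocrystal $M$ over $k$: a finite-dimensional $k\laurentseries{\pi}$-vector space with a topologically nilpotent $(1\otimes\tau)$-semilinear automorphism $F$. Forgetting $F$ being an \emph{automorphism} but remembering it is topologically nilpotent, I view $M$ as a module over $k\laurentseries{\tau}$ (with $\tau$ acting by $F$); I must check it is free of finite rank. It is finitely generated because $M$ is finite-dimensional over $k\laurentseries{\pi}$ and $k\laurentseries{\pi}\hookrightarrow k\laurentseries{\tau}$ makes $k\laurentseries{\tau}$ a finite module over $k\laurentseries{\pi}$ once we know $\pi$ acts invertibly — but wait, here the subtlety is the opposite direction: I need $k\laurentseries{\tau}$ to act on $M$, and $\pi=F^{-n}\cdot(\text{unit})$ already acts, so $M$ is a module over the subring of $k\laurentseries{\tau}$ generated by the image of $\pi$; since $M$ is finitely generated over that and the extension is finite, $M$ is finitely generated over $k\laurentseries{\tau}$. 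Freeness then follows because $k\laurentseries{\tau}$ is a (noncommutative) principal ideal domain — it is a left and right PID, being a twisted Laurent series ring over a field — and a finitely generated torsion-free left module over a PID is free; torsion-freeness holds because $M$ is torsion-free over the subring generated by $\pi$ (it's a $k\laurentseries{\pi}$-vector space) hence over $k\laurentseries{\tau}$. Applying Proposition~\ref{antiequiv} to this free $k\laurentseries{\tau}$-module recovers a formal $\FF_q$-vector space, and the $\pi$-action (topologically nilpotent invertible element of $k\laurentseries{\tau}$) equips it with a formal $K$-vector space structure. Finally I would note these two constructions are mutually inverse and functorial, essentially because they are built from the anti-equivalence of Proposition~\ref{antiequiv} by adding and then reading off the same $\pi$-datum.

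The main obstacle I anticipate is the freeness claim in the inverse direction — more precisely, verifying cleanly that $k\laurentseries{\tau}$ is a (two-sided) principal ideal domain and that the relevant module is torsion-free and finitely generated over it, and that the topological bookkeeping (inverse-limit topology on $\V(R)$ versus $\tau$-adic/$\pi$-adic topology on $M$) matches up so that ``continuous homomorphism'' in Proposition~\ref{antiequiv} corresponds exactly to the finite-dimensionality and topological nilpotence packaged in the definition of nilpotent $K$-isocrystal. Once those structural facts about $k\laurentseries{\tau}$ are in hand, the rest is a matter of transporting structure through Proposition~\ref{antiequiv}, and I would keep that part brief.
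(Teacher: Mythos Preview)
Your approach—transporting the $K$-structure through the anti-equivalence of Proposition~\ref{antiequiv}—is natural and is in the same spirit as the paper's (which also frames the result as being ``easily deduce[d]'' from Proposition~\ref{antiequiv}), but the paper executes it more directly: rather than arguing abstractly that a nilpotent $K$-isocrystal $M$ is a free $k\laurentseries{\tau}$-module of finite rank and then invoking Proposition~\ref{antiequiv}, it simply writes down the inverse functor as $\V(R)=\Hom_{F,\tau}(M,\Nil(R))$, the $K$-vector space of continuous $\FF_q$-linear maps $\lambda\colon M\to\Nil(R)$ satisfying $\lambda(Fm)=\lambda(m)^q$. This is the inverse functor from Proposition~\ref{antiequiv} with the $K$-action carried along, and it sidesteps the module-theoretic bookkeeping you are worried about.

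That bookkeeping is where your plan has a genuine gap. You correctly flag the finite-rank claim as the main obstacle, but the argument you sketch is circular: you write ``$\pi=F^{-n}\cdot(\text{unit})$'' and speak of ``the image of $\pi$'' in $k\laurentseries{\tau}$, yet for an \emph{abstract} nilpotent $K$-isocrystal there is no a priori relation expressing $\pi$ in terms of $F$. The action of $\pi$ on $M$ is a $k\laurentseries{\tau}$-linear endomorphism of $M$, not an element of $k\laurentseries{\tau}$; the formula $\pi=a_n\tau^n+\cdots$ you invoke belongs to the one-dimensional forward direction and cannot be assumed here. (By contrast, freeness is easier than you suggest: for perfect $k$ the ring $k\laurentseries{\tau}$ is a \emph{division ring}, not merely a PID, so every module over it is automatically free.) To salvage your route you would need an independent argument that $M$ has finite $k\laurentseries{\tau}$-rank and that the $\tau$-adic and $\pi$-adic topologies on $M$ coincide—e.g.\ by producing an $F$-stable $k\powerseries{\pi}$-lattice, or via a slope argument. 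The paper's explicit inverse functor avoids this entirely.
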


\begin{proof} Once again, we indicate the inverse functor.  If $M$ is a nilpotent $K$-isocrystal, define a functor $\V\from \Alg_k\to \TopVect_K$ by
\[ \V(R) = \Hom_{F,\tau}(M,\Nil(R)),\]
meaning the $K$-vector space of continuous $\FF_q$-linear maps $\lambda\from M\to \Nil(R)$ satisfying $\lambda(F(m))=\lambda(m)^q$, for $m\in M$.  Then $\V$ is the formal $K$-vector space corresponding to $M$.
\end{proof}

Given a $K$-isocrystal $M$, let $M^*$ be the space of continuous $\FF_q$-linear maps $\lambda\from M\to \FF_q$.  Then $M^*$ becomes a $k\laurentseries{\pi}$-vector space through $f\lambda(m)=\lambda(fm)$, for $f\in k\laurentseries{\pi}$, $\lambda\in M^*$, $m\in M$.  Also define a $1\otimes\tau$-linear map $F\from M^*\to M^*$ via $F\lambda(m)=\lambda(F^{-1}(m))$.   Then $M^*$ is also an isocrystal;  in particular it is a finite-dimensional $k\laurentseries{\pi}$-vector space.
Note that $F$ is topologically nilpotent on $M$ if and only if $F^{-1}$ is topologically nilpotent on $M^*$.
%It is clear that the formation of the dual isocrystal commutes with the formation of exterior products.

\begin{lemma} \label{Mstar}  For a formal $K$-vector space $\V$ over $k$ with associated $K$-isocrystal $M$, we have a natural isomorphism of topological $K$-vector spaces $\V(R)\to (M^*\hat{\otimes}
\Nil(R))^{F\otimes\tau}$ for every $R$.  Here $(M^*\hat{\otimes}\Nil(R))^{F\otimes\tau}$ is the space of vectors in $M^*\hat{\otimes}\Nil(R)$ which are invariant under $F\otimes\tau$.
\end{lemma}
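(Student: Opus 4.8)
The plan is to combine a tautological linear-algebra duality between $M$ and $M^*$ with a one-line Frobenius computation that matches the two semilinearity conditions.

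First I would unwind Proposition~\ref{antiequivK}: by definition $\V(R)=\Hom_{F,\tau}(M,\Nil(R))$ is the set of continuous $\FF_q$-linear maps $\lambda\from M\to\Nil(R)$ satisfying $\lambda(F(m))=\lambda(m)^q$ for all $m\in M$. Forgetting this last condition, there is a tautological identification of the space of \emph{all} continuous $\FF_q$-linear maps $M\to\Nil(R)$ with $M^*\hat{\otimes}\Nil(R)$: since $M$ is a complete linearly topologized $\FF_q$-vector space (it is isomorphic to $k\laurentseries{\tau}^{\oplus d}$) and $\Nil(R)$ is discrete, every continuous map factors through a quotient $M/U$ by an open subspace, and passing to the limit over such $U$ produces the isomorphism; it is a homeomorphism for the natural topologies, it is functorial in $R$, and it is $K$-linear, the action of $f\in K$ being $\lambda\mapsto(m\mapsto\lambda(fm))$ on the left and the evident action on the $M^*$-factor on the right. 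Explicitly, it sends $\lambda$ to the unique $\theta_\lambda\in M^*\hat{\otimes}\Nil(R)$ with $\langle\theta_\lambda,m\rangle=\lambda(m)$ for all $m$, where $\langle-,-\rangle\from(M^*\hat{\otimes}\Nil(R))\times M\to\Nil(R)$, $\langle\sum_a\mu_a\otimes n_a,\,m\rangle=\sum_a\mu_a(m)\,n_a$, is the evaluation pairing (a finite sum in $\Nil(R)$ for each fixed $m$, which is why it converges).

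The content of the lemma is that this identification carries $\V(R)$ exactly onto the $F\otimes\tau$-invariant vectors. For $\theta=\sum_a\mu_a\otimes n_a$ and $m\in M$ I would compute, using the description $F\mu=\mu\circ F^{-1}$ of the operator on $M^*$,
\[
\langle(F\otimes\tau)\theta,\,m\rangle=\sum_a(\mu_a\circ F^{-1})(m)\cdot n_a^q=\sum_a\mu_a(F^{-1}m)\cdot n_a^q .
\]
Each coefficient $\mu_a(F^{-1}m)$ lies in $\FF_q$, hence is fixed by the $q$-th power, so $\mu_a(F^{-1}m)\cdot n_a^q=(\mu_a(F^{-1}m)\cdot n_a)^q$; summing and using that the $q$-th power is additive in the $\FF_q$-algebra $R$ gives $\langle(F\otimes\tau)\theta,m\rangle=\langle\theta,F^{-1}m\rangle^q$. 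Taking $\theta=\theta_\lambda$ and using that the pairing is perfect, $(F\otimes\tau)\theta_\lambda=\theta_\lambda$ holds if and only if $\lambda(F^{-1}m)^q=\lambda(m)$ for all $m$, equivalently (replacing $m$ by $F(m)$) $\lambda(F(m))=\lambda(m)^q$ for all $m$, which is exactly the condition cutting out $\V(R)$. Hence $\lambda\mapsto\theta_\lambda$ restricts to a bijection $\V(R)\isom(M^*\hat{\otimes}\Nil(R))^{F\otimes\tau}$, and it is a $K$-linear homeomorphism and natural in $R$, being the restriction of such an isomorphism to closed subspaces cut out by continuous conditions.

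I expect the only delicate point to be the first step: pinning down the definition and topology of $M^*\hat{\otimes}\Nil(R)$, checking that the evaluation pairing converges and is perfect, and verifying that the duality isomorphism is a homeomorphism. Granting that, the identification of $\V(R)$ with the $F\otimes\tau$-fixed vectors is the short computation above, whose only inputs are that $\FF_q$-valued functionals are fixed by the $q$-th power map and that the $q$-th power is additive in characteristic $p$.
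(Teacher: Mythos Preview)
Your proposal is correct and follows exactly the approach of the paper: the paper's proof is the single sentence ``By the proof of Prop.~\ref{antiequivK}, we have $\V(R)=\Hom_{F,\tau}(M,\Nil(R))$, and this is the same as $(M^*\hat{\otimes}\Nil(R))^{F\otimes\tau}$,'' and you have supplied precisely the details behind that identification. The only difference is that the paper leaves the duality $\Hom_{\FF_q}^{\mathrm{cont}}(M,\Nil(R))\cong M^*\hat{\otimes}\Nil(R)$ and the matching of the two semilinearity conditions implicit, whereas you spell them out.
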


\begin{proof}  By the proof of Prop.~\ref{antiequivK}, we have $\V(R)=\Hom_{F,\tau}(M,\Nil(R))$, and this is the same as $(M^*\hat{\otimes}\Nil(R))^{F\otimes\tau}$.
\end{proof}

%The category of $K$-isocrystals over $k$ is an abelian $K$-linear category.  For a $K$-isocrystal $M$, we define the dual isocrystal $M^\vee$ to be the space of $k\laurentseries{\pi}$-linear maps $\ell\from M\to k\laurentseries{\pi}$

\subsection{Some multilinear algebra}
\label{multilinear}
Given two formal $K$-vector spaces $\V$ and $\mathcal{W}$ over $k$, a natural transformation of functors $\delta\from\V^n\to \mathcal{W}$ is {\em $K$-alternating} if for each $k$-algebra $R$, the map $\mu(R)\from \V(R)^n\to \mathcal{W}(R)$ is an alternating $K$-multilinear map.  Similarly, if $M$ and $N$ are two $K$-isocrystals, a map $f\from M^n\to N$ is a $K$-alternating map of isocrystals if it is $K$-alternating as a map between $K$-vector spaces and if it also satisfies $f(F(m_1),\dots,F(m_n))=F(f(m_1,\dots,m_n))$.

For a formal $K$-vector space $\V$ over $k$, let $M(\V)$ be the corresponding nilpotent $K$-isocrystal, and let $M^*(\V)=M(\V)^*$.
\begin{Theorem} \label{detexistence}
$K$-alternating maps $\V^n\to \mathcal{W}$ are in bijection with $K$-alternating maps of isocrystals $M^*(\V)^n\to M^*(\mathcal{W})$.
%There is a $K$-alternating map $\V^n\to \wedge V$ which is universal among $K$-alternating maps out of $\V^n$.
\end{Theorem}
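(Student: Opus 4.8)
The plan is to reduce everything to Lemma~\ref{Mstar}, which already identifies $\V(R)$ functorially with $(M^*(\V)\hat{\otimes}\Nil(R))^{F\otimes\tau}$, and to check that $K$-alternating maps are respected by this identification together with the anti-equivalence of Prop.~\ref{antiequivK}. First I would set $M=M(\V)$, $N=M(\mathcal{W})$, $M^*=M^*(\V)$, $N^*=M^*(\mathcal{W})$, and unwind what data each side consists of. On the geometric side, a $K$-alternating natural transformation $\delta\from\V^n\to\mathcal{W}$ is, for each $k$-algebra $R$, an alternating $K$-multilinear map $\delta(R)\from\V(R)^n\to\mathcal{W}(R)$, natural in $R$. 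On the isocrystal side, a $K$-alternating map $f\from (M^*)^n\to N^*$ is an alternating $K$-multilinear map of the underlying $k\laurentseries{\pi}$-vector spaces commuting with $F$, i.e. $f(F m_1,\dots,F m_n)=F\,f(m_1,\dots,m_n)$.

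The core of the argument is the following chain of identifications. Using Lemma~\ref{Mstar} we have $\V(R)=(M^*\hat{\otimes}\Nil(R))^{F\otimes\tau}$ and $\mathcal{W}(R)=(N^*\hat{\otimes}\Nil(R))^{F\otimes\tau}$. Given an $f\from (M^*)^n\to N^*$ as above, base change gives for each $R$ an alternating $K$-multilinear map
\[
f\hat{\otimes}\Nil(R)\from (M^*\hat{\otimes}\Nil(R))^n\to N^*\hat{\otimes}\Nil(R),
\]
continuous for the topologies in play, and since $f$ commutes with $F$ while the $\tau$ on the $\Nil(R)$ factors is common, this map carries $F\otimes\tau$-invariant tuples to $F\otimes\tau$-invariants; restricting, one obtains $\delta_f(R)\from\V(R)^n\to\mathcal{W}(R)$, manifestly natural in $R$ and alternating $K$-multilinear. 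This defines a map from $K$-alternating maps of isocrystals to $K$-alternating natural transformations. For the inverse, given $\delta\from\V^n\to\mathcal{W}$, I would recover $f$ by an adjunction/Yoneda-type argument: because $\bG_{a,k}^{\flat,\oplus d}$ pro-represents $\bG_{a,k}^{\flat,\oplus d}$ (tautologically) and because, by the proof of Prop.~\ref{antiequivK}, evaluation on the universal point reconstructs a morphism $\V\to\mathcal{W}$ from its effect on $R$-points, the $n$-linear $\delta$ similarly factors through a universal $K$-alternating map out of $\V^n$; dualizing via $M^*$ and Lemma~\ref{Mstar} yields the required $f$ on the level of isocrystals, with the $F$-equivariance forced by naturality of $\delta$ with respect to the Frobenius endomorphism $\tau$ of $k$-algebras. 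One then checks the two constructions are mutually inverse, which is a formal bookkeeping of the identifications.

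The step I expect to be the main obstacle is making precise the claim that a $K$-alternating $\delta$ is \emph{determined by and gives rise to} a map out of a \emph{single} universal object, i.e. setting up the correct notion of a representing object for $K$-alternating maps $\V^n\to(-)$, since $\V^n$ itself is not the right thing (its $R$-points carry all multilinear maps, not just alternating ones) and a naive "exterior power of formal $K$-vector spaces" is not obviously available. The clean way around this is to work entirely on the isocrystal side: one shows that $K$-alternating maps of isocrystals $(M^*)^n\to(-)$ are represented by $\textstyle\bigwedge^n_{k\laurentseries{\pi}} M^*$ equipped with the induced $F$ (topological nilpotence of $F$ on $M^*{}^{-1}$ is not needed for this formal statement, only finite dimensionality), and then transports the universal $K$-alternating map $(M^*)^n\to\bigwedge^n M^*$ through the already-established equivalences to produce a universal $K$-alternating natural transformation $\V^n\to\V_{\wedge}$, where $\V_{\wedge}$ is the formal $K$-vector space attached to $\bigl(\bigwedge^n M^*\bigr)^*$; both sides of the theorem are then $\Hom$ out of the same universal object, giving the bijection. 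A secondary technical point is the compatibility of $\hat{\otimes}$ with alternating multilinearity and with passage to $F\otimes\tau$-invariants, which is routine given that $\Nil(R)$ carries the discrete topology and $M^*$ is finite-dimensional over $k\laurentseries{\pi}$, so the completed tensor product is just the ordinary one on each finite level.
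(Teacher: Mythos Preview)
Your forward direction (from an isocrystal map $\mu$ to a geometric one $\delta_\mu$ via Lemma~\ref{Mstar} and base change) is exactly what the paper does. The gap is in the reverse direction. Your proposed ``clean way around'' is circular: you transport the universal alternating map $(M^*)^n\to\bigwedge^n M^*$ through the equivalence to obtain a candidate $\V^n\to\V_\wedge$, and then assert that both sides of the theorem become $\Hom$ out of the same universal object. But asserting that $\V^n\to\V_\wedge$ is universal among \emph{geometric} $K$-alternating maps is precisely the surjectivity of the correspondence you are trying to establish. Prop.~\ref{antiequivK} only handles \emph{linear} morphisms between pro-representable objects; it gives you no mechanism for showing that an arbitrary $K$-alternating natural transformation $\lambda\from\V^n\to\mathcal{W}$ factors through any formal $K$-vector space, because such a $\lambda$ is not a morphism out of a pro-representable functor in the first place. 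Your earlier Yoneda-style sentence (``evaluation on the universal point reconstructs a morphism\dots the $n$-linear $\delta$ similarly factors\dots'') is exactly the step that fails without further input.

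The paper supplies that input by an explicit construction. Given $\lambda\from\V^n\to\mathcal{W}$ and $e_1,\dots,e_n\in M^*(\V)$, one evaluates $\lambda$ on the specific test algebra
\[
R=k\bigl[x_1^{1/q^\infty},\dots,x_n^{1/q^\infty}\bigr]\big/(x_1^q,\dots,x_n^q)
\]
at the specific elements $\alpha_j=\sum_{i\geq 0}F^{-i}(e_j)\otimes x_j^{q^{-i}}\in (M^*(\V)\hat{\otimes}\Nil(R))^{F\otimes\tau}$ (convergent because $F^{-1}$ is topologically nilpotent on $M^*(\V)$), and then applies the $k$-linear projection $\pr\from R\to k$ onto the coefficient of $x_1\cdots x_n$ to define $\mu(e_1,\dots,e_n)=(1\otimes\pr)\bigl(\lambda(\alpha_1,\dots,\alpha_n)\bigr)\in M^*(\mathcal{W})$. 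This coefficient-extraction trick is the missing idea: it is what converts a natural transformation back into an isocrystal datum, and the mutual inverse check is then a direct computation. Your outline does not contain anything playing this role.
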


\begin{proof}
Given a $K$-alternating map of isocrystals $\mu\from M^*(\V)^n\to M^*(\mathcal{W})$, one gets an obvious $K$-alternating map
\begin{equation}
\label{lambdaR}
\lambda_R\from\left((M^*(\V)\hat{\otimes}\Nil(R))^{F\otimes\tau}\right)^n \to (M^*(\mathcal{W})\hat{\otimes}\Nil(R))^{F\otimes\tau}
\end{equation}
which is natural in $R$;  thus by Lemma~\ref{Mstar} we have a $K$-alternating map $\lambda\from \V^n\to W$.

Conversely, suppose a $K$-alternating map $\lambda\from \V^n\to \mathcal{W}$ is given.  To construct an alternating map $M^*(\V)^n\to M^*(\mathcal{W})$, suppose we are given elements $e_1,\dots,e_n\in M^*(\V)$.  Let $R=k[x_1^{1/q^\infty},\dots,x_n^{1/q^\infty}]/(x_1^q,\dots,x_n^q)$.  Let $\pr\from R\to k$ be the projection onto the coefficient of $x_1\cdots x_n$.  For $j=1,\dots,n$, let $\alpha_j=\sum_{i\geq 0} F^{-i}(e_j)\otimes x_j^{q^{-i}}$.  Note that the sum is convergent because $F^{-1}$ is topologically nilpotent on $M^*(\V)$.  Clearly $\alpha_j$ is invariant under $F\otimes \tau$, so that $\alpha_j$ may be identified with an element of $\mathcal{V}(R)$ (Lemma~\ref{Mstar}).  We get the element $\lambda(\alpha_1,\dots,\alpha_n)\in \mathcal{W}(R)\subset M^*(\mathcal{W})\hat{\otimes} \Nil(R)$.  Let
\[ \mu(e_1,\dots,e_n)=(1\otimes\pr)\left(\lambda(\alpha_1,\dots,\alpha_n)\right)\in M^*(\mathcal{W}). \]
It is not hard to see that $\mu\from M^*(\V)^n\to M^*(\mathcal{W})$ is a $K$-alternating map of isocrystals, and that $\lambda\mapsto \mu$ is a double-sided inverse to the map $\mu\mapsto \lambda$ of the previous paragraph.
%Given a $k$-algebra $R$, we may identify $\V(R)$ with the $K$-vector space of continuous $\FF_q$-linear maps $v\from M\to \Nil(R)$ which satisfy $v(F(m))=v(m)^q$, for $m\in M$.  If $m\in\M$ and $v\in\V(R)$, continuity of $e$ ensures that $v(\pi^Nm)=0$ for $N\gg 0$. Similarly, $\wedge V(R)$ is identified with the space of continuous $\FF_q$-linear maps $\wedge M\to \Nil(R)$ satisfying the same relation.   Define a map $\delta_R\from \V(R)^n\to \wedge V(R)$ by the series
%\[ \delta_R(v_1,\dots,v_n)(m_1\wedge\dots\wedge m_n) = \sum_{(a_1,\dots,a_n)} \det\left( m_i(\pi^{a_j}v_j) \right)_{1\leq i,j\leq n}, \]
%where $(a_1,\dots,a_n)$ ranges through $n$-tuples of integers with $\sum_j a_j = 0$.  Note that all but finitely many terms in the above sum are 0.  It is easy to see that this series is $K$-alternating and functorial in $R$, so it arises from a $K$-alternating map $\delta\from \V^n\to \wedge\V$.
%\begin{pmatrix}
%m_1(\pi^{a_1}v_1) & m_2(\pi^{a_1}v_1)& \dots & m_n(\pi^{a_n}v_n) \\
%\vdtos & \vdots
%Now suppose $\delta'\from \V^n\to \mathcal{W}$ is $K$-alternating.
\end{proof}

It is a simple matter to see that exterior powers exist in the category of $K$-isocrystals, and that every exterior power of a nilpotent $K$-isocrystal is also nilpotent. 
%For a $K$-isocrystal $M$, one can construct the exterior powers $\wedge^rM$ (with respect to the field of scalars $k\laurentseries{\pi}$) for $r=1,2,\dots$.  The semilinear map $F\from M\to M$ induces a map $\wedge^rM\to\wedge^rM$ which gives $\wedge^rM$ the structure of a $K$-isoc
If $\V$ is a formal $K$-vector space of height $n$, let $\wedge \V$ be the formal $K$-vector space associated to the top exterior power $\wedge^n M(\V)$.  Then $\wedge \V$ has height one. Thm.~\ref{detexistence} shows there is a $K$-alternating map $\delta\from \V^n\to \wedge\V$ which is universal among all $K$-alternating maps out of $\V^n$.

\subsection{Formal vector spaces over adic rings}
\label{FormalVSoverAdic}

Let $A$ be a complete local $\OO_K$-algebra with maximal ideal $I$, such that $k=A/I$ is a perfect field containing $\FF_q$.   Let $\V_0$ be an $n$-dimensional formal $K$-vector space over $k$.  We define a functor $\V\from \Nilp_{A}\to\TopVect_K$ by $\V(R)=\V_0(R/I)$.

\begin{lemma} \label{FormalVSadic}  $\V$ is pro-representable by a formal scheme which is isomorphic to $\Spf A\powerseries{X_1^{1/q^\infty},\dots,X_n^{1/q^\infty}}$.
\end{lemma}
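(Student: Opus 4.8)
The plan is to reduce the statement about $\V$ over the adic ring $A$ to the already-established pro-representability of formal $\FF_q$-vector spaces over the perfect residue field $k$, by unwinding the definition $\V(R) = \V_0(R/I)$ and checking that the naive ``base change'' formal scheme does the job. First I would recall that, by Proposition~\ref{antiequiv} (or directly from the discussion preceding Definition~\ref{Fqvs}), $\V_0$ over $k$ is isomorphic to $\bG_{a,k}^{\flat,\oplus n}$, hence pro-representable by $B_0 := k\powerseries{X_1^{1/q^\infty},\dots,X_n^{1/q^\infty}}$ with its ideal of definition $J_0 = (X_1^{1/q^\infty},\dots,X_n^{1/q^\infty})$, meaning $\V_0(S) = \varinjlim_m \Hom_{\Alg_k}(B_0/J_0^m, S)$ for every $k$-algebra $S$. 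The candidate representing object for $\V$ is then $B := A\powerseries{X_1^{1/q^\infty},\dots,X_n^{1/q^\infty}}$, by which I mean the $(I + (X_i^{1/q^\infty}))$-adic completion of $\varinjlim_\tau A[X_1,\dots,X_n]$; set $J := I B + (X_i^{1/q^\infty})$, which is an ideal of definition of $B$ containing $IB$. Note $B/IB \cong B_0$ and $J$ maps onto $J_0$, so $B/J^m$ is a $k$-algebra and the $K$-vector space object structure on $\bG_{a}^{\flat,\oplus n}$ transports to give $B$ the structure making $\Spf B$ a $K$-vector space object over $\Spf A$.

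The heart of the argument is the chain of natural identifications, for $R \in \Nilp_A$:
\[
\V(R) = \V_0(R/I) = \varinjlim_m \Hom_{\Alg_k}(B_0/J_0^m,\, R/I).
\]
Since $R \in \Nilp_A$ means $I$ is nilpotent in $R$, the ring $R/I$ is an honest $A/I = k$-algebra, and I must compare $\Hom_{\Alg_k}(B_0/J_0^m, R/I)$ with $\Hom_{\Nilp_A}(B/J^N, R)$ for suitable $N$. The key point: a continuous $A$-algebra map $B/J^N \to R$ is determined by the images of the $X_i^{1/q^j}$, which are elements of the ideal $(J/J^N)$, hence nilpotent in $R$; conversely, an $A$-algebra map $B \to R$ sending each $X_i^{1/q^j}$ into $\Nil(R)$ factors through some $B/J^N$ because $R$ is $I$-adically trivial ($I$ nilpotent) and each $X_i^{1/q^j} \mapsto$ nilpotent. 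Any such map, postcomposed with $R \to R/I$, kills $I$ and therefore factors through $B/IB = B_0$, landing in $\varinjlim \Hom_{\Alg_k}(B_0/J_0^m, R/I) = \V_0(R/I)$. For the reverse direction, I would use that $B = \varinjlim_\tau A[X_1,\dots,X_n]^\wedge$ is, as an $A$-algebra, ``freely generated'' by a compatible system of $q$-power roots $X_i^{1/q^j}$ subject only to topological nilpotence, so a set map from these generators into $\Nil(R)$ lifting a given $k$-point of $B_0$ always exists (lift the residues arbitrarily — any lift to $\Nil(R)$ works since there are no relations), giving a well-defined map $\V_0(R/I) \to \varinjlim \Hom_{\Nilp_A}(B/J^N, R)$. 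Checking these two constructions are mutually inverse and natural in $R$, and compatible with the $K$-vector space structure, is then routine.

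The main obstacle I anticipate is bookkeeping with the two competing topologies and the passage to the colimit over Frobenius: one must be careful that the ``ring of fractional power series'' $B$ really is the pro-representing object in the sense defined in the notation section (i.e.\ that the colimit $\varinjlim_N \Hom(B/J^N, -)$ computes $\V$ on the nose, not just up to some completion issue), and that continuity of the algebra homomorphisms matches the condition that the images of the $X_i^{1/q^j}$ lie in $\Nil(R)$ — exactly as in the proof of Proposition~\ref{antiequiv}. Once the $I$-adic triviality of objects of $\Nilp_A$ is exploited to collapse everything to the residue field, there is essentially nothing left beyond the $k$-case, which is given. A clean way to phrase the whole thing is: $\Spf B$ represents $\Nilp_A \ni R \mapsto \V_0(R/I)$ because $\Spf B \to \Spf A$ is the (formal) base change of $\Spf B_0 \to \Spf k$ along $\Spf A \to \Spf k$, and formal affine schemes pulled back along a closed point are computed by exactly this formula; I would either cite this or spell it out in the two paragraphs above.
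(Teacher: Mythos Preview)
Your overall strategy of reducing to the residue field and lifting is right, but the crucial lifting step has a genuine error. You claim that to construct a map $\V_0(R/I) \to \varinjlim \Hom(B/J^N, R)$ one may ``lift the residues arbitrarily --- any lift to $\Nil(R)$ works since there are no relations.'' This is false: the generators $X_i^{1/q^j}$ of $B$ over $A$ satisfy $(X_i^{1/q^{j+1}})^q = X_i^{1/q^j}$, so an arbitrary family of lifts $y_{i,j} \in \Nil(R)$ of a compatible sequence $(\bar{y}_{i,j}) \in \varprojlim_\tau \Nil(R/I)$ will almost never satisfy $y_{i,j+1}^q = y_{i,j}$, and hence does not define a ring homomorphism $B \to R$. (You yourself describe $B$ as freely generated by a \emph{compatible} system of $q$-power roots, which is precisely a relation.) You also give no argument for injectivity of the reduction map, which is not automatic either.

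Both gaps are filled by the single observation you are missing: because $I$ is nilpotent in $R$, the reduction $\varprojlim_\tau \Nil(R) \to \varprojlim_\tau \Nil(R/I)$ is a bijection, via the Teichm\"uller lift. Given $(\bar{y}_j)$ with $\bar{y}_{j+1}^q = \bar{y}_j$, choose arbitrary preimages $z_j \in \Nil(R)$ and set $y_j = \lim_{m\to\infty} z_{j+m}^{q^m}$; this stabilizes (since $a \equiv b \pmod{I^r}$ implies $a^q \equiv b^q \pmod{I^{r+1}}$) and yields the unique compatible lift. This is exactly the paper's proof: it simply writes $\V(R) = \V_0(R/I) = \varprojlim_\tau \Nil(R/I)^n \cong \varprojlim_\tau \Nil(R)^n$ via Teichm\"uller, from which pro-representability by $\Spf A\powerseries{X_1^{1/q^\infty},\dots,X_n^{1/q^\infty}}$ is immediate. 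With the Teichm\"uller step inserted your framework becomes correct, but as written the lifting claim is the whole content of the lemma and is not justified.
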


\begin{proof}  After choosing coordinates on $\V_0$, we have $\V_0(R/I)=\varprojlim_\tau \Nil(R/I)^n$.  But this inverse limit is in functorial bijection with $\varprojlim_\tau\Nil(R)^n$, via the Teichm\"uller lift.  Thus $\V(R)=\varprojlim_\tau\Nil(R)^n$ as sets, so that $\V$ is pro-representable by $\Spf A\powerseries{X_1^{1/q^\infty},\dots,X_n^{1/q^\infty}}$ as claimed.
\end{proof}

Let us define a formal $K$-vector space over $A$ to be a functor $\V\from \Nilp_A\to\TopVect_K$ which is pro-representable by $\Spf A\powerseries{X_1^{1/q^\infty},\dots,X_n^{1/q^\infty}}$.  We have thus constructed a functor
$\V_0\to \V$ from formal $K$-vector spaces over $k$ to formal $K$-vector spaces over $A$.  We will refer to $\V$ as the lift of $\V_0$ to $A$.

If $\delta_0\from \V_0^n\to \wedge\V_0$ is the determinant map from \S\ref{multilinear}, then it is easy to see that $\delta_0$ lifts to a $K$-alternating map $\delta\from \V^n\to\wedge V$ of formal $K$-vector spaces over $A$, and that $\delta$ is the universal $K$-alternating map out of $\V^n$.

\section{Formal $\OO_K$-modules}

Let $A$ be an adic $\OO_K$-algebra.  A {\em formal $\OO_K$-module} $G$ over $A$ is an $\OO_K$-module object in the category of formal schemes over $\Spf A$.  It is required that $G$ be isomorphic\footnote{One ought to work with a more general notion of formal $\OO_K$-module, where the isomorphism $G\approx \Spf A\powerseries{X_1,\dots,X_d}$ is only required to exist locally on $\Spf A$.  As $\Spf A$ will be a single point for all the rings relevant to this paper, we shall not worry about this issue.} to $\Spf A\powerseries{X_1,\dots,X_d}$ for some $d$ (the dimension of $G$), and that the action of $\OO_K$ on the Lie algebra of $G$ agree with the structural homomorphism $\OO_K\to A$.   Once a choice of coordinates $G\approx \Spf A\powerseries{X_1,\dots,X_d}$ is made, this definition agrees with the usual definition in terms of power series.

%Let $A$ be an $\OO_K$-algebra.  A {\em formal $\OO_K$-module} $G$ over $A$ is an $\OO_K$-module object in the category of formal schemes over $\Spec A$.  It is required that $G$ be isomorphic\footnote{One ought to work with a more general notion of formal $\OO_K$-module, where the isomorphism $G\approx \Spf A\powerseries{X_1,\dots,X_d}$ is only required to exist locally on $\Spec A$.  As many of our rings $A$ will be local, we will not need to worry about this point.} to $\Spf A\powerseries{X_1,\dots,X_d}$ for some $d$ (the dimension of $G$), and that the action of $\OO_K$ on the Lie algebra of $G$ agree with the structural homomorphism $\OO_K\to A$.   Once a choice of coordinates $G\approx \Spf A\powerseries{X_1,\dots,X_d}$ is made, this definition agrees with the usual definition in terms of power series.

A formal $\OO_K$-module $G$ over $A$ pro-represents a functor $\Nilp_A\to \Mod_{\OO_K}$, which we write as $R\mapsto G(R)$.  Once a choice of coordinates on $G$ is made, the underlying functor $G\from \Alg_A\to \Sets$ is identified with $R\mapsto \Nil(R)^d$.  Since $p=0$ in $\OO_K$, there is no loss of generality in assuming that the underlying $\FF_q$-vector space of $G$ is the additive group.  We will always make this assumption.

\subsection{The universal cover}
\label{univcover}

\begin{defn}  Given a formal $\OO_K$-module $G$ over an adic $\OO_K$-algebra $A$, the {\em universal cover} of $G$ is the functor $\tilde{G}\from\Nilp_A\to\TopVect_K$ defined by
\[ \tilde{G}(R)=\varprojlim_\pi G(R) \]
\end{defn}

Note that $\tilde{G}$ is pro-representable by a formal scheme over $\Spf A$.  Indeed,
$G$ is pro-representable by a formal scheme isomorphic to $\Spf A\powerseries{X_1,\dots,X_n}$.  Thus $\tilde{G}$ is representable by the formal spectrum of $\varinjlim A\powerseries{X_1,\dots,X_n}$, where the limit is taken along the endomorphisms of $A\powerseries{X_1,\dots,X_n}$ corresponding to multiplication by $\pi$ in $G$.

\begin{prop} \label{IsKvs} Let $G$ be a $\pi$-divisible formal $\OO_K$-module of height $n$ over a perfect field $k$ containing the residue field $\FF_q$ of $\OO_K$.  Then $\tilde{G}$ is a formal $K$-vector space over $k$ of height $n$.
\end{prop}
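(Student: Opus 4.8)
The plan is to compute, in explicit coordinates, the formal scheme that pro-represents $\tilde G$, and to recognize it as the object of Definition~\ref{Fqvs}. Fix coordinates $G\cong\Spf k\powerseries{X_1,\dots,X_d}$ (where $d=\dim G$) in which the underlying formal group is $\Ga^d$ with its standard $\FF_q$-vector space structure, as permitted by the standing convention on formal $\OO_K$-modules. Then $[\pi]$ is given by a matrix $F=\sum_{j\ge 0}F_j\tau^j\in M_d(k\powerseries{\tau})$ of noncommutative power series; since $\pi$ acts as $0$ on $\Lie G$ we have $F_0=0$, and since $k$ is perfect the identity $cX_b^{q^j}=(c^{1/q}X_b^{q^{j-1}})^q$ (valid for $j\ge 1$) shows that each coordinate $F_a(X)$ of $[\pi]$ is a $q$-th power in $k\powerseries{X_1,\dots,X_d}$.

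By \S\ref{univcover}, $\tilde G$ is pro-represented by the formal spectrum of $\hat A_\infty$, the completion of $A_\infty=\varinjlim\big(k\powerseries{X_1,\dots,X_d}\xrightarrow{[\pi]^*}k\powerseries{X_1,\dots,X_d}\to\cdots\big)$. By the previous paragraph every transition map has image in $q$-th powers, so Frobenius is surjective on $A_\infty$, hence on $\hat A_\infty$; as $A_\infty$ is a domain, $\hat A_\infty$ is a perfect complete local ring. Writing $R_0=k\powerseries{X_1,\dots,X_d}$ for the level-$0$ copy inside $\hat A_\infty$, we get $\widehat{R_0^{1/q^\infty}}\subseteq\hat A_\infty$. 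For the reverse inclusion, the level-$m$ variables satisfy $[\pi]^m(X^{(m)})=X^{(0)}$, so $X^{(m)}=[\pi]^{-m}(X^{(0)})$; since $G$ is $\pi$-divisible, $[\pi]$ is an isogeny, so $[\pi]^{-1}$, and hence $[\pi]^{-m}$, involves only finitely many negative powers of $\tau$ and therefore carries $R_0$ into $\widehat{R_0^{1/q^\infty}}$, whence $X^{(m)}\in\widehat{R_0^{1/q^\infty}}$. Thus $\hat A_\infty=\widehat{R_0^{1/q^\infty}}=k\powerseries{X_1^{1/q^\infty},\dots,X_d^{1/q^\infty}}$. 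Finally, $\tilde G\to G$ is a homomorphism, so $R_0\to\hat A_\infty$ is a map of Hopf algebras for the standard additive structures; using $(a+b)^q=a^q+b^q$ one checks that the comultiplication and the $\FF_q$-coaction extend to the $q$-power roots in the standard way, so that $\tilde G\approx\bG_{a,k}^{\flat,\oplus d}$ as formal $\FF_q$-vector spaces. Combined with the $\OO_K$-action inherited from $G$, in which $\pi$ acts invertibly on the inverse limit, this exhibits $\tilde G$ as a formal $K$-vector space over $k$.

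It remains to identify the height. Unwinding $\tilde G=\varprojlim_\pi G$ realizes the coordinate module $M(\tilde G)=\Hom(\tilde G,\bG_{a,k}^\flat)$ as the free rank-$d$ module $(k\laurentseries\tau)^d$ on which $\pi$ acts through the matrix $F=[\pi]\in M_d(k\powerseries{\tau})$; a short computation with the $\tau$-adic elementary divisors of $F$ then gives $\dim_{k\laurentseries\pi}M(\tilde G)=\dim_k\operatorname{coker}\big(F\from(k\powerseries{\tau})^d\to(k\powerseries{\tau})^d\big)$, which equals the height $n$ of $G$ by definition.

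The step I expect to be the main obstacle is the identification of $\hat A_\infty$: establishing that it is perfect — which uses precisely that $\pi$ kills $\Lie G$ together with the perfectness of $k$ — and then pinning the ring down exactly rather than merely up to abstract isomorphism, which is where $\pi$-divisibility enters, through the invertibility of $[\pi]$ over $k\laurentseries\tau$. The verification that the transported Hopf structure is the standard one, and the height computation, are routine by comparison.
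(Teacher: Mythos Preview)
Your argument is essentially correct but takes a genuinely different, and considerably more laborious, route than the paper's. The paper does not compute the coordinate ring $\hat A_\infty$ at all; instead it uses the single structural fact that over a perfect field a height-$n$ formal $\OO_K$-module has $[\pi]=F^n\circ u$ with $F\colon G\to G^{(q)}$ the Frobenius isogeny and $u$ an isomorphism. Conjugating by the iterates of $u$ gives an isomorphism $\varprojlim_\pi G\;\cong\;\varprojlim_{F^n} G^{(q^{-mn})}=\varprojlim_F G^{(q^{-m})}$, and after choosing coordinates the right-hand side is literally $\varprojlim_\tau\Nil(R)=\bG_{a,k}^{\flat}(R)$. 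The height drops out immediately because $\pi$ acts as $F^n$ up to a unit. No completions, no Hopf-algebra verification, no elementary-divisor computation.

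Your approach recovers the same thing by identifying $\hat A_\infty$ with $k\powerseries{X_1^{1/q^\infty},\dots,X_d^{1/q^\infty}}$. One point deserves more care: the sentence ``as $A_\infty$ is a domain, $\hat A_\infty$ is a perfect complete local ring'' skips a step. What you actually need is that $A_\infty$ is \emph{already} perfect (surjectivity of Frobenius because each transition map lands in $q$-th powers, injectivity because $A_\infty$ is reduced), together with the continuity of $\tau^{-1}$ for the topology you complete along; only then does perfection pass to $\hat A_\infty$, and only then is the identity $X^{(m)}=[\pi]^{-m}(X^{(0)})$ well-posed. Once that is in place your inversion argument using $[\pi]^{-1}\in M_d(k\laurentseries{\tau})$ is fine. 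Note that your observation ``$[\pi]$ is a $q$-th power'' is exactly the first step of the factorization $\pi=F^n u$; iterating it gives the full factorization, and using that from the outset, as the paper does, bypasses the completion subtleties and the separate Hopf-structure check entirely.
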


\begin{proof}  The isogeny $\pi\from G\to G$ factors as $\pi=F^nu$, where $F\from G\to G^{(q)}$ is the Frobenius isogeny and $u\from G^{(q^{-n})}\to G$ is an isomorphism.  For $m\geq 1$ let us write the composite $G^{(q^{-mn})}\to G^{(q^{-(m-1)n})} \to \dots \to G$ simply as $u^m$.  We have an isomorphism of functors $\Alg_k\to \TopVect_K$:
\begin{equation}
\label{Glimit}
\varprojlim_{\pi} G \to \varprojlim_{F^n} G^{(q^{-mn})}
\end{equation}
given by
\[ (x_1,x_2,\dots)\mapsto (u(x_1),u^2(x_2),\dots). \]
Therefore $\tilde{G}$ is isomorphic to $\varprojlim_{F} G^{(q^{-m})}$.  For a $k$-algebra $R$ we have $\tilde{G}(R)\isom \varprojlim_F G^{(q^{-m})}(R)\approx \varprojlim_F \Nil(R)$ as topological $\FF_q$-vector spaces.  The action of $\pi$ on $\tilde{G}(R)$ agrees with that of $F^n$ up to an invertible operator.  This means that $\tilde{G}$ is a formal $K$-vector space of height $n$.
\end{proof}

For a formal $\OO_K$-module $G$ over $A$, let $G[\pi^m]\from \Nilp_A\to \Mod_{\OO_K}$ be the functor defined by $G[\pi^m](R)=\set{x\in G(R)\vert \pi^mx=0}$.  Then $\varprojlim G[\pi^n](R)$ is a torsion-free $\OO_K$-module which admits an obvious injection into $\tilde{G}(R)=\varprojlim_\pi G(R)$.  Since $\tilde{G}(R)$ is a $K$-vector space, we have a map $\varprojlim G[\pi^n](R)\otimes_{\OO_K} K\to \tilde{G}(R)$.
\begin{lemma}
\label{Tatemod}  If $\pi$ is nilpotent in $R$, then
$\varprojlim G[\pi^n](R)\otimes_{\OO_K} K\to \tilde{G}$ is an isomorphism.
\end{lemma}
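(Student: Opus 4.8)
The plan is to establish injectivity and surjectivity separately; injectivity is essentially formal, and the real content is surjectivity.

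For injectivity: as already noted above, $T:=\varprojlim_n G[\pi^n](R)$ is torsion-free over $\OO_K$ and maps injectively to $\tilde G(R)$. Since $\tilde G(R)$ is already a $K$-vector space (multiplication by $\pi$ is the shift, which is invertible on $\varprojlim_\pi G(R)$) and $K$ is flat over $\OO_K$, applying $-\otimes_{\OO_K}K$ to the inclusion $T\hookrightarrow\tilde G(R)$ preserves injectivity and does not change the target. So everything reduces to showing the map is onto.

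For surjectivity I would first record an explicit estimate on the iterates of multiplication by $\pi$. Fix coordinates $G\approx\Spf A\powerseries{X_1,\dots,X_d}$. Because the underlying group of $G$ is assumed additive and $\ch K=p$, the endomorphism ``$\pi$'' of $G$, and hence each iterate $[\pi^j]$, is a $d$-tuple of $\FF_q$-linear power series $[\pi^j](\mathbf X)_\ell=\sum_{m}\sum_{i\ge 0}c^{(j)}_{i,\ell,m}X_m^{q^i}$, whose linear part ($i=0$) is the scalar matrix $\pi\cdot I_d$ coming from the action on $\Lie G$. The estimate I want is
\[ c^{(j)}_{i,\ell,m}\in\pi^{\max(j-i,0)}R, \]
proved by induction on $j$: the case $j=1$ is exactly the normalization of the Lie-algebra action, and the inductive step comes from substituting $[\pi^j]$ into $[\pi]$ and using that in characteristic $p$ the $q^a$-th power map is additive, carries a monomial $X^{q^i}$ to $X^{q^{i+a}}$, and raises coefficients to the $q^a$-th power; thus a coefficient in $\pi^{j-b}R$ contributes, through the degree-$q^a$ part of $[\pi]$ with $a\ge 1$, a coefficient of $X^{q^{a+b}}$ lying in $\pi^{q^a(j-b)}R\subseteq\pi^{(j+1)-(a+b)}R$, while the scalar part of $[\pi]$ merely multiplies by $\pi$. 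Granting this, suppose $\pi^N=0$ in $R$. Then for $j\ge N$ every $c^{(j)}_{i,\ell,m}$ with $j-i\ge N$ vanishes, so $[\pi^j](\mathbf X)$ consists only of monomials of total degree at least $q^{j-N+1}$; hence $[\pi^j](\mathbf z)=0$ for any $\mathbf z\in\Nil(R)^d$ as soon as $q^{j-N+1}$ exceeds $d$ times a common nilpotence bound for the coordinates of $\mathbf z$. Now for an arbitrary $(w_m)_{m\ge1}\in\tilde G(R)$ (so $w_m=[\pi]w_{m+1}$), I would apply this with $\mathbf z=w_2$, using $[\pi^j]w_1=[\pi^{j+1}]w_2$, to get $[\pi^j]w_1=0$ for $j\gg0$, i.e.\ $w_1\in G[\pi^j](R)$; then every $w_m$ lies in $G[\pi^\infty](R)$, and $\pi^{j-1}(w_m)$ is the image of the element of $T$ given by shifting the tail $(w_1,w_2,\dots)$ to positions $\ge j$ and filling positions $1\le m\le j-1$ with $[\pi^{j-m}]w_1$. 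Since multiplication by $\pi$ is invertible on $\tilde G(R)$, this exhibits $(w_m)$ in the image of $T\otimes_{\OO_K}K$, proving surjectivity.

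The step I expect to be the real obstacle is the divisibility estimate itself — the assertion that over a ring in which $\pi$ is nilpotent, iterating $[\pi]$ drives everything into arbitrarily high degree. This is precisely where positive characteristic is used: it is the additivity of the $q^a$-th power map that makes the induction run, and (as the introduction remarks) the mixed-characteristic analogue of this point requires more work. The remaining manipulations — the bookkeeping with inverse limits and the explicit form of the preimage — are routine.
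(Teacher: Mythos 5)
Your proof is correct and follows essentially the same route as the paper's: injectivity from the $\pi$-torsion-freeness of $\varprojlim G[\pi^n](R)$, and surjectivity from the fact that every element of $G(R)$ is $\pi^m$-torsion for some $m$ when $\pi$ is nilpotent in $R$. The paper simply asserts this last fact in one line, whereas you verify it via the explicit divisibility estimate on the coefficients of $[\pi^j]$; that estimate is correct and is exactly the content being taken for granted there.
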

\begin{proof} This follows from the fact that $\varprojlim G[\pi^n](R)$ does not have any $\pi$-torsion, and from the fact that every element in $G(R)$ is $\pi^m$-torsion for some $m$, because $\pi$ is nilpotent in $R$.
\end{proof}

%The lemma is certainly false when $\pi$ is not nilpotent in $R$.  If $G$ is a $\pi$-divisible formal group of height $n$ over $R$, where $R$ is the ring of integers in an algebraically closed field containing $K$, then $\varprojlim G[\pi^n](R)\otimes_{\OO_K} K$ is a $K$-vector space of dimension $n$, whereas $\tilde{G}(R)$ will be infinite-dimensional.

\begin{prop}\label{reductionmap}  Let $A$ be an adic $\OO_K$-algebra and let $I\subset A$ be an ideal of definition which contains $\pi$.
 Let $G$ be a formal $\OO_K$-module over $A$.  Consider the functor $\tilde{G}\from\Nilp_A\to \TopVect_K$.
 \begin{enumerate}
 \item For every object $R$ of $\Nilp_A$, the natural reduction map $\tilde{G}(R)\to \tilde{G}(R/I)$ is an isomorphism.
 \item If we further assume that $A/I$ is a perfect field, then $\tilde{G}$ is a formal $K$-vector space over $A$.
 %is (non-canonically) isomorphic to $\Spf A\powerseries{X_1^{1/q^\infty},\dots,X_d^{1/q^\infty}}$.
 \end{enumerate}
\end{prop}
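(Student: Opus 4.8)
The plan is to establish (1) by a direct computation with the formal group law over the nilpotent ring $R$, and then to deduce (2) by combining (1) with Proposition~\ref{IsKvs} and Lemma~\ref{FormalVSadic}. For (1), fix an object $R$ of $\Nilp_A$ and set $J=IR$, so that $J^N=0$ for some $N\ge 1$; note that $\tilde G(R/I)=\tilde G(R/J)$. Choose coordinates identifying $G(R)$ with $\Nil(R)^d$, under which the transition map of $\tilde G(R)=\varprojlim_\pi G(R)$ becomes the $d$-tuple of power series $[\pi]_G$ (which we recall is $\FF_q$-linear, the underlying group of $G$ being additive), whose linear part is multiplication by $\pi$ as required of a formal $\OO_K$-module. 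Because $\pi$ maps into $J$, and any monomial of degree $\ge 2$ carries $(J^m)^{\oplus d}$ into $(J^{2m})^{\oplus d}\subseteq (J^{m+1})^{\oplus d}$ for $m\ge1$, the map $[\pi]_G$ carries $(J^m)^{\oplus d}$ into $(J^{m+1})^{\oplus d}$; iterating, $[\pi^k]_G$ annihilates the subgroup $G(J):=\ker\!\bigl(G(R)\to G(R/J)\bigr)=J^{\oplus d}$ as soon as $k\ge N-1$. Injectivity of $\tilde G(R)\to\tilde G(R/J)$ is then immediate: if $(x_m)_{m\ge0}$ maps to $0$ then every $x_m$ lies in $G(J)$, so $x_m=[\pi^{N-1}]_G(x_{m+N-1})=0$. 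For surjectivity, take $(\bar x_m)_{m\ge0}\in\varprojlim_\pi G(R/J)$, lift each $\bar x_m$ to some $y_m\in G(R)$ (possible since $J$ is nilpotent, so $G(R)\to G(R/J)$ is surjective), and put $\delta_m:=[\pi]_G(y_{m+1})-y_m$; since $\delta_m$ reduces to $0$ modulo $J$ it lies in $G(J)$. Set $z_m:=[\pi^{N-1}]_G(y_{m+N-1})$. Then $z_m$ reduces to $[\pi^{N-1}]_G(\bar x_{m+N-1})=\bar x_m$, and
\[
[\pi]_G(z_{m+1})=[\pi^{N-1}]_G\bigl([\pi]_G(y_{m+N})\bigr)=[\pi^{N-1}]_G(y_{m+N-1})+[\pi^{N-1}]_G(\delta_{m+N-1})=z_m,
\]
using that $[\pi^{N-1}]_G$ is additive, commutes with $[\pi]_G$, and kills $\delta_{m+N-1}\in G(J)$. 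Hence $(z_m)$ is a preimage of $(\bar x_m)$, and the construction is plainly natural in $R$.

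For (2), Part (1) shows that the functor $\tilde G$ on $\Nilp_A$ satisfies $\tilde G(R)=\tilde G_0(R/I)$ functorially in $R$, where $G_0$ is the formal $\OO_K$-module over $k:=A/I$ obtained from $G$ by reduction modulo $I$. Since $A$ is separated and complete for the $I$-adic topology and $I$ is maximal (as $A/I$ is a field), $A$ is local with maximal ideal $I$ and perfect residue field $k$, so we are exactly in the situation of \S\ref{FormalVSoverAdic}. By Proposition~\ref{IsKvs}, $\tilde G_0$ is a formal $K$-vector space over $k$ of some height $n$; hence $R\mapsto\tilde G_0(R/I)$ is the lift of $\tilde G_0$ to $A$ considered there, which by Lemma~\ref{FormalVSadic} is pro-representable by $\Spf A\powerseries{X_1^{1/q^\infty},\dots,X_n^{1/q^\infty}}$. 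Therefore $\tilde G$ is a formal $K$-vector space over $A$.

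The only genuinely delicate point is the surjectivity step of Part (1): one must produce an element of the inverse limit, not merely lift each term, and this works precisely because $[\pi^{N-1}]_G$ kills $G(J)$, so the renormalized system $z_m=[\pi^{N-1}]_G(y_{m+N-1})$ is automatically $\pi$-compatible no matter how the auxiliary lifts $y_m$ were chosen. The remaining ingredients — surjectivity of $G(R)\to G(R/J)$, the $J$-adic estimate on $[\pi]_G$, and the compatibility of these functors with the base change $R\to R/I$ — are routine bookkeeping.
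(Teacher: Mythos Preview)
Your proof is correct and follows essentially the same route as the paper: both fix coordinates, observe that $[\pi]_G$ carries $(J^m)^{\oplus d}$ into $(J^{m+1})^{\oplus d}$ (hence $[\pi^{N-1}]_G$ kills $G(J)$), deduce injectivity immediately, and for surjectivity construct the Teichm\"uller-type lift $z_m=[\pi^{N-1}]_G(y_{m+N-1})$ (the paper phrases this as the sequence $y_m,\pi y_{m+1},\pi^2 y_{m+2},\dots$ stabilizing, which is the same thing). Part~(2) is handled identically in both, by reducing to Proposition~\ref{IsKvs} and Lemma~\ref{FormalVSadic}.
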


%There is a slight abuse of notation:  should write $A/I_A$ for $A/I$.

\begin{proof}  Choose a coordinate on $G$, so that the underlying $\FF_q$-vector space of $\tilde{G}(R)$ may be identified with $\Nil(R)^{\oplus d}$.  Let $I_R$ be the extension of $I$ to $R$, so that $I_R$ is nilpotent. If $(x_1,x_2,\dots)\in \tilde{G}(R)$ lies in the kernel of $\tilde{R}(A)\to\tilde{G}(R/I)$, then each $x_i$ lies in $I_R^{\oplus d}$.  But the power series giving mutliplication by $\pi$ in $G$ has $\pi\in I$ as its linear term, so it carries $(I_R^m)^{\oplus d}$ onto $(I_R^{m+1})^{\oplus d}$.  It follows quickly that each $x_i$ lies in $\bigcap_m (I_R^m)^{\oplus d}=0$.

We show the reduction map is surjective using the standard ``Teichm\"uller lift".  Suppose $(x_1,x_2,\dots)\in\tilde{G}(R/I)$.   Since $I$ is nilpotent in $R$, we may lift each $x_i$ to an element $y_i\in \tilde{G}(R)$.  These will satisfy $\pi y_i - y_{i-1}\in \ker(G(R)\to G(R/I))$.   It is easy to check that the sequence $y_i$, $\pi y_{i+1}$, $\pi^2 y_{i+2}$ must stabilize to an element $z_i\in G(R)$;  then $(z_1,z_2,\dots)\in \tilde{G}(R)$ lifts $(x_1,x_2,\dots)\in\tilde{G}(R/I)$.  This settles part (1).

Part (2) follows from part (1) together with the discussion in \S\ref{FormalVSoverAdic}:  we have that $\tilde{G}_0$ is a formal $K$-vector space over $k$, and part (1) shows that $\tilde{G}$ is its lift to $A$.
\end{proof}

The first part of the lemma shows that the functor $\tilde{G}\from \Nilp_A\to \TopVect_K$ only depends on $\tilde{G}\otimes_A A/I$, in a functorial sense.  That is, there is a functor
\begin{eqnarray*}
\set{\text{Formal $\OO_K$-modules over $A/I$}} &\to& \set{\text{Formal schemes over $\Spf A$}} \\
G_0 &\mapsto & \tilde{G},
\end{eqnarray*}
where $\tilde{G}$ pro-represents the functor $R\mapsto \tilde{G}_0(R/I)$ for any object $R$ of $\Nilp_A$.  Then if $G'$ is a lift of $G_0$ to a formal $\OO_K$-module over $R$, then we have a canonical isomorphism of functors $\tilde{G}'\isom\tilde{G}$.

%From now on we will only use the notation $G_0$ for formal $\OO_K$-modules over a perfect field $k$ containing the residue field $\FF_q$.  Thus $k$ becomes an $\OO_K$-algebra through the composition $\OO_K\to \FF_q\to k$.  Let $L$ be the valued field $K\hat{\otimes} k=k\laurentseries{\pi}$.  In this scenario, we define a functor $\tilde{G}\from \Nilp_{\OO_L}\to \TopVect_K$ by
% \[ \tilde{G}(R) = \tilde{G}_0(R/\pi). \]

%If one chooses a lift $G$ of $G_0$ to $\OO_L$, then its universal cover agrees with $\tilde{G}$ as we have just defined it.  Thus there is no conflict of notation:  $\tilde{G}$ is canonically isomorphic to the universal cover of {\em any} lift of $G_0$ to $\OO_L$.   Similarly, if $R$ is any $\pi$-adically complete $\OO_K$-algebra such that $R/\pi$ contains $k$, and $G_R$ is any lift of $G_0\otimes_k R/pi$ to $R$, then the universal cover of $G_R$ is canonically isomorphic to the restriction of $\tilde{G}$ to $\Nilp_R$.

\section{The Lubin-Tate deformation problem at infinite level}

\subsection{Some local deformation rings}

We recall the setup:  $K=\FF_q\laurentseries{\pi}$, $k$ is a perfect field containing $\FF_q$, and $L=K\hat{\otimes}k=k\laurentseries{\pi}$.  Let $G_0$ be a formal $\OO_K$-module over $k$ of dimension 1 and height $n$.

Let $\mathcal{C}$ be the category of complete local Noetherian $\OO_L$-algebras with residue field $k$.  The functor $\mathcal{C}\to\Sets$ which assigns to $R$ the set of deformations of $G_0$ to $R$ is representable by a ring $A_0$ which is non-canonically isomorphic to $\OO_L\powerseries{u_1,\dots,u_{n-1}}$.  Here a {\em deformation} of $G_0$ to $R$ is a pair $(G,\iota)$, where $G$ is a formal $\OO_K$-module and $\iota\from G_0\to G\otimes_R k$ is an isomorphism.

Let $G$ be a deformation of $G_0$ to $R$.  For $m\geq 1$, we have the notion of a Drinfeld level structure for $G[\pi^m]$ over $R$.  In the case that $G$ is one-dimensional, we may choose a coordinate $X$ on $G$;  then for
$\alpha\in\OO_K$ we write $[\alpha]_G(X)=\alpha X + \dots\in R\powerseries{X}$ for the power series determining multiplication by $\alpha$ in $G$.   A {\em Drinfeld level $\pi^m$ structure} on $G$ is an $\OO_K$-module homomorphism $\phi\from (\OO_K/\pi^m)^n\to G[\pi^m](R)$ for which the product $\prod_{v\in (\pi^{m-1}\OO_K/\pi^m)^n} (T-\phi(v))$ is divisible by $[\pi]_G(T)$ in $R\powerseries{T}$.  The $n$-tuple of images of the standard basis elements under $\phi$ will be called a {\em Drinfeld basis} for $G[\pi^m]$ over $R$.

 The functor $\mathcal{C}\to\Sets$ which assigns to $R$ the set of deformations $G$ of $G_0$ to $R$ equipped with a Drinfeld $\pi^m$ level structure is representable by a local $\OO_K$-algebra $A_m$.  If $G^{\univ}$ is the universal deformation over $A_0$, write $X_1^{(m)},\dots,X_n^{(m)}\in A_m$ for the universal Drinfeld basis for $G^{\univ}[\pi^m]$ over $A_m$ (but note that this requires choosing a coordinate on $G^{\univ}$).  Drinfeld shows that $A_m$ is a regular local ring, with parameters $X_1^{(m)},\dots,X_n^{(m)}$.  In our equal characteristic case, $A_m$ is a regular local ring containing its residue field $k$, so that $A_m$ is isomorphic to a power series ring over $k$ in $n$variables corresponding to the $X_i^{(m)}$.  We abuse notation slightly by writing:
\begin{equation}
\label{Ampowerseries}
A_m = k\powerseries{X_1^{(m)},\dots,X_n^{(m)}}
\end{equation}

Let $I$ be the maximal ideal of $A_1$, so that $I=\left(X_1^{(1)},\dots,X_n^{(1)}\right)$.    Considering $[\pi]_{G^{\univ}}(T)$ as a power series with coefficients in $A_1$, we have that
\begin{equation}
\label{piGmodI}
[\pi]_{G^{\univ}}(T)\equiv [\pi]_{G_0}(T)\pmod{I},
\end{equation}
because $G^{\univ}\otimes_{A_1}A_1/I = G_0$.

We will abuse notation by using the same symbol $I$ for the extension of this ideal to the rings $A_m$.  The following proposition computes this extension for all $m$.  Before stating it, we introduce a bit of notation.  Whenever $J$ is an ideal in an $\FF_q$-algebra, we write $J^{[q^m]}$ for the ideal generated by $q^m$th powers of elements of $J$.

\begin{prop}
\label{Iext}
Let $m\geq 1$.  We have
\[ A_m/I = k\left[ X_1^{(m)}, \dots, X_n^{(m)} \right]/\left(X_1^{(m)},\dots,X_n^{(m)}\right)^{[q^{(m-1)n}]}.\]
\end{prop}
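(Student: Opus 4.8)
The plan is to induct on $m$, the base case $m=1$ being the definition of $I$ (indeed $A_1/I = k$, and the right-hand side for $m=1$ is $k[X_i^{(1)}]/(X_i^{(1)}) = k$ since $(m-1)n = 0$). For the inductive step, the key is to understand the transition map $A_{m-1}\to A_m$ in coordinates. A Drinfeld $\pi^m$ level structure on $G^{\univ}$ determines a $\pi^{m-1}$ level structure by composing with multiplication by $\pi$; concretely, $X_i^{(m-1)}$ is the image of $X_i^{(m)}$ under $[\pi]_{G^{\univ}}(T)$, i.e. $X_i^{(m-1)} = [\pi]_{G^{\univ}}(X_i^{(m)})$ inside $A_m$. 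So first I would record this relation carefully, being attentive to the choice of coordinate on $G^{\univ}$.

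Next I would reduce modulo $I$. By Eq.~\eqref{piGmodI}, $[\pi]_{G^{\univ}}(T) \equiv [\pi]_{G_0}(T) \pmod I$. Since $G_0$ has height $n$ over the perfect field $k$, the isogeny $[\pi]_{G_0}$ factors as $[\pi]_{G_0} = F^n \circ u$ where $u$ is an automorphism (as in the proof of Prop.~\ref{IsKvs}); in coordinates this means $[\pi]_{G_0}(T)$ is, up to an invertible power series substitution, equal to $T^{q^n}$. Therefore, working in $A_m/I$, we have $X_i^{(m-1)} = [\pi]_{G_0}(X_i^{(m)})$, and modulo the image of the ideal $\left(X_j^{(m-1)}\right)$ — which by the inductive hypothesis is $\left(X_j^{(m)}\right)^{[q^{(m-2)n}]}$ pulled back through $[\pi]_{G_0}$ — one finds that the $X_i^{(m-1)}$ are $q^n$-th powers (up to units) of the $X_i^{(m)}$ modulo higher terms. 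The upshot I want to extract is: $A_m/I$ is generated over $k$ by $X_1^{(m)},\dots,X_n^{(m)}$, and the inductive hypothesis $\left(X_j^{(m-1)}\right)^{[q^{(m-2)n}]} = 0$ in $A_{m-1}/I$ translates, via $X_j^{(m-1)} \equiv (\text{unit})\cdot (X_j^{(m)})^{q^n} \pmod{(X_\bullet^{(m)})^{[q^n]}\text{-type terms}}$, into the relation $\left(X_j^{(m)}\right)^{[q^{(m-1)n}]} = 0$.

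The main obstacle is the bookkeeping of the error terms: $[\pi]_{G_0}(T)$ is not literally $T^{q^n}$ but $T^{q^n}$ plus higher-order terms (and possibly lower-degree terms that vanish because $G_0$ is $\pi$-divisible of height exactly $n$, forcing the linear-through-$(q^n-1)$-degree coefficients to vanish in $k$), so I must check that the substitution $T\mapsto [\pi]_{G_0}(T)$ sends the ideal $\left(T_1,\dots,T_n\right)^{[q^{(m-2)n}]}$ exactly onto $\left(X_\bullet^{(m)}\right)^{[q^{(m-1)n}]}$ in $A_m/I$, not just into it — i.e. that no collapsing or overshooting occurs. For the inclusion ``$\supseteq$'' I use that the leading term of $[\pi]_{G_0}$ is a $q^n$-th power; for ``$\subseteq$'' I use that $A_m/I$ is, as established, a quotient of $k[X_\bullet^{(m)}]$ and count dimensions or use that $[\pi]_{G_0}$ induces a faithfully flat-type map on the relevant truncated polynomial rings. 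Once this ideal-theoretic identity is pinned down, combined with the fact (from Drinfeld, Eq.~\eqref{Ampowerseries}) that $A_m = k\powerseries{X_\bullet^{(m)}}$ so that $A_m/I$ is a quotient of $k[X_\bullet^{(m)}]$ by a monomial ideal of the asserted shape, the proposition follows. I would also double-check the edge behavior at $m=1$ and $m=2$ by hand to make sure the exponent $q^{(m-1)n}$ is calibrated correctly.
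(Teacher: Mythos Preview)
Your inductive approach can be made to work, but it takes a genuinely different route from the paper. The paper argues directly via the functor of points: for a $k$-algebra $R$, a map $A_m\to R$ factors through $A_m/I$ precisely when $\pi=0$ in $R$, $G=G_0\otimes_k R$, and the images $x_i$ of the $X_i^{(m)}$ satisfy $[\pi^{m-1}]_{G_0}(x_i)=0$. The crucial observation is that in this situation the Drinfeld $\pi^m$-level condition is \emph{automatic} (since $[\pi]_{G_0}(T)$ divides $T^{q^n}$), so $\Spec A_m/I$ simply represents $n$-tuples in $G_0[\pi^{m-1}]$, and the result drops out from $[\pi]_{G_0}(T)$ being a unit times $T^{q^n}$. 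No induction, no dimension count.

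Your route cleanly yields one inclusion: the hypothesis $(X_j^{(m-1)})^{q^{(m-2)n}}=0$ in $A_{m-1}/I$, pushed forward to $A_m/I$ and combined with $X_j^{(m-1)}\equiv[\pi]_{G_0}(X_j^{(m)})\pmod I$ and the unit factorization of $[\pi]_{G_0}$, gives $(X_j^{(m)})^{q^{(m-1)n}}\in I$, hence a surjection from the asserted ring onto $A_m/I$. The reverse inclusion is where your sketch is thin. The phrase ``the substitution sends one Frobenius-power ideal exactly onto the other in $A_m/I$'' is muddled (in $A_m/I$ the target ideal is already zero by what you just proved), and a direct attack on $I\subseteq J$ runs into circularity: expanding $X_i^{(1)}=[\pi^{m-1}]_{G^{\univ}}(X_i^{(m)})$ drags in the coefficients $\pi,u_1,\dots\in I_0\subset A_m$, whose expression in the $X_\bullet^{(m)}$ is precisely what you are trying to control. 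Your dimension count does close the gap, but it needs $\dim_k A_m/I=\operatorname{rank}_{A_1}A_m=q^{n^2(m-1)}$, i.e.\ Drinfeld's finite-flatness of $A_m/A_1$ plus a generic-fiber count of level structures---legitimate, but heavier machinery than the paper's moduli argument, which sidesteps this entirely by recognizing that the Drinfeld condition becomes vacuous over $k$-algebras.
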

\begin{proof}
Let $R$ be an object of $\mathcal{C}$.  A continuous homomorphism $A_m\to R$ corresponds to a deformation $G$ of $G_0$ to $R$ together with an $n$-tuple $x_1,\dots,x_n\in R$ constituting a Drinfeld basis for $G[\pi^m](R)$.   We claim that this homomorphism factors through $A_m/I\to R$ if and only if $\pi=0$ in $R$, $G=G_0\otimes_k R$, and $[\pi^{m-1}]_{G_0}(x_i)=0$ for $i=1,\dots,n$.  (The last condition means that the $x_i$ actually lie in $G[\pi^{m-1}](R)$.) One direction is easy, since these conditions hold in the case of $R=A_m/I$ together with its universal deformation and level structure.  For the other direction, let $A_m\to R$ be the homomorphism carrying $G^{\univ}$ onto $G$ and $X_i^{(m)}$ onto $x_i$.  This homomorphism carries $X_i^{(1)}$ to $0$, hence it factors through $A_m/I$.

Now suppose $\pi=0$ in $R$, and $x_1,\dots,x_n$ are any elements of $G_0[\pi^{m-1}](R)$.  We claim that the tuple $x_1,\dots,x_n$ is a Drinfeld basis for $G_0[\pi^m](R)$.  This comes down to the claim that $T^{q^n}$ is divisible by $[\pi]_{G_0}(T)$ in $R\powerseries{T}$.  But this is certainly true because $G_0$ has height $n$.

Therefore $A_m/I$ has the following moduli interpretation:  for any local Noetherian $k$-algebra $R$, $\Hom(A_m/I,R)$ is in bijection with the set of $n$-tuples $x_1,\dots,x_n\in G_0[\pi^{m-1}](R)$.  Since $[\pi]_{G_0}(T)$ equals $T^{q^n}$ times a unit in $k\powerseries{T}$, the proposition follows.
\end{proof}

Finally, we include a useful lemma which compares the ideal $I$ to the maximal ideals of the $A_m$.

\begin{lemma} \label{idealinclusions} Let $m\geq 1$, and let $I_m$ be the maximal ideal of $A_m$.  Then we have the following inclusions of ideals in $A_m$:
\[ I_m^{nq^{(m-1)n}}\subset I_m^{[q^{({m-1})n)}]} \subset I \subset I_m^{q^{(m-1)n}}. \]
\end{lemma}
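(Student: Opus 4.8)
The plan is to deduce all three inclusions from Proposition~\ref{Iext} together with a dimension count. Abbreviate $N=q^{(m-1)n}$ and write $\mathfrak m=I_m$ for the maximal ideal of $A_m$; after choosing coordinates we have $A_m=k\powerseries{X_1,\dots,X_n}$ with $\mathfrak m=(X_1,\dots,X_n)$, where I drop the superscript $(m)$ on the $X_i$. Since $N$ is a power of the characteristic $p$, the $N$th power map is additive on $A_m$, so each element of $\mathfrak m$ has $N$th power in $(X_1^N,\dots,X_n^N)$; thus $\mathfrak m^{[N]}=(X_1^N,\dots,X_n^N)$, and $A_m/\mathfrak m^{[N]}=k[X_1,\dots,X_n]/(X_1^N,\dots,X_n^N)$ is a $k$-vector space of dimension exactly $N^n$, with basis the monomials all of whose exponents are $<N$.

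Next I would read two facts off Proposition~\ref{Iext}. First, it identifies $A_m/I$ with $k[X_1,\dots,X_n]/(X_1,\dots,X_n)^{[N]}$, which is again a $k$-vector space of dimension $N^n$. Second, this identification is induced by the reduction map $A_m\to A_m/I$ and carries each $X_i$ to $X_i$; hence $X_i^N\equiv 0\pmod I$, which together with the previous paragraph gives $\mathfrak m^{[N]}\subseteq I$.

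The key step is then the dimension count: the surjection $A_m/\mathfrak m^{[N]}\twoheadrightarrow A_m/I$ coming from $\mathfrak m^{[N]}\subseteq I$ is a surjection of $k$-vector spaces of the same finite dimension $N^n$, hence an isomorphism. Therefore $I=\mathfrak m^{[N]}=I_m^{[q^{(m-1)n}]}$ on the nose. This is what lets us avoid the one thing that looks unpleasant, namely a direct estimate of the $I_m$-adic order of the generators $X_i^{(1)}$ of $I$ (equivalently, of $\pi$) inside $A_m$.

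Finally the stated chain is immediate. The middle inclusion is the equality just proved. For $I\subseteq I_m^{q^{(m-1)n}}$ it suffices that $\mathfrak m^{[N]}=(X_1^N,\dots,X_n^N)\subseteq(X_1,\dots,X_n)^N=\mathfrak m^N$. For $I_m^{nq^{(m-1)n}}\subseteq I_m^{[q^{(m-1)n}]}$, note that $\mathfrak m^{nN}$ is generated by monomials of total degree $nN$ in the $X_i$, and by the pigeonhole principle any such monomial has some exponent $\geq N$, so lies in $(X_1^N,\dots,X_n^N)=\mathfrak m^{[N]}$. I do not foresee any real obstacle; the only non-formal ingredient is Proposition~\ref{Iext}, and the single idea needed is that $A_m/I$ and $A_m/I_m^{[q^{(m-1)n}]}$ are plainly $k$-vector spaces of equal finite dimension.
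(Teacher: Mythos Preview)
Your argument is correct, and in fact proves the sharper statement $I=I_m^{[q^{(m-1)n}]}$, from which the whole chain is a formality. The paper proceeds differently on the last inclusion $I\subset I_m^{q^{(m-1)n}}$: rather than a dimension count, it argues directly with the generators $X_i^{(1)}=[\pi^{m-1}]_{G^{\univ}}(X_i^{(m)})$, observing that every term of $[\pi^{m-1}]_{G^{\univ}}(T)$ of degree $<q^{(m-1)n}$ has coefficient in the maximal ideal of $A_0\subset I$, whence $I\subset II_m+I_m^{q^{(m-1)n}}$; iterating gives $I\subset I_m^{q^{(m-1)n}}$. Your approach trades this power-series bookkeeping (and the mild Nakayama-style iteration) for the single observation that $A_m/I$ and $A_m/I_m^{[q^{(m-1)n}]}$ are both $k$-vector spaces of dimension $q^{(m-1)n^2}$, which is already implicit in Proposition~\ref{Iext} once one writes $A_m=k\powerseries{X_1^{(m)},\dots,X_n^{(m)}}$. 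The payoff is a cleaner proof and the stronger conclusion $I=I_m^{[q^{(m-1)n}]}$; the paper's route, on the other hand, makes transparent \emph{why} the generators $X_i^{(1)}$ lie deep in $I_m$, which is conceptually closer to the later use of these inclusions (controlling $I$-adic versus $I_m$-adic topologies in Proposition~\ref{Areduced}).
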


\begin{proof} The inclusion $I_m^{nq^{(m-1)n}}\subset I_m^{[q^{({m-1})n)}]}$ follows easily from the fact that $I_m$ is generated by $n$ elements, namely $X_1^{(m)},\dots,X_n^{(m)}$.   The inclusion $I_m^{[q^{({m-1})n)}]} \subset I$ follows from Prop. \ref{Iext}.  For the inclusion $I \subset I_m^{q^{(m-1)n}}$, recall that $I$ is generated by $X_1^{(1)},\dots,X_n^{(1)}$.  We have
\[ X_i^{(1)}=[\pi^{m-1}]_{G^{\univ}}\left(X_i^{(m)}\right).\]
The formal $\OO_K$-module $G_0$ has height $n$, so the power series $[\pi]_{G_0}(T)$ has no terms of degree less than $q^n$.  
Since $G^{\univ}$ is a deformation of $G_0$, all terms of degree less than $q^n$ in $[\pi]_{G^{\univ}}(T)$ are contained in the maximal ideal of $A_0$, which is in turn contained in $I$.  
It follows that all terms of $[\pi^{m-1}]_{G^{\univ}}(T)$ of degree less than $q^{(m-1)n}$ are contained in $I$.  
This shows that $I\subset II_m+I_m^{q^{(m-1)n}}$.  
Iterating this relation shows that $I\subset II_m^r+I_m^{q^{(m-1)n}}$ for all $r\geq 1$, and therefore $I\subset I_m^{q^{(m-1)n}}$.
\end{proof}

\subsection{The deformation functor at infinite level}

Let $A_\infty=\varinjlim A_m$, and let $A$ be the $I$-adic completion of $A_\infty$.  Define the formal scheme $\mathcal{M}_{G_0}$ by
\[ \mathcal{M}_{G_0}=\Spf A.\]
Our goal is to give a description of $\mathcal{M}_{G_0}$ in terms of formal $K$-vector spaces.  This description will have nothing to do with formal $\OO_K$-modules or Drinfeld level structures.  We start by showing that $I$-adically completing $A_\infty$ does not introduce nilpotent elements.

\begin{prop} \label{Areduced} $A$ is reduced.
\end{prop}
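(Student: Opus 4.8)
## Proof Proposal for Proposition~\ref{Areduced}

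The plan is to reduce to a statement about the associated graded rings, exploiting the explicit description of the ideals $I$ and $I_m$ provided by Proposition~\ref{Iext} and Lemma~\ref{idealinclusions}. The key point is that although each $A_m \cong k\powerseries{X_1^{(m)},\dots,X_n^{(m)}}$ is a regular (hence integral, hence reduced) local ring, passing to the direct limit $A_\infty$ and then $I$-adically completing could a priori create nilpotents; we must show it does not. First I would observe that $A$ is reduced if and only if for every $x \in A$ with $x \neq 0$ and every $N$, some power of $x$ avoids $I^N$ — more usefully, it suffices to show that the $I$-adic completion of $A_\infty$ injects into the $I$-adic completion of each sufficiently large $A_m$, or rather that no nilpotent can survive. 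Concretely, since $A = \varprojlim_N A_\infty/I^N A_\infty$ and $A_\infty/I^N A_\infty = \varinjlim_m A_m/I^N A_m$, I would analyze $A_m/I^N A_m$ directly.

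The heart of the argument is Lemma~\ref{idealinclusions}, which sandwiches $I$ between powers of the maximal ideal $I_m$: we have $I_m^{nq^{(m-1)n}} \subset I \subset I_m^{q^{(m-1)n}}$ inside $A_m$. This means the $I$-adic and $I_m$-adic topologies on $A_m$ are, up to rescaling the exponents by a factor of $n$, the same — more precisely $I^r \subset I_m^{rq^{(m-1)n}}$ and $I_m^{rnq^{(m-1)n}} \subset I^r$ for all $r$. So the $I$-adic completion of $A_m$ equals its $I_m$-adic completion, which is just $A_m$ itself (it is already complete local Noetherian). Thus for each fixed $m$, $I$-adically completing $A_m$ changes nothing. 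The subtlety is that $A$ is the completion of the \emph{colimit}, not the colimit of the completions, so I would argue as follows: suppose $x \in A$ satisfies $x^2 = 0$. Write $x = \lim x_\ell$ with $x_\ell \in A_\infty$, so $x_\ell \in A_{m(\ell)}$ for some index $m(\ell)$, and $x_\ell^2 \in I^{N_\ell}$ with $N_\ell \to \infty$. The issue is that $m(\ell)$ may grow with $\ell$. To control this I would use that $x - x_\ell \in I^{c_\ell}A$ with $c_\ell \to \infty$, combined with a compatibility bound: an element of $I^c A$ already lying in $A_m$ lies in $I^c A_m$ (this needs the flatness/faithful-flatness of the transition maps $A_m \to A_{m'}$, or at least that $I^c A \cap A_m = I^c A_m$, which follows because each $A_m \to A_{m'}$ is a finite flat — indeed, from Drinfeld's theory, these are flat — map of regular local rings).

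I expect the main obstacle to be precisely this last compatibility: showing $I^c A_\infty \cap A_m = I^c A_m$, equivalently that the transition maps in the direct system are flat (or at least torsion-free and play well with the $I$-filtration). This should follow from the fact, due to Drinfeld, that $A_{m'}$ is free (hence faithfully flat) as an $A_m$-module for $m' \geq m$ — a standard feature of the Lubin-Tate tower with Drinfeld level structures. Granting this, faithful flatness gives $I^c A_{m'} \cap A_m = I^c A_m$ for each pair, and taking the colimit over $m'$ yields $I^c A_\infty \cap A_m = I^c A_m$; then a limiting argument transfers this to $A$. Once the compatibility is in hand, the reducedness argument closes: given $x \in A$ with $x^2 = 0$, choose $\ell$ with $x - x_\ell \in I^{c}A$ for $c$ large; then $x_\ell^2 = (x_\ell - x)^2 + 2x_\ell(x_\ell - x) \in I^{c}A \cap A_{m(\ell)} = I^c A_{m(\ell)}$ (after possibly absorbing $x_\ell \in A_{m(\ell)}$ into the intersection), and since $A_{m(\ell)}$ is an integral domain, having $x_\ell^2$ in arbitrarily high powers of $I_{m(\ell)}$ forces $x_\ell \in \bigcap_r I_{m(\ell)}^r = 0$; letting $\ell \to \infty$ gives $x = 0$. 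I would present the flatness of the transition maps as a citation to Drinfeld's work (or to the relevant discussion in \cite{DrinfeldElliptic} or the standard references on the Lubin-Tate tower) rather than reproving it here.
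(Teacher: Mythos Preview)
Your strategy coincides with the paper's: approximate $x\in A$ by elements $x_\ell\in A_\infty$, push $x_\ell^2$ into a high power of $I$ inside some $A_m$, use Lemma~\ref{idealinclusions} to convert to a power of the maximal ideal $I_m$, exploit that $A_m$ is a power series ring to halve the exponent, and convert back to a power of $I$. The paper's proof follows exactly this outline. Two points in your write-up deserve comment.

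\textbf{The flatness detour is unnecessary.} You fix $m=m(\ell)$ as an index with $x_\ell\in A_{m(\ell)}$ and then want $I^c A\cap A_{m(\ell)}=I^c A_{m(\ell)}$, which you propose to extract from flatness of the transition maps. The paper avoids this entirely by simply enlarging $m$. Since $I$ is finitely generated, $A/I^cA = A_\infty/I^cA_\infty$, so $x_\ell^2\in I^cA$ already gives $x_\ell^2\in I^cA_\infty=\bigcup_m I^cA_m$. Hence one may choose $m$ large enough that both $x_\ell\in A_m$ \emph{and} $x_\ell^2\in I^cA_m$ hold, with no intersection statement or appeal to Drinfeld's flatness required.

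\textbf{The final step is misstated.} You write that ``having $x_\ell^2$ in arbitrarily high powers of $I_{m(\ell)}$ forces $x_\ell\in\bigcap_r I_{m(\ell)}^r=0$''. But for a fixed $\ell$ you only know $x_\ell^2$ lies in \emph{one} specific power of $I_{m(\ell)}$, and you certainly do not expect $x_\ell=0$; moreover $m(\ell)$ varies with $\ell$, so the intersection over $r$ is not meaningful across the sequence. The correct closing (and the paper's) is: fix $N$; for $\ell$ large one has $x_\ell^2\in I^{2nN}A_m$ for suitable $m$, hence $x_\ell^2\in I_m^{2nNq^{(m-1)n}}$ by Lemma~\ref{idealinclusions}, hence $x_\ell\in I_m^{nNq^{(m-1)n}}$ by regularity of $A_m$, hence $x_\ell\in I^N$ by Lemma~\ref{idealinclusions} again. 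Since $N$ was arbitrary, $x=\lim_\ell x_\ell=0$. Note also that ``$A_{m(\ell)}$ is an integral domain'' is not quite the right hypothesis for the halving step; what is used is that the associated graded of a regular local ring is a polynomial ring, hence a domain.
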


\begin{proof}  Suppose that $a\in A$ has $a^2=0$.  Let $a_1,a_2,\dots$ be a sequence of elements of $A_\infty$ which converge to $a$.   Let $N\geq 1$ be arbitrary.  Since $a_i^2\to 0$ $I$-adically, there exists $i_0\geq 1$ such that for all $i\geq i_0$ we have $a_i^2\in I^{2nN}$.   Let $i$ be any such index, and let $m\geq 1$ be large enough so that $a_i\in A_m$, and that (one may need to make $m$ larger still) $a_i^2\in I^{2nN}$ holds true when one regards $I$ as the ideal of $A_m$ generated by $X_1^{(1)},\dots,X_n^{(1)}$.  Let $I_m$ be the maximal ideal of $A_m$.  By Lemma \ref{idealinclusions}, we have $a_i^2\in I_m^{2nq^{(m-1)n}N}$.  Since $A_m$ is a regular local ring (in fact it is a power series ring over $k$), we have $a_i\in I_m^{nq^{(m-1)n}N}$.  Using Lemma \ref{idealinclusions} once again, we have $a_i\in I^N$.  This shows that $a=\lim_{i\to\infty} a_i= 0$ in $A$.  
\end{proof}

We continue with a description of $A/I$.
\begin{prop}
\label{AmodI}
We have an isomorphism of $k$-algebras
\[ A/I \isom k[X_1^{1/q^\infty},\dots,X_n^{1/q^\infty}]/(X_1,\dots,X_n). \]
\end{prop}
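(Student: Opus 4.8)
The plan is to compute $A/I$ as the $I$-adic completion of $A_\infty/I = \varinjlim (A_m/I)$, using the explicit description of $A_m/I$ from Proposition~\ref{Iext}. First I would observe that since completion is right exact in an appropriate sense and $I$ is the ideal we are completing at, we have $A/I \isom A_\infty/IA_\infty = \varinjlim_m (A_m/I)$. So the real content is to identify this direct limit with $k[X_1^{1/q^\infty},\dots,X_n^{1/q^\infty}]/(X_1,\dots,X_n)$.

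By Proposition~\ref{Iext}, we have $A_m/I = k[X_1^{(m)},\dots,X_n^{(m)}]/(X_1^{(m)},\dots,X_n^{(m)})^{[q^{(m-1)n}]}$. The key step is to understand the transition maps $A_m/I \to A_{m+1}/I$. These are induced by the inclusion $A_m \injects A_{m+1}$, under which $X_i^{(m)} = [\pi]_{G^{\univ}}(X_i^{(m+1)})$. Working modulo $I$, equation~\eqref{piGmodI} and the fact that $G_0$ has height $n$ give $[\pi]_{G^{\univ}}(T) \equiv [\pi]_{G_0}(T) \equiv \text{(unit)}\cdot T^{q^n} \pmod I$ in $k\powerseries{T}$ — more precisely, modulo the nilpotent ideal $(X^{(m+1)})^{[q^{mn}]}$ the higher-order terms contribute, but the leading behavior is $X_i^{(m)} \equiv c\, (X_i^{(m+1)})^{q^n} + \dots$. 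So up to a unit rescaling of coordinates (which I would absorb by choosing the coordinates $X_i$ compatibly, e.g.\ via a limiting/Teichmüller-type normalization as in the $n=1$ discussion around~\eqref{Apowerseries}), the transition map sends $X_i^{(m)} \mapsto (X_i^{(m+1)})^{q^n}$. Identifying $X_i := $ the element $X_i^{(m)}$ viewed at level $m$ corresponds to $(X_i^{(m')})^{q^{(m'-m)n}}$ for $m' > m$, i.e.\ $X_i^{(m)}$ "is" $X_i^{q^{mn}\cdot(\text{const})}$ in a common fractional-power-series coordinate. Taking the limit, $\varinjlim_m (A_m/I)$ becomes the perfect closure of $k[X_1,\dots,X_n]$ modulo the limit of the ideals $(X^{(m)})^{[q^{(m-1)n}]}$, and one checks that this limiting ideal is exactly $(X_1,\dots,X_n)$ in the perfect closure (the exponent $q^{(m-1)n}$ in the $q^{mn}$-th root coordinate tends to the ideal $(X_1^{1/q^\infty},\dots,X_n^{1/q^\infty})$ generated in degree $\to 0^+$, i.e.\ the augmentation ideal's "radical up to the point"); concretely $(X^{(m)})^{[q^{(m-1)n}]}$ pulled into the level-$m'$ coordinates is $(X^{(m')})^{[q^{(m'-1)n}]}$, a cofinal system whose union generates $(X_1,\dots,X_n)$ after taking $1/q^\infty$ roots.

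**The main obstacle** will be keeping careful track of the unit factors and lower-order terms in $[\pi]_{G^{\univ}}(T) \bmod I$, and verifying that the direct system of ideals $(X_1^{(m)},\dots,X_n^{(m)})^{[q^{(m-1)n}]}$ is genuinely cofinal with the system cutting out $(X_1,\dots,X_n)$ in the perfect closure — in particular that no smaller or larger ideal survives in the colimit. This is essentially a bookkeeping argument with the inclusions of Lemma~\ref{idealinclusions}, which already packages the comparison between $I$ and powers of the maximal ideals $I_m$; I would lean on those inclusions to squeeze the limiting ideal between $(X_1,\dots,X_n)$ and itself. Once the transition maps and ideals are pinned down, identifying the colimit with $k[X_1^{1/q^\infty},\dots,X_n^{1/q^\infty}]/(X_1,\dots,X_n)$ is formal: the colimit of $k[X^{(m)}]$ along $T \mapsto T^{q^n}$ (up to units) is by definition the $X$-adic-free perfect closure $k[X_1^{1/q^\infty},\dots,X_n^{1/q^\infty}]$, and the quotient ideals match up.
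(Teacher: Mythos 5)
Your route is sound and genuinely different from the paper's. The paper does not compute the colimit of rings at all: it reads Proposition~\ref{Iext} as a moduli statement ($\Hom(A_m/I,R)$ is the set of $n$-tuples in $G_0[\pi^{m-1}](R)$), passes to the inverse limit to identify $\Hom(A/I,R)$ with $n$-tuples in $\varprojlim_\pi G_0(R)$ whose first coordinate vanishes, and then invokes the isomorphism~\eqref{Glimit} to recognize this as the functor represented by $k[X_1^{1/q^\infty},\dots,X_n^{1/q^\infty}]/(X_1,\dots,X_n)$. You instead chase the explicit transition maps $X_i^{(m)}\mapsto[\pi]_{G_0}(X_i^{(m+1)})=(\text{unit})\cdot(X_i^{(m+1)})^{q^n}$ and identify $\varinjlim_m k[X^{(m)}]/(X^{(m)})^{[q^{(m-1)n}]}$ with $\varinjlim_m k[X^{1/q^{(m-1)n}}]/(X)$. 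This is workable, and the ideal-theoretic part of your bookkeeping is fine (the ideals $(X^{(m)})^{[q^{(m-1)n}]}$ are visibly compatible under the transition maps, since units do not change ideals, so the colimit of the quotients is the quotient of the colimit by the single ideal generated by the $X_i^{(1)}$). The one place your sketch is genuinely vague is the ring-theoretic part: the transition maps differ from $T\mapsto T^{q^n}$ by \emph{unit power series}, not constants, so ``absorbing them by rescaling'' is really an inductive choice of coordinate changes at every level, and you need to check these can be made compatibly so that the colimit is literally a perfect polynomial quotient rather than just abstractly isomorphic to one after the fact. This can be done (e.g.\ by a Teichm\"uller-style limit normalization, as you suggest), but it is exactly the fuss that the paper's functor-of-points argument makes disappear: Yoneda is indifferent to the choice of coordinates, so identifying the two functors requires no normalization at all. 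What your version buys in exchange is an explicit coordinate on $A/I$, which is useful later but not needed for the proposition itself.
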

\begin{proof}
We have $A/I=\varinjlim A_m/I$.  Let $R$ be a $k$-algebra.  By Prop. \ref{Iext}, $\Hom(A/I,R)=\varprojlim \Hom(A_m/I,R)$ is identified with the set of $n$-tuples of elements of $\varprojlim G(R)$ whose projection onto the first coordinate is zero.  Under the isomorphism in Eq.~\ref{Glimit}, the set of such elements of $\varprojlim G(R)$ is carried onto the set of elements of $\varprojlim_{\tau}\Nil(R)$ whose projection onto the first coordinate is zero.  We now see that $\Spec A/I$ represents the same functor as $\Spec k[X_1^{1/q^\infty},\dots,X_n^{1/q^\infty}]/(X_1,\dots,X_n)$.
\end{proof}

\subsection{The case of height 1}

Suppose that $G_0$ is a formal $\OO_K$-module of height 1 over $k$;  then classical Lubin-Tate theory shows that there is a unique deformation of $G_0$ to any object in $\mathcal{C}$.  Thus $A_0=\OO_L$.  Let $G$ be the deformation of $G_0$ to $\OO_L=k\powerseries{\pi}$.

For an object $R$ of $\mathcal{C}$, a Drinfeld basis for $G[\pi](R)$ is an element $\lambda$ of the maximal ideal of $R$ such that $T^q-\lambda^{q-1}T$ is divisible by $[\pi]_G(T)$ in $R\powerseries{T}$.  Write $[\pi]_G(T)=T\Phi(T)$;  then $T^{q-1}-\lambda^{q-1}$ is divisible by $\Phi(T)$, and therefore $\lambda$ is a root of $\Phi(T)$.  By the Eisenstein criterion, $\Phi(T)$ equals a unit times an irreducible polynomial in $\OO_L[T]$ of degree $q-1$.    Similarly, for $m\geq 1$, if $\lambda_m$ is a Drinfeld basis for $G[\pi^m](R)$, then $\lambda_m$ is a root of $\Phi_m(T)=\Phi([\pi^{m-1}]_G(T))$, which is a unit times an irreducible polynomial of degree $(q-1)q^{m-1}$.

Thus $A_m=\OO_L\powerseries{T}/\Phi_m(T)$ equals the ring of integers in the extension field $L_m/L$ obtained by adjoining the $\pi^m$-torsion in $G$ to $L$.  Let $L_\infty$ be the union of the valued fields $L_m$, and let $\hat{L}_\infty$ be its completion.  We have $\mathcal{M}_{G_0}=\Spf \OO_{\hat{L}_\infty}$.
%The Galois group $\Gal(L_\infty/L)$ is canonically identified with $\OO_K^*$, so that $\OO_K^*$ acts via continuous automorphisms of $\hat{L}_\infty$.

For an object $R$ of $\Nilp_{\OO_L}$, a compatible system of Drinfeld bases in $\varprojlim_m G[\pi^m](R)$ gives an element of $\tilde{G}(R)=\varprojlim G(R)$.
Thus there is a natural morphism $\mathcal{M}_{G_0}\to\tilde{G}$ of formal schemes over $\Spf \OO_L$.  This corresponds to a continuous homomorphism of $\OO_L$-algebras $\OO_L\powerseries{t^{1/q^\infty}}\to \OO_{\hat{L}_\infty}$.  Let $\varpi$ be the image of $t$ in $\OO_{\hat{L}_\infty}$.  We get a map $\phi\from k\powerseries{t^{1/q^\infty}}\to \OO_{\hat{L}_\infty}$ given by $t\mapsto \varpi$.

%\begin{prop}  The base change of the morphism $\mathcal{M}_{G_0}\to \tilde{G}$ through $\Spec k\to \Spf \OO_L$ is an isomorphism $\mathcal{M}_{G_0}\otimes k\to \tilde{G}_0$.
%\end{prop}

\begin{prop} The map $\phi$ is an isomorphism.  Thus $\hat{L}_\infty$ is a perfect field, isomorphic to $k\laurentseries{t^{1/q^\infty}}$, the $t$-adic completion of the perfect closure of $k(t)$.
\end{prop}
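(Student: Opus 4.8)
The strategy is to exhibit the inverse of $\phi$ by a successive approximation argument along the ideal $I$. Throughout let $v$ denote the valuation on $\hat L_\infty$ normalized by $v(\pi)=1$; since each $\Phi_m$ is a unit times an Eisenstein polynomial of degree $(q-1)q^{m-1}$, the extension $L_m/L$ is totally ramified of that degree, $\lambda_m$ is a uniformizer, and $v(\lambda_m)=1/((q-1)q^{m-1})$. By construction $\varpi$ is the image of the coordinate $t$ under $\M_{G_0}\to\tilde G$; unwinding the identification $\tilde G\isom\varprojlim_F G^{(q^{-m})}$ from the proof of Prop.~\ref{IsKvs} together with Lemma~\ref{Tatemod} and Prop.~\ref{reductionmap}, and using the normalization $[\pi]_{G^{\univ}}(T)=\pi T+T^q$, one finds $\varpi=\lim_m\lambda_m^{q^m}$ (the element called $t$ in the introduction). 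The limit converges because, by additivity of $x\mapsto x^{q^N}$ in characteristic $p$, $\lambda_m^{q^m}-\lambda_{m+1}^{q^{m+1}}=\pi^{q^m}\lambda_{m+1}^{q^m}$ has valuation $q^m+1/(q-1)\to\infty$. Consequently $v(\varpi)=q/(q-1)$; more generally $\varpi^{1/q^j}=\lim_{m\geq j}\lambda_m^{q^{m-j}}$ has valuation $1/((q-1)q^{j-1})$, so in particular $v(\varpi^{1/q})=v(\lambda_1)$, whence $\varpi^{1/q}\,\OO_{\hat L_\infty}=I$.

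First I would settle injectivity and closedness of the image. The monomials $t^\alpha$ ($\alpha\in\Z[1/q]_{\geq0}$) are sent to the elements $\varpi^\alpha$, whose valuations $\tfrac{q}{q-1}\alpha$ are pairwise distinct; hence there is no cancellation and $v(\phi(f))=\tfrac{q}{q-1}\,v_t(f)$ for every $f$, where $v_t$ is the valuation on $k\powerseries{t^{1/q^\infty}}$ with $v_t(t)=1$. This shows immediately that $\phi$ is injective and that it is a homeomorphism onto its image; since the source is complete, $B:=\phi\bigl(k\powerseries{t^{1/q^\infty}}\bigr)$ is a closed subring of $\OO_{\hat L_\infty}$.

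Next I would check that $B$ maps onto $A/I$. For $j\geq1$, telescoping the product defining $\varpi^{1/q^j}$ gives $\varpi^{1/q^j}-\lambda_j=-\sum_{m\geq j}\pi^{q^{m-j}}\lambda_{m+1}^{q^{m-j}}$, whose valuation is $1+1/((q-1)q^{j})\geq 1/(q-1)=v(\lambda_1)$; hence $\varpi^{1/q^j}\equiv\lambda_j\pmod I$, and likewise $\varpi\equiv\lambda_1^{q}\equiv0\pmod I$. Now $A/I=\varinjlim_m A_m/I$, and since $A_m=k\powerseries{X_1^{(m)}}$ with $X_1^{(m)}=\lambda_m$, each $A_m/I$ is generated as a $k$-algebra by the reduction of $\lambda_m$; as $\lambda_1\equiv0$ (because $A_1/I=k$) this gives $A/I=k\bigl[\,\overline{\lambda_m}:m\geq2\,\bigr]$. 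Since $B$ contains $k$ together with all the $\varpi^{1/q^j}$, its image in $A/I$ contains $\overline{\lambda_m}$ for every $m\geq2$, so $B\to A/I$ is surjective.

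Finally I would run the approximation. Given $a\in\OO_{\hat L_\infty}$, surjectivity modulo $I=\varpi^{1/q}\OO_{\hat L_\infty}$ lets me write $a=b_0+\varpi^{1/q}a_1$ with $b_0\in B$; iterating produces $b_0,b_1,\dots\in B$ with $a=\sum_{i=0}^{r-1}\varpi^{i/q}b_i+\varpi^{r/q}a_r$ for all $r$. Since $v(\varpi^{r/q})=r/(q-1)\to\infty$ and $\OO_{\hat L_\infty}$ is complete, the partial sums converge and $a=\sum_{i\geq0}\varpi^{i/q}b_i\in\overline B=B$. Thus $\phi$ is surjective, hence an isomorphism, and passing to fraction fields yields $\hat L_\infty=\operatorname{Frac}(\OO_{\hat L_\infty})\isom\operatorname{Frac}\bigl(k\powerseries{t^{1/q^\infty}}\bigr)=k\laurentseries{t^{1/q^\infty}}$, which is a perfect field. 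The step I expect to be the main obstacle is the identification of $\varpi$ and its $q$-power roots with the $\lambda_m$ modulo $I$ — equivalently, the fact that $\phi$ already has dense image modulo $I$ — which is where the explicit shape of the Lubin--Tate recursion and Prop.~\ref{Iext} enter in an essential way; everything else is formal once $v(\varpi)$ is known. (One should also record the harmless points that the $I$-adic topology on $A_\infty$ agrees with the valuation topology, so $\OO_{\hat L_\infty}$ is $\varpi^{1/q}$-adically complete, and that $\phi$ is a local homomorphism.)
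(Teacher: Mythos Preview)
Your argument is correct and follows the same overall scheme as the paper: establish surjectivity modulo a principal ideal and then iterate, using completeness to conclude. Two points of genuine difference are worth noting.

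First, your injectivity argument is different and in some ways cleaner. You compute $v(\varpi)=q/(q-1)$ directly and observe that the monomials $\varpi^{\alpha}$ have pairwise distinct valuations, whence $v(\phi(f))=\tfrac{q}{q-1}v_t(f)$; this gives injectivity and the homeomorphism-onto-image statement in one stroke. The paper instead first shows that $\phi$ induces an \emph{isomorphism} $k[t^{1/q^\infty}]/t\to\OO_{\hat L_\infty}/\varpi$ (using $A_m/\lambda\isom k[t]/t^{q^{m-1}}$), and then deduces injectivity from a perfectness trick: if $\phi(a)=0$ then $\phi(a^{1/q^m})=0$ for all $m$, so $a^{1/q^m}\in(t)$ for all $m$, forcing $a=0$. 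Your valuation argument is more elementary but depends on the explicit normalization $[\pi]_G(T)=\pi T+T^q$; the paper's argument works for an arbitrary Lubin--Tate series $\Phi$.

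Second, you run the approximation modulo $I=(\lambda_1)=(\varpi^{1/q})$ and only need \emph{surjectivity} of $B\to A/I$, which you obtain from the congruences $\varpi^{1/q^j}\equiv\lambda_j\pmod I$. The paper instead works modulo the smaller ideal $(\varpi)$ and uses the full isomorphism there. Either ideal works; your choice makes the link to Prop.~\ref{Iext} (the case $n=1$) explicit, which is exactly what you flagged as the key step. The identification $\phi(t^{1/q^j})=\lim_{m\geq j}\lambda_m^{q^{m-j}}$ that underlies your congruences is indeed what the Teichm\"uller description in Lemma~\ref{FormalVSadic} unwinds to, and matches the formula the paper writes down explicitly in \S5.2.
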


\begin{proof} %First we examine $\mathcal{M}_{G_0}\otimes k=\Spec \OO_{\hat{L}_\infty}/\pi$.   
Since $\Phi_m(T)$ modulo $\pi\OO_L\powerseries{T}$ is a unit times $T^{q^{m-1}(q-1)}$, the same is true modulo $\lambda A_m\powerseries{T}$, and we have an isomorphism $A_m/\lambda \isom k\powerseries{t}/t^{q^{m-1}}$.  
%Therefore $\OO_{\hat{L}_\infty}/\pi \isom k[t^{1/q^\infty}]/t^{q-1}$.  
Thus the homomorphism $\OO_L\powerseries{t^{1/q^\infty}}\to \OO_{\hat{L}_\infty}$ induces an isomorphism $k[t^{1/q^\infty}]/t\to \OO_{\hat{L}_\infty}/\varpi$. 

This shows that $\phi$ is surjective:  any $b\in \OO_{\hat{L}_\infty}$ can be written $b=\phi(a_1)+\varpi b_1=\phi(a_1)+\phi(t)b_1$.  But then we can write $b_1=\phi(a_2)+\phi(t)b_2$, and so forth, the result being that $b=\phi(a_1+ta_2+\dots)$. 

For injectivity, suppose $\phi(a)=0$.  Since $k\powerseries{t^{1/q^\infty}}$ is perfect and $\OO_{\hat{L}_\infty}$ is reduced, we have $\phi(a^{1/q^m})=0$ for all $m\geq 1$.  Since $\phi$ modulo $t$ is an isomorphism, we have that $a^{1/q^m}$ is divisible by $t$ for all $m$, which shows that $a=0$.   
\end{proof}

%\begin{proof} Let $\lambda_m\in G[\pi^m](\OO_{L_m})$ be the universal Drinfeld basis for $G[\pi^m]$.  These form a compatible family, so we get an element $\lambda\in \varprojlim G[\pi^m](\OO_{\hat{L}_\infty})$.  Now consider the inclusion $\varprojlim G[\pi^m]\injects \tilde{G}$
%\end{proof}

\subsection{Determinants of truncated BT modules}

Returning to the general case, $G_0$ is once again a formal $\OO_K$-module of height $n$.

\begin{prop}
\label{mu_m}
\begin{enumerate}
\item There exists a formal $\OO_K$-module $\wedge G_0$ of dimension 1 and height 1, such that for all $m\geq 1$, $\wedge G_0[\pi^m]$ is the top exterior power of $G_0[\pi^m]$.  That is, the category of $\OO_K$-multilinear alternating maps of $\OO_K$-module schemes out of $G_0[\pi^m]^n$ admits an initial object
\[ \mu_{m,0}\from G_0[\pi^m]^n\to \wedge G_0[\pi^m]. \]
%(Compatible means that $\pi\mu_{m,0}(x_1,\dots,x_n)=\mu_{m-1}(\pi x_1,\dots,\pi x_n)$.)
\item  Assume that $k=\overline{k}$.  Let $R\in\mathcal{C}$, and let $G$ be a deformation of $G_0$ to $R$.  Let $\wedge G$ be the unique deformation of $\wedge G_0$ to $R$.  The family $\mu_{m,0}$ lifts to a compatible family of $\OO_K$-alternating maps of $\OO_K$-module schemes
\[ \mu_m\from G[\pi^m]^n\to \wedge G[\pi^m].\]
Further, $\mu_m$ has the property that if $x_1,\dots,x_n$ is a Drinfeld basis for $G[\pi^m](R)$, then $\mu_m(x_1,\dots,x_n)$ is a Drinfeld basis for $\wedge G[\pi^m](R)$.
\end{enumerate}
\end{prop}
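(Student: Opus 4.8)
The plan is to treat the two parts quite differently. Part (1) is about objects over the residue field $k$, and one should reduce it to the multilinear algebra of truncated Barsotti--Tate modules (equivalently, of the associated Dieudonné/coordinate modules), which the excerpt has already set up. The slick route is to pass to the universal cover: by Proposition~\ref{IsKvs}, $\tilde{G}_0$ is a formal $K$-vector space of height $n$ over $k$, so by \S\ref{multilinear} there is a universal $K$-alternating map $\delta_0\from\tilde{G}_0^{\,n}\to\wedge\tilde{G}_0$, where $\wedge\tilde{G}_0$ is the height-one formal $K$-vector space attached to $\wedge^n M(\tilde{G}_0)$. One then wants to descend this back to the level of $\pi$-divisible $\OO_K$-modules: the height-one formal $K$-vector space $\wedge\tilde{G}_0$ is the universal cover of a (essentially unique) formal $\OO_K$-module $\wedge G_0$ of dimension and height one, and the identification $\tilde G_0 = \varinjlim_m (G_0[\pi^m]\otimes K)$ of Lemma~\ref{Tatemod} lets one recover $\wedge G_0[\pi^m]$ as a $\pi^m$-torsion piece of $\wedge\tilde G_0$. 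The compatibility ``$\wedge G_0[\pi^m]=\bigwedge^n G_0[\pi^m]$'' then follows because taking $\pi^m$-torsion of an alternating map of $K$-vector space functors computes the exterior power of the finite flat group schemes $G_0[\pi^m]$ — this last point is exactly the content of Hadi/Genestier-type results on exterior powers of truncated BT groups, and I would cite \cite{Genestier} (Ch. I) for the existence of $\bigwedge^n G_0[\pi^m]$ as an initial object. Alternatively, and perhaps more self-containedly, one checks it directly on coordinate modules: $M(G_0[\pi^m])$ is a free $\OO_K/\pi^m$-module of rank $n$ with a semilinear operator, $\bigwedge^n$ of it is free of rank one, and the universal alternating map of coordinate modules pro-represents the universal alternating map of group schemes by the anti-equivalences of Propositions~\ref{antiequiv}–\ref{antiequivK} (truncated appropriately).

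For part (2), the strategy is to lift everything from $k$ to $R$ using the rigidity of the determinant line. First, $\wedge G_0$ has a unique deformation $\wedge G$ to $R$ because it has height one (classical Lubin--Tate theory, as used in \S4.3). Next, I would lift the universal alternating map: the universal cover functor only depends on the reduction mod the ideal of definition (Proposition~\ref{reductionmap} and the discussion following it), so $\tilde G$ over $R$ is canonically the lift of $\tilde G_0$, and by the last paragraph of \S\ref{FormalVSoverAdic} the determinant map $\delta_0\from\tilde G_0^{\,n}\to\wedge\tilde G_0$ lifts to a universal $K$-alternating map $\delta\from\tilde G^{\,n}\to\wedge\tilde G$ of formal $K$-vector spaces over $R$. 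Restricting $\delta$ to $\pi^m$-torsion (using Lemma~\ref{Tatemod} over $R$, valid because $\pi$ is nilpotent in $R$) produces the desired $\OO_K$-alternating maps $\mu_m\from G[\pi^m]^n\to\wedge G[\pi^m]$, and these are automatically compatible in $m$ because they all come from the single map $\delta$. That $\mu_m$ reduces to $\mu_{m,0}$ mod the maximal ideal is immediate from the construction.

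The genuinely substantive step is the last assertion of part (2): that $\mu_m$ carries a Drinfeld basis to a Drinfeld basis. Here the abstract formal-vector-space machinery does not obviously help, because the Drinfeld condition — divisibility of $\prod_{v}(T-\phi(v))$ by $[\pi]_G(T)$ in $R\powerseries T$ — is a statement about power series over $R$, not about the generic fiber or about $K$-vector spaces. I expect this to be the main obstacle. The plan is the standard deformation-theoretic dévissage: reduce to the case where $R$ is a domain (or even a field, or a discrete valuation ring) by a limit/flatness argument, then to the case $R=A_m$ with its universal Drinfeld basis $X_1^{(m)},\dots,X_n^{(m)}$; over $A_m$ the ring is a regular local ring, hence a domain, so one may check the divisibility after passing to the fraction field, where $G[\pi^m]$ becomes étale and ``Drinfeld basis'' just means ``$\OO_K$-basis of the $\pi^m$-torsion.'' There the claim is the elementary fact that the top exterior power of a free rank-$n$ module sends a basis to a generator. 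The passage from ``generates after inverting the uniformizer'' back to ``is a Drinfeld basis over $A_m$'' then uses that $\wedge G$ has height one, so that $\wedge G[\pi^m]$ is a Drinfeld module of the simplest kind and the divisibility criterion is controlled by a single Eisenstein-type polynomial, exactly as in the height-one analysis of \S4.3. One must be a little careful that $\mu_m(x_1,\dots,x_n)$ lies in the maximal ideal and is not merely a unit multiple of a generator; this is where one invokes that $\mu_{m,0}$ already sends the reduction of a Drinfeld basis to a (necessarily topologically nilpotent) Drinfeld basis over $k$, which follows from part (1) together with the fact that over $k$ every point of $G_0[\pi^m]$ is topologically nilpotent.
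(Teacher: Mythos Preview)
Your approach is conceptually quite different from the paper's.  The paper does not attempt an abstract construction at all: in \S\ref{determinantbyhand} it simply takes the standard $G_0$ with $[\pi]_{G_0}(X)=X^{q^n}$, writes down $\wedge G$ explicitly via $[\pi]_{\wedge G}(X)=\pi X+(-1)^{n-1}X^q$, and defines $\mu_m$ by the closed formula
\[
\mu_m(X_1,\dots,X_n)=\sum_{\substack{0\le a_i<m\\ \sum a_i=(m-1)(n-1)}}\Delta\bigl([\pi^{a_1}]X_1,\dots,[\pi^{a_n}]X_n\bigr)
\]
using the Moore determinant; the verification of all the claimed properties (including the Drinfeld-basis statement) is outsourced to \cite{Wei}.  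For Part~(1) the paper likewise just cites Hedayatzadeh's thesis, so your willingness to invoke external work there is in the same spirit.

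There is, however, a genuine gap in your Part~(2).  You propose to obtain $\mu_m\from G[\pi^m]^n\to\wedge G[\pi^m]$ by ``restricting $\delta$ to $\pi^m$-torsion,'' but this does not make sense as stated: $\tilde G(R)$ is a $K$-vector space and has no $\pi$-torsion, and there is no natural map $G[\pi^m]\hookrightarrow\tilde G$ --- the natural map goes the other way.  Even if you work instead with the $\OO_K$-lattice $\varprojlim_j G[\pi^j]\subset\tilde G$, you would need to check (i) that $\delta$ carries this lattice into the corresponding lattice inside $\widetilde{\wedge G}$, and (ii) that the resulting map on pro-objects descends compatibly to each finite level $G[\pi^m]$.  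Neither follows formally from $\delta$ being $K$-alternating, and (ii) in particular needs surjectivity of $\varprojlim_j G[\pi^j]\to G[\pi^m]$ as fppf sheaves, which you have not addressed.  This is exactly why the paper runs the logic in the opposite order: first build $\mu_m$ by hand, then pass to the limit to get $\mu$, and only afterwards (Prop.~\ref{detsagree}) identify $\mu$ with $\delta$ via the universal property of $\wedge\tilde G$.  By contrast, your d\'evissage for the final Drinfeld-basis assertion --- reduce to the universal ring $A_m$, pass to its fraction field where $G[\pi^m]$ is \'etale and a Drinfeld basis is an honest $\OO_K/\pi^m$-basis, then pull the divisibility back using that $A_m$ is a domain and that $[\pi]_{\wedge G}$ has unit leading coefficient --- is a sound alternative to the paper's citation of \cite{Wei}, \emph{once} the maps $\mu_m$ are actually in hand.
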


Part 1 is a special case of the main theorem of the thesis of Hadi Hedayatzadeh \cite{Hedayatzadeh}, who constructs arbitrary exterior powers of $\pi$-divisible modules of dimension $\leq 1$ over a field.  The methods there could probably prove Part 2 as well even without the hypothesis that $k$ be algebraically closed.  We prove both parts in \S\ref{determinantbyhand}.  

%A proof is offered in Section\ref{}....

%Let $G^{\univ}/A_0$ be the universal deformation of $G_0$ to $A_0$.

%\begin{prop} \label{mu_m} There exists a formal $\OO_K$-module $\wedge G$ of dimension 1 and height 1 over $\OO_L$, together with a compatible family $\mu_m$ of $\OO_K$-alternating maps of $\OO_K$-module schemes
%\[ \mu_m\from G^{\univ}[\pi^m]^n\to (\wedge G^{\univ})[\pi^m]\]
%(where $\wedge G^{\univ}=\wedge G\otimes_{\OO_L} A_0$)
%having the property that if $X_1,\dots,X_n$ is a Drinfeld basis of $G^{\univ}[\pi^m]$ over some $A_0$-algebra $R$, then $\mu_m(X_1,\dots,X_n)$ is a Drinfeld basis of $\wedge G[\pi^m]$ over $R$.  (Compatible means that $\pi\mu_m(X_1,\dots,X_n)=\mu_{m-1}(\pi X_1,\dots,\pi X_n)$.)
%\end{prop}

%Let $L_m$ be the field obtained by adjoining the $\pi^m$ torsion of $\wedge G$ to $\OO_L$, as in the previous paragraph.  The morphisms $\mu_m$ determine inclusions $\OO_{L_m}\injects A_m$.

Letting $m\to\infty$ in Prop.~\ref{mu_m}, we obtain an $\OO_K$-alternating map
\[ \mu\from \varprojlim G[\pi^m]^n \to \varprojlim \wedge G[\pi^m] \]
between functors $\Nilp_{R}\to \Mod_{\OO_K}$.  Tensoring with $K$ and applying Lemma \ref{Tatemod}, we get a $K$-alternating map $\tilde{G}^n\to \widetilde{\wedge G}$ of functors $\Nilp_{R}\to \Vect_K$, which we also call $\mu$.
Now by part (2) of Lemma~\ref{reductionmap}, $\tilde{G}$ and $\widetilde{\wedge G}$ are formal $K$-vector spaces over $R$.  We also have the top exterior power $\wedge\tilde{G}$ of the formal $K$-vector space $\tilde{G}$, as in \S\ref{multilinear}.  By the universality of $\wedge\tilde{G}$, we may deduce:
%Write $\tilde{G}$ for the lift of $\tilde{G}_0$ to $\OO_L$, as in \S\ref{FormalVSoverAdic}. Then $\tilde{G}\otimes_{\OO_L} A_0$ and $\tilde{G}^{\univ}$ are canonically isomorphic.  We have the top exterior power $\wedge\tilde{G}$ of the formal $K$-vector space $\tilde{G}$ over $\OO_L$.  By universality of the determinant morphism, we have
\begin{prop}  \label{detsagree} There is an isomorphism $\wedge \tilde{G}\to \widetilde{\wedge G}$ of formal $K$-vector spaces over $R$ making the diagram commute:
\[
\xymatrix{
& \wedge \tilde{G} \ar[dd] \\
\tilde{G}^n \ar[ur]^{\delta} \ar[dr]_{\mu} & \\
& \widetilde{\wedge G}
%(\tilde{G}^{\univ})^n \ar[r]^{\mu} \ar[d] & \widetilde{\wedge G}^{\univ} \ar[d] \\
%\tilde{G}^n\otimes_{\OO_L} A_0 \ar[r]_{\delta}        & \wedge \tilde{G} \otimes_{\OO_L} A_0
}
\]
\end{prop}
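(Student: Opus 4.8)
\emph{Proof proposal.}
The morphism $\psi\colon\wedge\tilde G\to\widetilde{\wedge G}$ together with the commutativity of the triangle is forced by the universal property of $\delta\colon\tilde G^n\to\wedge\tilde G$ (end of \S\ref{multilinear}, lifted to $R$ in \S\ref{FormalVSoverAdic}): since $\mu$ is $K$-alternating it factors uniquely as $\mu=\psi\circ\delta$. The entire content of the proposition is therefore that $\psi$ is an \emph{isomorphism}, and the plan is to reduce this first to a question about one-dimensional objects over the residue field, and then to the non-triviality of the finite-level determinant maps $\mu_m$ of Prop.~\ref{mu_m}.

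\emph{Reduction to the residue field.} By Prop.~\ref{reductionmap}(2) and the construction in \S\ref{FormalVSoverAdic}, the functors $\tilde G$, $\wedge\tilde G$, $\widetilde{\wedge G}$ on $\Nilp_R$ are the lifts of the formal $K$-vector spaces $\tilde G_0$, $\wedge\tilde G_0$, $\widetilde{\wedge G_0}$ over $k=R/I$; by Prop.~\ref{reductionmap}(1) the natural transformation $\mu$ factors through reduction modulo $I$, so $\psi$ and $\mu$ are the lifts of their restrictions $\psi_0$, $\mu_0$ to $k$-algebras. A natural transformation between two lifts is determined by its restriction to $k$-algebras, and every morphism of formal $K$-vector spaces over $k$ extends to one between the lifts; so lifting is fully faithful on morphisms and $\psi$ is an isomorphism iff $\psi_0$ is. Now $\wedge G_0$ has dimension and height $1$, so $\widetilde{\wedge G_0}$ has height $1$ (Prop.~\ref{IsKvs}), and $\wedge\tilde G_0$ also has height $1$ (\S\ref{multilinear}). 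By Thm.~\ref{detexistence}, $\mu_0$ corresponds to a $K$-alternating map of isocrystals $M^*(\tilde G_0)^n\to M^*(\widetilde{\wedge G_0})$, which factors uniquely through $\wedge^n M^*(\tilde G_0)$; the resulting map $\wedge^n M^*(\tilde G_0)\to M^*(\widetilde{\wedge G_0})$ is, up to the canonical duality, the map induced by $\psi_0$ on coordinate modules (Prop.~\ref{antiequivK}). Its source and target are one-dimensional over the field $L=k\laurentseries{\pi}$, and an $L$-linear map between one-dimensional $L$-vector spaces that intertwines the semilinear operators is either zero or an isomorphism of isocrystals. Hence $\psi$ is an isomorphism if and only if $\mu_0\neq 0$.

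\emph{Non-vanishing of $\mu_0$.} By construction $\mu_0$ is obtained by letting $m\to\infty$ in $\mu_{m,0}\colon G_0[\pi^m]^n\to\wedge G_0[\pi^m]$ and inverting $\pi$. Each $\mu_{m,0}$ is a non-zero morphism of $\OO_K$-module schemes over $k$: it is the initial $\OO_K$-alternating map, given by a non-zero generalized Moore determinant; equivalently, over $A_m$ the lift $\mu_m$ carries the universal Drinfeld $\pi^m$-basis to a Drinfeld $\pi^m$-basis of $\wedge G^{\univ}[\pi^m]$ (Prop.~\ref{mu_m}(2)), which is a non-zero element of the domain $A_m$ because $[\pi^m]_{\wedge G^{\univ}}(T)$ has non-zero linear coefficient $\pi^m$ and hence does not divide $T^{q^m}$. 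To see that these non-trivial finite-level determinants assemble to a non-zero $\mu_0$, I would evaluate on the universal compatible family of Drinfeld bases: by Prop.~\ref{AmodI} and the proof of Prop.~\ref{Iext}, the reductions $\bar X_i^{(m)}$ of the universal Drinfeld bases assemble, over $R_0=A/I$, into $n$ vectors of $\tilde G_0(R_0)$, and $\mu_{m,0}$ sends them to the Moore determinant of $\bar X_1^{(m)},\dots,\bar X_n^{(m)}$ in $R_0$; by Prop.~\ref{Iext} the coordinates $X_i^{(m)}$ are nilpotent in $A_m/I$ only of order $q^{(m-1)n}$, which exceeds the per-variable degree $q^{n-1}$ of the Moore determinant once $m\geq 2$, so this determinant is non-zero in $R_0$. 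Thus $\mu_0$ sends these vectors to an element of $\widetilde{\wedge G_0}(R_0)$ with infinitely many non-zero components, so $\mu_0\neq 0$ and $\psi$ is an isomorphism.

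The step I expect to be the main obstacle is this last one: passing from ``$\mu_{m,0}$ is non-zero as a morphism of schemes'' to ``$\mu_0$ is non-zero as a natural transformation'' requires producing an explicit coefficient ring together with a non-trivial compatible family of Drinfeld bases and controlling, via the $I$-adic nilpotency bounds of Prop.~\ref{Iext}, how the determinants are truncated there. One must be careful because over a ring in which $\pi$ is nilpotent the zero section is a legitimate Drinfeld basis of a height-one module, so the actual non-vanishing has to be extracted at finite level over $A_m$ (where $\pi\neq 0$) before being transported down to $A/I$.
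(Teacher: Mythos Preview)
Your overall strategy is correct and is precisely what the paper's one-line appeal to ``the universality of $\wedge\tilde G$'' leaves unsaid: the paper offers no argument beyond the sentence preceding the proposition, so universality produces the map $\psi$ and the reader is expected to see that it is an isomorphism. Your reduction to the residue field and then to the observation that a morphism between one-dimensional nilpotent $K$-isocrystals over $k\laurentseries{\pi}$ is either zero or invertible is the right way to close that gap, and is more careful than anything the paper supplies.

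There is, however, a slip in your non-vanishing step, and it is exactly the obstacle you flagged in your last paragraph. Over $k$ the map $\mu_{m,0}$ is \emph{not} the plain Moore determinant $\Delta$; in the standard model of \S\ref{determinantbyhand} one has
\[
\mu_{m,0}(X_1,\dots,X_n)=\sum_{\substack{0\le a_i<m\\ \sum a_i=(m-1)(n-1)}}\Delta\bigl(X_1^{q^{na_1}},\dots,X_n^{q^{na_n}}\bigr),
\]
whose per-variable exponents reach $q^{nm-1}$, not $q^{n-1}$. So the inequality ``$q^{n-1}<q^{(m-1)n}$ for $m\ge2$'' does not control $\mu_{m,0}$, and indeed for small $m$ every monomial of $\mu_{m,0}(\bar X^{(m)}_1,\dots,\bar X^{(m)}_n)$ dies in $A_m/I$. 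The repair is easy and stays within your framework: the monomials in the display above are pairwise distinct (the exponent $na_i+j$ with $0\le j\le n-1$ recovers both $a_i$ and $j$), so it suffices to exhibit one surviving term. For $m=n+1$ the tuple $a_1=\cdots=a_n=n-1$ satisfies the constraints and gives every exponent $\le q^{n^2-1}<q^{(m-1)n}=q^{n^2}$, hence a non-zero contribution in $A_m/I$ by Prop.~\ref{Iext}. With this correction your argument that $\mu_0\neq 0$, and hence that $\psi$ is an isomorphism, goes through.
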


\subsection{The main theorem}
Recall that $G^{\univ}$ is the universal deformation of $G_0$ to $A_0$.  We assume henceforth that $k$ is algebraically closed.  Lemma \ref{mu_m} gives us the top exterior power $\wedge G^{\univ}$.  All the same we could have considered a lift $G$ of $G_0$ to $\OO_L$, and taken its top exterior power $\wedge G$.  Since both $\wedge G^{\univ}$ and $\wedge G$ are deformations of the same height 1 formal $\OO_K$-module $\wedge G_0$, we must have $\wedge G^{\univ}=\wedge G\otimes_{\OO_L} A_0$.

We have a universal Drinfeld basis for $G^{\univ}[\pi^m](A_m)$.  Passing to the limit as $m\to\infty$, we get elements $x_1,\dots,x_n\in \varprojlim_\pi G^{\univ}(A) = \tilde{G}^{\univ}(A)$.  The determinant $\delta(x_1,\dots,x_n)$ constitutes a Drinfeld basis for $\wedge G$ over $A$ (Lemma~\ref{mu_m}).  This in turn determines a morphism $\mathcal{M}_{G_0}\to \mathcal{M}_{\wedge G_0}$.  By Prop.~\ref{detsagree}, we have the diagram of formal schemes over $\Spf \OO_L$:

\begin{equation}
\label{Mdiagram}
\xymatrix{
\mathcal{M}_{G_0} \ar[r] \ar[d] & \mathcal{M}_{\wedge G_0} \ar[d] \\
\tilde{G}^n \ar[r]_{\delta} & \wedge \tilde{G}
}
\end{equation}
The main theorem of the paper is:
\begin{Theorem}\label{mainthm}
The above diagram is Cartesian.  That is, $\mathcal{M}_{G_0}$ is isomorphic to the fiber product of $\tilde{G}^n$ and $\mathcal{M}_{\wedge G_0}$ over $\wedge \tilde{G}$.
\end{Theorem}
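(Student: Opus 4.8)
The plan is to turn the statement into an explicit isomorphism of topological rings and prove surjectivity and injectivity separately; the injectivity is the conceptual heart, while surjectivity carries the coordinate bookkeeping.

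First I would make the right-hand vertical map of diagram~\eqref{Mdiagram} completely explicit using the height one case. By the Proposition of \S4.3, the map $\mathcal{M}_{\wedge G_0}\to\wedge\tilde G$ over $\Spf\OO_L$ is a closed immersion: on pro-representing rings it is the surjection $\OO_L\powerseries{Y^{1/q^\infty}}\to\OO_{\hat L_\infty}$, $Y\mapsto\varpi$, and one checks its kernel is the principal ideal generated by $\pi-p(Y)$, where $p\in k\powerseries{Y^{1/q^\infty}}$ is the ($\OO_K^\times$-invariant) fractional power series expressing $\pi$ as a function of $\varpi$. Pulling this back along $\delta\from\tilde G^n\to\wedge\tilde G$ identifies $P:=\tilde G^n\times_{\wedge\tilde G}\mathcal{M}_{\wedge G_0}$ with $\Spf\bigl(\OO_L\powerseries{X_1^{1/q^\infty},\dots,X_n^{1/q^\infty}}/(\pi-p(\delta(X_1,\dots,X_n)))\bigr)$; here one must check that the substitution $p(\delta(X_1,\dots,X_n))$ converges to a genuine element of the fractional power series ring and, solving for $\pi$ by successive approximation, that this quotient is nothing but $k\powerseries{X_1^{1/q^\infty},\dots,X_n^{1/q^\infty}}$, with $\pi$ becoming a distinguished power series vanishing at the origin. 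Diagram~\eqref{Mdiagram} then supplies a morphism $h\from\mathcal{M}_{G_0}\to P$, and the theorem becomes the statement that the induced map $h^\#\from k\powerseries{X_1^{1/q^\infty},\dots,X_n^{1/q^\infty}}\to A$ is an isomorphism, where $h^\#$ sends $X_i$ to the $i$th coordinate of the universal point $(x_1,\dots,x_n)\in\tilde G^{\univ}(A)^n$.

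For surjectivity I would note that $h^\#$ is continuous for the $(X_1,\dots,X_n)$-adic topology on the source and the $I$-adic topology on $A$ — indeed $h^\#(X_i)\equiv 0\pmod I$ by Prop.~\ref{AmodI} — and that the associated graded map is surjective. In degree zero this is precisely Prop.~\ref{AmodI}; for the positive graded pieces it is enough to know $I\subseteq h^\#\bigl((X_1,\dots,X_n)\bigr)A$, and since $I$ is generated by $X_1^{(1)},\dots,X_n^{(1)}$ this reduces to showing each $X_i^{(1)}$ lies in the image of $h^\#$ (it then lies in $h^\#((X_1,\dots,X_n))A$, being in $I$). This is where the explicit geometry enters: following the coordinate on $\tilde G$ through the isomorphism $\tilde G\isom\varprojlim_\tau\bG_{a,k}^{\flat}$ of Prop.~\ref{IsKvs} and its lift to $\OO_L$ (Prop.~\ref{reductionmap}), the finite-level Drinfeld basis elements $X_i^{(m)}$ are, modulo the fractional-root structure, $q$-power roots of $h^\#(X_i)$, so that $X_i^{(1)}=[\pi^{m-1}]_{G^{\univ}}(X_i^{(m)})$ is a convergent fractional power series in $h^\#(X_i)$ with $\OO_L$-coefficients, hence in the image of $h^\#$. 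Completeness of $A$ together with the graded surjectivity then shows $h^\#$ is surjective.

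For injectivity — which uses only Prop.~\ref{Areduced} and Prop.~\ref{AmodI} — put $J=\ker h^\#$. Since $h^\#$ modulo $I$ is, by Prop.~\ref{AmodI}, the canonical projection onto $k[X_1^{1/q^\infty},\dots,X_n^{1/q^\infty}]/(X_1,\dots,X_n)$, we get $J\subseteq(X_1,\dots,X_n)$. The source is a perfect ring and $A$ is reduced, so $f\in J$ implies $f^{1/q}\in J$ (its image is a $q$th root of $0$ in a reduced ring, hence $0$). Now argue by induction on $N$: if $J\subseteq(X_1,\dots,X_n)^N$ with $N\ge 1$, then for $f\in J$ one has $f^{1/q}\in J\subseteq(X_1,\dots,X_n)^N$, so by the identity $(a+b)^q=a^q+b^q$ valid in characteristic $p$,
\[
f=(f^{1/q})^q\in\bigl((X_1,\dots,X_n)^N\bigr)^{[q]}\subseteq(X_1,\dots,X_n)^{qN}\subseteq(X_1,\dots,X_n)^{N+1}.
\]
Hence $J\subseteq\bigcap_N(X_1,\dots,X_n)^N=0$, the intersection vanishing because $k\powerseries{X_1^{1/q^\infty},\dots,X_n^{1/q^\infty}}$ is separated for the $(X_1,\dots,X_n)$-adic topology. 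Thus $h^\#$ is bijective, $h$ is an isomorphism, and diagram~\eqref{Mdiagram} is Cartesian. The step I expect to be the real obstacle is the surjectivity, and within it the coordinate bookkeeping above: it is exactly where one must pass carefully between the tower of $\OO_K$-module structures, which only sees the $q^n$th-power Frobenius, and the formal $K$-vector space $\tilde G$, which sees all of $\tau$, and where the explicit determinant $\delta$ and the height one computation of \S4.3 are needed; the identification of $P$ in the first paragraph is routine but requires some care, while the injectivity is entirely formal once Propositions~\ref{Areduced} and~\ref{AmodI} are in hand.
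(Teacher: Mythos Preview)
Your overall architecture is sound and is essentially the paper's argument reorganized: you front-load the explicit identification of the fiber product with $k\powerseries{X_1^{1/q^\infty},\dots,X_n^{1/q^\infty}}$ (which the paper derives afterwards), and then split the ring map into injectivity and surjectivity. Your injectivity argument is correct and is, in effect, the same Frobenius-descent idea the paper uses: both reduce to the isomorphism modulo the ideals of definition together with $A$ being reduced and the source being perfect. You are right that only reducedness of $A$, not full perfectness, is needed here.

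The gap is in surjectivity, and it is exactly the step you flagged as the obstacle---but your proposed bookkeeping does not work. You assert that the Drinfeld basis elements $X_i^{(m)}$ are ``$q$-power roots of $h^\#(X_i)$'' and hence that $X_i^{(1)}=[\pi^{m-1}]_{G^{\univ}}(X_i^{(m)})$ is a fractional power series in $h^\#(X_i)$ with $\OO_L$-coefficients. Neither claim holds. The compatible $q$-power roots $h^\#(X_i^{1/q^r})$ arise from the Teichm\"uller lift through the reduction isomorphism $\tilde G(A)\cong\tilde G_0(A/I)$; they are \emph{not} the $X_i^{(m)}$, which are related to one another by $[\pi]_{G^{\univ}}$, a power series whose nonleading coefficients $u_1,\dots,u_{n-1}$ lie in $A_0$, not in $\OO_L$. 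So iterating $[\pi]_{G^{\univ}}$ produces expressions with $A_0$-coefficients, and you cannot conclude they lie in the image of $h^\#$ without already knowing that $A_0$ does. What your argument actually needs is the statement that the elements $h^\#(X_i)$ generate $I$ as an ideal of $A$, and this is genuinely nontrivial.

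The paper supplies precisely this missing step, packaged as the perfectness of $A$. One first shows (Lemma~\ref{modI2}) that $[\pi]_{G^{\univ}}\equiv[\pi]_{G_0}$ modulo $I^2$---the point being $I^2$, not merely $I$---so that the limits $Y_i=\lim_m[\pi^m]_{G_0}(X_i^{(m-1)})$ agree with $X_i^{(1)}$ modulo $I^2$. These $Y_i$ admit all $q$-power roots by construction, and a Nakayama argument (Lemma~\ref{II}) then gives $(Y_1,\dots,Y_n)=I$. With this in hand, either your graded-surjectivity argument goes through, or one argues as the paper does: Frobenius is surjective on both $B$ and $A$, and the isomorphism $B/J\to A/I$ propagates via the Frobenius square to $B/J^{[q^m]}\to A/I^{[q^m]}$ for all $m$. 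So your outline becomes a valid proof once you insert the $I^2$-congruence and Nakayama step; without it, surjectivity is unproved.
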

The proof will occupy us for the rest of the section.  Here is an overview.  The fiber product of $\tilde{G}^n$ and $\mathcal{M}_{\wedge G_0}$ over $\wedge\tilde{G}$ is an affine formal scheme, say $\Spf B$.  We have a map $\phi\from B\to A$ which we claim is an isomorphism.

The proof will proceed by establishing the following facts:
\begin{enumerate}
\item $A$ is perfect.
\item The Frobenius map is surjective on $B$.
\item There exists a finitely generated ideal of definition $J$ of $B$ such that $\phi$ descends to an isomorphism $B/J\to A/I$.
\end{enumerate}

After these facts are established, we can show that $B\to A$ is an isomorphism.  For each $m\geq 1$, the $q^m$th power Frobenius map gives an isomorphism $\tau^m\from A/I\to A/I^{[q^m]}$.   The same Frobenius map gives a surjection $B/J\to B/J^{[q^m]}$.  The diagram
\[
\xymatrix{B/J \ar[r]^{\sim} \ar[d]_{\tau^m} & A/I \ar[d]^{\tau^m} \\
B/J^{[q^m]} \ar[r] & A/I^{[q^m]} }
\]
now shows that $B/J^{[q^m]}\to A/I^{[q^m]}$ is an isomorphism.  The rings $A$ and $B$ are complete with respect to the $I$-adic and $J$-adic topologies, respectively, and because these ideals are finitely generated, the sequences $I^{[q^m]}$ and $J^{[q^m]}$ generate the same topologies.  Thus $B\to A$ is an isomorphism.

\subsection{$A$ is perfect}

Consider the power series $[\pi]_{G_0}(T)$ as lying in $A\powerseries{T}$ via the inclusion $k\injects A$. We can do slightly better than Eq.\eqref{piGmodI}:

\begin{lemma} \label{modI2} $[\pi]_{G^{\univ}}(T)\equiv [\pi]_{G_0}(T)\mod I^2\powerseries{T}$.
\end{lemma}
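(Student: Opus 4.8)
The plan is to compare the universal multiplication-by-$\pi$ power series on the universal deformation with the one over the residue field, refining Eq.~\eqref{piGmodI} one order further in the $I$-adic filtration on $A_1$ (equivalently on $A_0$). Recall that $A_0 \cong \OO_L\powerseries{u_1,\dots,u_{n-1}} = k\powerseries{\pi, u_1,\dots,u_{n-1}}$, and that the ideal $I$ of $A_1$ restricted to $A_0$ is the maximal ideal $(\pi, u_1,\dots,u_{n-1})$ of $A_0$ — this is because $A_1/I = A_1/I_1$-adically we have $A_0/I = k$ by Prop.~\ref{Iext} applied with $m=1$ (or simply because $G^{\univ}\otimes_{A_1} A_1/I = G_0$ and the structural map kills $\pi$). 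So the claim $[\pi]_{G^{\univ}}(T) \equiv [\pi]_{G_0}(T) \bmod I^2\powerseries{T}$ is the assertion that the first-order deformation of the power series $[\pi]_{G_0}$ along the base $\Spf A_0$ vanishes to second order at the closed point — in other words, that the tangent map of $\Spf A_0 \to (\text{versal space of power series})$ recording $[\pi]_{G^{\univ}}$ is zero.

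First I would reduce to working modulo $I^2$ over $A_0$ itself: write $B_0 = A_0/I^2 = k[\pi,u_1,\dots,u_{n-1}]/(\pi,u_1,\dots,u_{n-1})^2$, a local Artinian $k$-algebra with maximal ideal $\gothm$ satisfying $\gothm^2 = 0$, and let $G' = G^{\univ}\otimes_{A_0} B_0$, a deformation of $G_0$ to $B_0$. We must show $[\pi]_{G'}(T) = [\pi]_{G_0}(T)$ in $B_0\powerseries{T}$, where the latter is viewed in $B_0\powerseries{T}$ via $k \hookrightarrow B_0$. The key point is that the action of $\OO_K$ on the Lie algebra of $G^{\univ}$ is required, by the definition of formal $\OO_K$-module, to agree with the structural homomorphism $\OO_K \to A_0$; in particular the linear coefficient of $[\pi]_{G'}(T)$ is the image of $\pi$ in $B_0$, which is a nonzero element of $\gothm$. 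The next step is to observe that $B_0$ carries a $k$-linear splitting $B_0 = k \oplus \gothm$ as $k$-algebras (the natural one), so a deformation of $G_0$ over $B_0$ is classified by a cocycle in $\gothm \otimes_k (\text{tangent space of the deformation functor of }G_0)$; but the tangent space of the Lubin–Tate deformation functor at $G_0$ is exactly the $(n-1)$-dimensional space with coordinates $u_1,\dots,u_{n-1}$, and $B_0$ was built precisely so that this classifying cocycle is the ``tautological'' one.

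The substance of the argument is then: over a square-zero thickening, two formal $\OO_K$-module laws lifting $G_0$ with the same classifying tangent vector are $\OO_K$-linearly isomorphic by an isomorphism that is the identity modulo $\gothm$; and such an isomorphism conjugates $[\pi]_{G'}$ to $[\pi]_{G_0\otimes B_0}$ modulo $\gothm^2 = 0$, hence $[\pi]_{G'} = [\pi]_{G_0}$ up to conjugation by $1 + (\text{something in }\gothm\powerseries{T})$. But conjugating an $\FF_q$-linearizable power series (which $[\pi]_{G_0}$ is, after the standard coordinate change making $G_0$ additive — the excerpt already assumes the underlying $\FF_q$-vector space of $G$ is $\Ga$) by $1 + \gothm\powerseries{T}$ changes it only in $\gothm\powerseries{T}$, and I need to upgrade this to genuine equality rather than mere congruence. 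I expect the cleanest route is to use that Drinfeld's universal deformation can be normalized so that $[\pi]_{G^{\univ}}(T) = \pi T + (\text{higher order in }T)$ with the coefficient of $T^{q^j}$ for $1 \le j < n$ lying in the maximal ideal, and the coefficient of $T^{q^n}$ being a unit: differentiating the deformation in the $u_i$-directions at the origin moves only the coefficients of $T^{q^j}$, $1 \le j \le n-1$, and one checks these derivatives land in $I$ (first order) with no $I^2$ correction forced, because the $u_i$ were chosen as honest coordinates on a power series ring — i.e.\ the map $u_i \mapsto (\text{coefficient adjustment})$ is, by construction of the universal object, given by power series with no constant term, and the first-order part is captured exactly by $I/I^2$.

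The main obstacle, and the step I would spend the most care on, is making the last sentence precise: one must verify that the identification of $A_0$ with a power series ring in $\pi, u_1,\dots,u_{n-1}$ is compatible with the natural grading coming from $[\pi]_{G^{\univ}}$ in such a way that no ``cross term'' (a product of two elements of $I$) sneaks into $[\pi]_{G^{\univ}} - [\pi]_{G_0}$ at the $T^{q^j}$ coefficients. Equivalently: the universal deformation, restricted to $\Spec B_0$, is $k$-split in the strong sense that $[\pi]_{G'}(T) - [\pi]_{G_0}(T) \in \gothm\powerseries{T}$ with $\gothm^2 = 0$, AND the $T$-linear term of this difference is zero (forced by the Lie algebra condition), which lets an Artin-style approximation / explicit coordinate change remove the difference entirely. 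I would handle this by writing out $[\pi]_{G'}(T) = \sum_{j\ge 0} c_j T^{q^j}$ with $c_0 = \pi$, $c_j \in \gothm$ for $1 \le j \le n-1$, $c_n$ a unit, and using the functional equation $[\pi]_{G'}\circ_{G'}[\alpha]_{G'} = [\alpha]_{G'}\circ_{G'}[\pi]_{G'}$ for $\alpha \in \FF_q$ together with $\gothm^2 = 0$ to pin down the $c_j$ against their values for $G_0$; the $\FF_q$-linearity then forces, order by order in $T$, that each $c_j - (c_j)_{G_0}$ is killed, because the obstruction to matching lives in a space that the square-zero hypothesis collapses. Once $[\pi]_{G'} = [\pi]_{G_0}$ over $B_0$ is established, the lemma follows immediately by pulling back along $A_0 \to A_1 \hookrightarrow A$ and noting $I^2 A_1$ is the image of $I^2 A_0$.
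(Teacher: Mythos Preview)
There is a genuine gap. You conflate the ideal $I^2 \subset A_1$ with the extension of $I_0^2$ from $A_0$ (where $I_0 = (\pi, u_1, \dots, u_{n-1})$ is the maximal ideal of $A_0$). These are very different: the lemma works precisely because $I_0 A_1 \subset I^2$, so $I^2$ is much \emph{larger} than $I_0^2 A_1$. Your reduction ``write $B_0 = A_0/I^2 = k[\pi, u_1, \dots, u_{n-1}]/(\pi, u_1, \dots, u_{n-1})^2$'' makes this conflation explicit, and the statement you then try to prove---that $[\pi]_{G'}(T) = [\pi]_{G_0}(T)$ over $B_0$---is simply false. Take the standard model $[\pi]_{G^{\univ}}(T) = \pi T + u_1 T^q + \cdots + u_{n-1} T^{q^{n-1}} + T^{q^n}$: the difference $[\pi]_{G^{\univ}} - [\pi]_{G_0}$ has coefficients $\pi, u_1, \dots, u_{n-1}$, none of which lie in $I_0^2$. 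The deformation over $A_0/I_0^2$ is the universal first-order deformation, which is certainly not isomorphic to $G_0 \otimes_k B_0$; if it were, the deformation functor would have trivial tangent space. Your final sentence, ``noting $I^2 A_1$ is the image of $I^2 A_0$'', is exactly the wrong identification.

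The paper's argument runs in the opposite direction. One already knows $[\pi]_{G^{\univ}} - [\pi]_{G_0}$ has coefficients in $I_0$ (this is Eq.~\eqref{piGmodI}), so it suffices to show the inclusion of ideals $I_0 A_1 \subset I^2$. This is done via the moduli interpretation of $A_1/I_0 A_1$: it classifies Drinfeld bases $(x_1,\dots,x_n)$ for $G_0[\pi]$ over $k$-algebras, and the defining relations---coming from the divisibility of $\prod_{a\in\FF_q^n} (T - \sum_i a_i x_i)$ by $[\pi]_{G_0}(T)$---are homogeneous polynomials $U_j(X_1,\dots,X_n)$ of degree $q^n - q^j \geq 2$ in the $X_i = X_i^{(1)}$. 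Since $A_1 = k\powerseries{X_1^{(1)}, \dots, X_n^{(1)}}$ with $I = (X_1^{(1)}, \dots, X_n^{(1)})$, these $U_j$ generate $I_0 A_1$ and visibly lie in $I^2$. The content of the lemma, in other words, is a ramification statement about $A_0 \to A_1$: adding level-$\pi$ structure turns the deformation parameters $\pi, u_1,\dots,u_{n-1}$ into expressions of order $\geq 2$ in the new regular parameters $X_i^{(1)}$. Nothing about the first-order deformation theory over $A_0$ alone can see this.
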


\begin{proof} Let $I_0$ be the maximal ideal of $A_0$.  The congruence holds modulo $I_0\powerseries{T}$, so this is a matter of showing that $I_0\subset I^2$.  The quotient $A_1/I_0$ classifies Drinfeld bases for $G_0[\pi]$.  For a $k$-algebra $R$, an $n$-tuple $x_1,\dots,x_n\in R$ is a Drinfeld basis for $G_0[\pi](R)$ when the product
\[ \prod_{(a_1,\dots,a_n)}\left(T-\sum_i a_i x_i\right)=T^{q^n}+u_{n-1}T^{q^{n-1}}+\dots+u_0 \]
is divisible by $[\pi]_{G_0}(T)$ in $R\powerseries{T}$, which is true if and only if each of the coefficients $u_i=0$ for $i=0,\dots,n-1$.  Here we have $u_i=U_i(x_1,\dots,x_n)$ for a homogeneous polynomial $U_i\in k\powerseries{X_1,\dots,X_n}$ of degree at least 2.  We find that
\[ A_1/I_0 = k[X_1,\dots,X_n]/(U_1,\dots,U_{n-1}) \]
where $X_i=X_i^{(1)}$,  Since $I=(X_1,\dots,X_n)$, we find that $I_0\subset I^2$.
\end{proof}

Let
\[ Y_i = \lim_{m\to\infty} [\pi^m]_{G_0}\left(X_i^{(m-1)}\right) \in A. \]
By Lemma~\ref{modI2} we have $Y_i\in I$.  Note that $[\pi^m]_{G_0}(T)$ is a power series in $k\powerseries{T^{q^{mn}}}$.  It follows that $Y_i$ admits a system of $q^m$th roots $Y_i^{1/q^m}\in A$.  Let $\mathcal{I}\subset I$ be the ideal generated by the $Y_i$.
\begin{lemma} \label{II} $\mathcal{I}=I$.
\end{lemma}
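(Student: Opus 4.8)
The plan is to prove the nontrivial inclusion $I\subseteq\mathcal I$ by showing that each $Y_i$ agrees with $X_i^{(1)}$ modulo $I^2$, and then to conclude with Nakayama's lemma. Recall that the universal Drinfeld bases are compatible under multiplication by $\pi$, so that $X_i^{(1)}=[\pi^{m-1}]_{G^{\univ}}\!\left(X_i^{(m)}\right)$ in $A_m$ for every $m\ge 1$, and that $Y_i$ is obtained by the same recipe with $G^{\univ}$ replaced by its reduction $G_0$. Thus everything comes down to comparing the power series $[\pi^{m-1}]_{G^{\univ}}$ and $[\pi^{m-1}]_{G_0}$, for which I would use Lemma~\ref{modI2} repeatedly.

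First I would iterate Lemma~\ref{modI2} to obtain $[\pi^j]_{G^{\univ}}(T)\equiv[\pi^j]_{G_0}(T)\pmod{I^2\powerseries{T}}$ for all $j\ge 1$, by induction on $j$ (the case $j=1$ being Lemma~\ref{modI2} itself). The inductive step rests on the elementary remark that if $F,G,F',G'\in A_m\powerseries{T}$ all have zero constant term and $F\equiv G$, $F'\equiv G'\pmod{I^2\powerseries{T}}$, then $F\circ F'\equiv G\circ G'\pmod{I^2\powerseries{T}}$: indeed $F\circ F'-F\circ G'$ is a multiple of $F'-G'\in I^2\powerseries{T}$ in $A_m\powerseries{T}$, while $F\circ G'-G\circ G'=(F-G)\circ G'$ has all of its coefficients in $I^2$ because those of $F-G$ do and $G'$ has no constant term. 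Applying this with $F=[\pi]_{G^{\univ}}$, $G=[\pi]_{G_0}$, $F'=[\pi^j]_{G^{\univ}}$, $G'=[\pi^j]_{G_0}$ gives the case $j+1$.

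Next I would specialize $T=X_i^{(m)}$ in the $j=m-1$ congruence. Because the series $[\pi^{m-1}]_{G^{\univ}}-[\pi^{m-1}]_{G_0}$ has vanishing constant term, plugging in $X_i^{(m)}\in\mathfrak m_{A_m}$ produces a sum, convergent in the maximal-ideal-adic topology of $A_m$, of terms each lying in $I^2A_m$; since $A_m$ is Noetherian and local, $I^2A_m$ is closed in that topology by Krull's intersection theorem, so the sum lies in $I^2A_m\subseteq I^2A$. Hence $X_i^{(1)}-[\pi^{m-1}]_{G_0}\!\left(X_i^{(m)}\right)\in I^2A$ for every $m$. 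Letting $m\to\infty$ and using that $I^2A$ is closed in the $I$-adically complete ring $A$ (which is immediate, since $I^2+I^k=I^2$ for $k\ge 2$), I obtain $Y_i\equiv X_i^{(1)}\pmod{I^2}$.

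Since the elements $X_1^{(1)},\dots,X_n^{(1)}$ generate $I$, the congruences $Y_i\equiv X_i^{(1)}\pmod{I^2}$ give $\mathcal I+I^2=I$. Then $I/\mathcal I$ is a finitely generated $A$-module (because $I$ is generated by the $n$ elements $X_i^{(1)}$) satisfying $I\cdot(I/\mathcal I)=I/\mathcal I$. As $A$ is separated and complete for the $I$-adic topology, every element of $1+I$ is a unit, so $I$ lies in the Jacobson radical of $A$; Nakayama's lemma then forces $I/\mathcal I=0$, i.e.\ $\mathcal I=I$. The conceptual input is thus only Lemma~\ref{modI2} together with Nakayama's lemma; the one delicate point, and the one I expect to take the most care, is the bookkeeping in the previous paragraph, namely checking that the mod-$I^2$ congruence of power series survives both the evaluation at $X_i^{(m)}$ and the passage to infinite level, which is handled by the (easy) closedness of $I^2$ in the relevant adic topologies, itself a consequence of $I$ being finitely generated.
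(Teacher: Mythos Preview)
Your proof is correct and follows essentially the same approach as the paper: show $Y_i\equiv X_i^{(1)}\pmod{I^2}$ via Lemma~\ref{modI2}, deduce $I\subset\mathcal I+I^2$, and conclude with Nakayama using that $1+I\subset A^\times$. You are in fact more careful than the paper in two places---you spell out the induction giving $[\pi^j]_{G^{\univ}}\equiv[\pi^j]_{G_0}\pmod{I^2\powerseries{T}}$, and you justify why the congruence survives evaluation at $X_i^{(m)}$ and passage to the limit (closedness of $I^2$)---whereas the paper leaves these implicit.
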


\begin{proof} For $i=1,\dots,n$ we have $Y_i-[\pi^m]_{G_0}\left(X_i^{(m-1)}\right)\in I^2$ for all sufficiently large $m$.  By Lemma \ref{modI2},
$X_i^{(1)}=[\pi^m]_{G^{\univ}}\left(X_i^{(m-1)}\right)\equiv [\pi]_{G_0}\left(X_i^{(m-1)}\right)$ modulo $I^2$.  Thus $X_i^{(1)}- Y_i\in I^2$, and therefore $X_i^{(1)}\in \mathcal{I}+I^2$. Since the $X_i^{(1)}$ generate $I$, we have $I\subset \mathcal{I}+I^2$.

Since $I$ is a finitely generated ideal, $I/\mathcal{I}$ is a finitely generated $A$-module.  The inclusion $I\subset\mathcal{I}+I^2$ shows that $I\cdot (I/\mathcal{I})=I/\mathcal{I}$.  By Nakayama's lemma, $I/\mathcal{I}$ is annihilated by some element of $1+I$.  But $1+I\subset A^\times$, so that $I/\mathcal{I}=0$ and therefore $I=\mathcal{I}$.  
\end{proof}

\begin{prop}\label{Aperfect} $A$ is perfect.
\end{prop}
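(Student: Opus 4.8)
The plan is to prove that the Frobenius endomorphism $\tau\from A\to A$ is bijective. Injectivity is immediate: by Prop.~\ref{Areduced} the ring $A$ is reduced, and it has characteristic $p$. For surjectivity I will take $a\in A$ and construct a sequence $b_1,b_2,\dots\in A$ with $a-b_N^q\in I^N$, which is $I$-adically Cauchy; its limit will be a $q$th root of $a$. The two facts that drive the construction are already at hand: first, $A/I$ is \emph{perfect} — by Prop.~\ref{AmodI} it is a quotient of the perfect $\FF_q$-algebra $k[X_1^{1/q^\infty},\dots,X_n^{1/q^\infty}]$, and Frobenius is surjective on any quotient of a perfect ring (lift, extract a root, reduce); second, $I$ is generated by $Y_1,\dots,Y_n$ (Lemma~\ref{II}), and each $Y_i$ has a $q$th root $Y_i^{1/q}\in A$ (as noted just before Lemma~\ref{II}).

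I would build the $b_N$ by induction. For $N=1$, surjectivity of Frobenius on $A/I$ gives $b_1\in A$ with $a-b_1^q\in I$. For the inductive step, suppose $a-b_N^q\in I^N$; since $I=(Y_1,\dots,Y_n)$, the ideal $I^N$ is generated by the degree-$N$ monomials in the $Y_i$, so we may write $a-b_N^q=\sum_{\mathbf{j}}\alpha_{\mathbf{j}}\,Y_{j_1}\cdots Y_{j_N}$ with $\alpha_{\mathbf{j}}\in A$, the index $\mathbf{j}=(j_1,\dots,j_N)$ running over $\{1,\dots,n\}^N$. Writing each $\alpha_{\mathbf{j}}=\beta_{\mathbf{j}}^q+\gamma_{\mathbf{j}}$ with $\gamma_{\mathbf{j}}\in I$ (again using that $A/I$ is perfect) and using $Y_{j_1}\cdots Y_{j_N}=(Y_{j_1}^{1/q}\cdots Y_{j_N}^{1/q})^q$ together with the identity $(x+y)^q=x^q+y^q$ in characteristic $p$, one gets
\[ a-b_N^q=\Bigl(\sum_{\mathbf{j}}\beta_{\mathbf{j}}\,Y_{j_1}^{1/q}\cdots Y_{j_N}^{1/q}\Bigr)^{q}+\sum_{\mathbf{j}}\gamma_{\mathbf{j}}\,Y_{j_1}\cdots Y_{j_N}. \]
Setting $b_{N+1}=b_N+\sum_{\mathbf{j}}\beta_{\mathbf{j}}\,Y_{j_1}^{1/q}\cdots Y_{j_N}^{1/q}$ and expanding $b_{N+1}^q$ by the freshman's dream, we obtain $a-b_{N+1}^q=\sum_{\mathbf{j}}\gamma_{\mathbf{j}}\,Y_{j_1}\cdots Y_{j_N}\in I\cdot I^N=I^{N+1}$, and moreover $b_{N+1}-b_N\in\mathcal{J}^N$, where $\mathcal{J}=(Y_1^{1/q},\dots,Y_n^{1/q})\subset A$.

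To finish I need $(b_N)$ to converge $I$-adically. This comes from $\mathcal{J}^{[q]}=I$: the generators $(\sum_i a_iY_i^{1/q})^q=\sum_i a_i^qY_i$ of $\mathcal{J}^{[q]}$ lie in $I$, and conversely $Y_i=(Y_i^{1/q})^q\in\mathcal{J}^{[q]}$; since $\mathcal{J}$ has $n$ generators, $\mathcal{J}^{n(q-1)+1}\subseteq\mathcal{J}^{[q]}=I$ by pigeonhole, hence $\mathcal{J}^{(n(q-1)+1)\ell}\subseteq I^{\ell}$ for all $\ell$. Thus $b_{N+1}-b_N\in\mathcal{J}^N\to 0$ in the $I$-adic topology, so $(b_N)$ is $I$-adically Cauchy; as $A$ is $I$-adically complete it converges to some $b\in A$, and then $a-b^q=\lim_N\bigl((a-b_N^q)+(b_N^q-b^q)\bigr)=0$ since $a-b_N^q\in I^N$ and $b_N^q-b^q=(b_N-b)^q\to 0$. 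Hence $\tau$ is surjective and $A$ is perfect.

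I expect the only point requiring care to be the final convergence step — verifying that the $\mathcal{J}$-adic topology refines the $I$-adic one, which is what legitimizes passing to the limit — and the bookkeeping in the freshman's-dream manipulations (harmless in characteristic $p$). Everything else, namely the perfectness of $A/I$, the description $I=(Y_1,\dots,Y_n)$, and the existence of the roots $Y_i^{1/q}\in A$, is supplied by the preceding results.
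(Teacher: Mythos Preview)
Your proposal is correct and follows exactly the approach the paper takes: reducedness of $A$ for injectivity, perfectness of $A/I$ (Prop.~\ref{AmodI}), and the fact that $I=(Y_1,\dots,Y_n)$ with each $Y_i$ admitting $q$th roots (Lemma~\ref{II}). The paper's own proof simply lists these three ingredients and declares ``these facts suffice''; you have spelled out the successive-approximation argument that the paper leaves to the reader, including the convergence check via $\mathcal{J}^{n(q-1)+1}\subseteq\mathcal{J}^{[q]}=I$.
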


\begin{proof} $A$ is reduced (Prop. \ref{Areduced}), so it suffices to show that Frobenius is surjective on $A$.  From Prop. \ref{AmodI} we see that Frobenius is surjective on $A/I$.  From Lemma~\ref{II} we see that $I$ is generated by finitely many elements $Y_i$ which admit arbitrary $q$th power roots.  These facts suffice.
\end{proof}

Applying Prop. \ref{Aperfect} in the case of $n=1$ shows that the field $\hat{L}_\infty$ is perfect.

\subsection{The Frobenius map is surjective on $B$}
By Lemma~\ref{FormalVSadic}, we have isomorphisms of formal schemes
\[\tilde{G}^n\approx \Spf \OO_L\powerseries{X_1^{1/q^\infty},\dots,X_n^{1/q^\infty}}\]
and
\[\wedge\tilde{G}^n\approx \Spf \OO_L\powerseries{X^{1/q^\infty}}.\]
Therefore we have the following presentation of $B$:
\[ B=\OO_L\powerseries{X_1^{1/q^\infty},\dots,X_n^{1/q^\infty}}\hat{\otimes}_{\OO_L\powerseries{X^{1/q^\infty}}} \OO_{\hat{L}_\infty}. \]
Since $\OO_{\hat{L}_\infty}$ is a perfect $\OO_L$-algebra, it is clear from this presentation that the $q$th power Frobenius map is surjective on $B$.

\subsection{The isomorphism $B/J\to A/I$}

Let $G$ be an honest lift of $G_0$ to $\OO_L$, so that we may identify $\tilde{G}$ with the universal cover of $G$(cf. the discussion at the end of \S\ref{univcover}). Thus for $R\in\Nilp_{\OO_L}$ we have $\tilde{G}(R)=\varprojlim_\pi G(R)$.  For an element $x\in\tilde{G}(R)$, we write $(x^{(1)},x^{(2)},\dots)$ for the corresponding sequence of elements of $G(R)$.  Similarly, we have $\wedge G$, the unique lift of $\wedge G_0$;  we use a similar convention for elements of $\wedge \tilde{G}(R)=\widetilde{\wedge G}(R)$.

Recall that $\Spf B$ was defined as the fiber product of $\tilde{G}^n$ and $\mathcal{M}_{\wedge G_0}$ over $\wedge\tilde{G}$.  Thus for an object $R$ of $\Nilp_{\OO_L}$, $\Hom(B,R)$ is in bijection with the set of $n$-tuples $x_1,\dots,x_n\in\tilde{G}^n$ such that $\delta(x_1,\dots,x_n)$, a priori just an element of $\wedge\tilde{G}(R)$, actually lies in $\varprojlim \wedge G[\pi^m](R)$, with each $\delta(x_1,\dots,x_n)^{(m)}$ constituting a Drinfeld basis for $\wedge G[\pi^m](R)$.  This last condition is equivalent to the condition that $\delta(x_1,\dots,x_n)^{(1)}$ be a Drinfeld basis for $\wedge G[\pi](R)$.

Recall that $I\subset A$ is the ideal $(X_1^{(1)},\dots,X_n^{(1)})$.  This ideal contains $\pi$.  In the proof of Prop. \ref{AmodI}, we gave the following moduli interpretation for the closed subscheme $\Spec A/I\subset \Spf A$:  For a $k$-algebra $R$, $\Hom(A/I,R)$ is in bijection with the set of $n$-tuples of elements $x_1,\dots,x_n\in\varprojlim G_0(R)$ with $x_i^{(1)}=0$, $i=1,\dots,n$.

Let $J\subset B$ be the finitely generated ideal corresponding to the conditions $\pi=0$ and $x_i^{(1)}=0$, for $i=1,\dots,n$.  That is, for a $k$-algebra $R$, $\Hom(B/J,R)$ is in bijection with the set of $n$-tuples $x_1,\dots,x_n\in\tilde{G}^n$ such that $x_i^{(1)}=0$, with the additional condition that $\delta(x_1,\dots,x_n)^{(1)}$ be a Drinfeld basis for $\wedge G[\pi](R)$.

By Prop. \ref{detsagree}, we have
\[ \delta(x_1,\dots,x_n)^{(1)}=\mu_1(x_1^{(1)},\dots,x_n^{(1)})=0.\]
However, we have already seen (cf. the proof of Prop.~\ref{Iext}) that 0 constitutes a Drinfeld basis for $\wedge G[\pi](R)$ for any $k$-algebra $R$.  Thus the ``additional condition" of the previous paragraph is automatically satisfied.  From the functorial descriptions of $\Spec B/J$ and $\Spec A/I$ above, we see that $B/J\to A/I$ is an isomorphism.

\section{Some explicit calculations}

\subsection{The standard formal $F$-vector space}

\begin{defn}  Let $n\geq 1$.  The {\em standard formal $K$-vector space} of height $n$ over $k$ is the one-dimensional formal $K$-vector space $\V$ which arises from the continuous $\FF_q$-algebra homomorphism $K\to k\laurentseries{\pi}$ which sends $\pi$ to $\tau^n$.  Thus if $R$ is a $k$-algebra, $\V(R)$ is equal to the $K$-vector space $\varprojlim_\tau\Nil(R)$, where $\pi$ acts through $X\mapsto X^{q^n}$.
\end{defn}

Let $\V$ be the standard formal $K$-vector space of height $n$ over $\FF_q$.  Let $e\in M(V)$ represent the identity morphism in $\Hom_{\FF_q}(\V,\bG_{\FF_q}^{\flat})$.  Then $M(\V)$ has $K$-basis $\set{F^i(e)}$, $i=0,\dots,n-1$.  With respect to this basis, the matrix of $F$ is
\[
\begin{pmatrix}
0 & 1 & & & \\
& 0 & 1 & &\\
&   & \ddots  & & \\
& & & 0 & 1\\
\pi & & & &0
\end{pmatrix}
\]

Thus the top exterior power $\wedge M(\V)$ is an isocrystal of dimension 1, where $F$ acts as the scalar $(-1)^{n-1}\pi$.  Thus $\pi$ acts on $\wedge \V(R)=\varprojlim_\tau\Nil(R)$ by $X\mapsto (-1)^{n-1}X^q$.

Write $\varpi$ for the $q$th power Frobenius endomorphism of $\V$, so that $\varpi^n=\pi$.  Then the endomorphism algebra $\End \V\otimes\mathbf{F}_{q^n}$ is the noncommutative ring of power series $\sum_{m\gg -\infty} a_m\varpi^m$, with $a_m\in\FF_{q^n}$, subject to the rule $\varpi a=a^q\varpi$, for $a\in\FF_{q^n}$.  Call this algebra $D$:  it is the central division algebra over $K$ of invariant $1/n$.  Write $N\from D\to K$ for the reduced norm.

It will be useful to have an expression for the determinant map $\delta\from \V^n\to \wedge\V$.  For indeterminates $T_1,\dots,T_n$ in any $\FF_q$-algebra, let $\Delta(T_1,\dots,T_n)$ be the Moore determinant:
\[ \Delta(T_1,\dots,T_n) = \det\left(X_i^{q^j}\right). \]
Then $\Delta$ induces an $\FF_q$-alternating (but certainly not $K$-alternating) map $\V^n\to\wedge \V$ which we also call $\Delta$.
\begin{lemma}  The determinant map $\delta\from \V^n\to\wedge \V$ is given by
\[ \delta(x_1,\dots,x_n)=\sum_{(a_1,\dots,a_n)} \Delta\left(\pi^{a_1}x_1,\dots,\pi^{a_n}x_n\right). \]
Here the sum ranges over $n$-tuples of integers $(a_1,\dots,a_n)$ satisfying $\sum_i a_i= 0$.
\end{lemma}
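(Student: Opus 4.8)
The plan is to recognize the right-hand side as a $K$-alternating natural transformation $\V^n\to\wedge\V$ and then identify it with $\delta$ via the universal property of $\delta$. Write
\[ S(x_1,\dots,x_n)=\sum_{(a_1,\dots,a_n)}\Delta(\pi^{a_1}x_1,\dots,\pi^{a_n}x_n), \]
the sum over $(a_1,\dots,a_n)\in\Z^n$ with $\sum_i a_i=0$. First I would check that $\Delta$ itself is an $\FF_q$-alternating natural transformation $\V^n\to\wedge\V$: in coordinates, the $i$-th entry of $\Delta(x_1,\dots,x_n)$ in $\wedge\V(R)=\varprojlim_\tau\Nil(R)$ is $\det\bigl((x_l^{(i)})^{q^j}\bigr)$, and the identity $\det(z_l^{q^j})^q=\det\bigl((z_l^q)^{q^j}\bigr)$ (Frobenius being a ring endomorphism in characteristic $p$) makes these entries compatible along $\tau$. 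Then I would check that $S(x_1,\dots,x_n)$ still lies in $\varprojlim_\tau\Nil(R)$: modulo each stage of the inverse limit only finitely many tuples $(a_i)$ contribute, because $\pi$ acts on $\V$ as $\tau^n$ (that is, $X\mapsto X^{q^n}$), so any tuple with some $a_i$ large and positive forces an entire row of the relevant Moore matrix into a high power of $\Nil(R)$ and hence contributes $0$, while the constraint $\sum a_i=0$ then bounds the remaining $a_j$ from below. Since $S$ is obtained from $\Delta$ by a sum over an index set stable under simultaneously permuting the $x_i$ and their exponents, $S$ inherits $\FF_q$-multilinearity and the alternating property from $\Delta$.

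The substantive point is that $S$ is in fact $K$-multilinear. Since $K=\FF_q\laurentseries{\pi}$ and $\FF_q$-linearity is already in hand, it is enough to prove $S(\pi x_1,x_2,\dots,x_n)=\pi\,S(x_1,\dots,x_n)$. Reindexing $a_1\mapsto a_1-1$ rewrites the left side as $\sum_{\sum a_i=1}\Delta(\pi^{a_1}x_1,\dots,\pi^{a_n}x_n)$; so, writing $S_m=\sum_{\sum a_i=m}\Delta(\pi^{a_1}x_1,\dots,\pi^{a_n}x_n)$, the claim is the identity $S_1=\pi\,S_0$ in $\wedge\V$. I would prove this from the explicit action of $\pi$ on $\wedge\V(R)$ (it acts as $X\mapsto(-1)^{n-1}X^q$ on $\varprojlim_\tau\Nil(R)$), the Frobenius-semilinearity $\Delta(T_1^q,\dots,T_n^q)=\Delta(T_1,\dots,T_n)^q$ of the Moore determinant, and an elementary identity that extracts one factor of $\pi$ from a single column of a Moore matrix; the mechanism is that the single extra power of $\pi$ imposed by the constraint $\sum a_i=1$ detaches from the whole sum uniformly. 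This combinatorial identity for sums of Moore determinants, together with the attendant sign bookkeeping, is the step I expect to be the real obstacle.

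Granting $K$-multilinearity, $S\colon\V^n\to\wedge\V$ is a $K$-alternating natural transformation of formal $K$-vector spaces, so by the universal property of the determinant map $\delta$ from \S\ref{multilinear} there is a unique morphism $c\colon\wedge\V\to\wedge\V$ with $S=c\circ\delta$. To conclude $c=\mathrm{id}$ it is enough to compare the two sides on one convenient $n$-tuple of sections, for instance the tautological sections over an appropriate universal $\FF_q$-algebra: there the $(a_1,\dots,a_n)=(0,\dots,0)$ term of $S$ contributes the basic Moore determinant $\det(x_i^{q^j})$ as leading term, the remaining terms are of strictly higher order, and $\delta$ produces the same leading term; hence $c$ acts as the identity to leading order on every stage of $\varprojlim_\tau\Nil(R)$, and therefore $c=\mathrm{id}$.

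One can also compute $\delta$ directly, bypassing universality. Unwinding Thm.~\ref{detexistence} (the map \eqref{lambdaR}) and Lemma~\ref{Mstar}, with $f\in M^*(\V)$ dual to $e$ and $x_j^{(i)}$ the $\varprojlim_\tau$-coordinates of $x_j$, one gets
\[ \delta(x_1,\dots,x_n)=\sum_{i_1,\dots,i_n\ge0}\bigl(F^{-i_1}(f)\wedge\cdots\wedge F^{-i_n}(f)\bigr)\otimes x_1^{(i_1)}\cdots x_n^{(i_n)}. \]
Because $F$ cyclically permutes the basis $e,F(e),\dots,F^{n-1}(e)$ of $M(\V)$ up to the scalar $\pi$, its inverse does the same on $M^*(\V)$, so $F^{-i}(f)$ equals, up to sign, $\pi^{\ceil{i/n}}$ times a basis vector depending only on $i\bmod n$; a summand vanishes unless $i_1,\dots,i_n$ are pairwise distinct modulo $n$, and collecting the surviving terms by the common value of $\ceil{i_j/n}$, absorbing the resulting power of $\pi$ through the $\pi$-action on $\wedge\V$, and reindexing yields a sum of Moore determinants which, once the normalization is tracked, is exactly $\sum_{\sum a_i=0}\Delta(\pi^{a_1}x_1,\dots,\pi^{a_n}x_n)$. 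Either way, the least routine ingredient is the combinatorial manipulation showing that symmetrizing the Moore determinant over the tuples with $\sum a_i=0$ upgrades its $\FF_q$-multilinearity to genuine $K$-multilinearity.
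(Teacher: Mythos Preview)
The paper states this lemma without proof, so there is nothing to compare against directly. However, the paper does unwind the isocrystal description of $\delta$ later (in \S5.2, where it writes $v(z)=\sum_{i\in\Z}F^ie\otimes z^{q^i}$ and identifies $\delta(X_1,\dots,X_n)$ as the coefficient of $f=e\wedge Fe\wedge\cdots\wedge F^{n-1}e$ in $v(X_1)\wedge\cdots\wedge v(X_n)$). Your second approach---computing $\delta$ straight from Thm.~\ref{detexistence} and Lemma~\ref{Mstar}, using $F^ne=\pi^{-1}e$ in $M^*(\V)$ to reduce each $F^{i_j}e$ to $\pi^{-a_j}F^{r_j}e$ and then recognizing the surviving terms as Moore determinants---is exactly this, and is the cleanest route. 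The reindexing you sketch (group terms by the common value $a_j=\lfloor i_j/n\rfloor$, keep only those $n$-tuples whose residues $r_j$ are a permutation of $0,\dots,n-1$, track the sign $\mathrm{sgn}(\sigma)$ against the permutation) does give the stated sum over $\sum a_i=0$, and the sign works out because the Moore determinant already carries $\mathrm{sgn}(\sigma)$.

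Your first approach is also sound in principle, but the step ``$\delta$ produces the same leading term'' is circular as written: you cannot know the leading term of $\delta$ without computing it from its definition, which is precisely approach~2. A non-circular fix is to note that $\End_K(\wedge\V)=K$ (height one), so $c\in K$, and then compute what $S$ induces on the isocrystal side via the recipe in the proof of Thm.~\ref{detexistence}; one finds that $S$ corresponds to the canonical map $M^*(\V)^n\to\wedge^n M^*(\V)$, hence $c=1$. But at that point you have essentially done approach~2 anyway. The combinatorial verification of $K$-multilinearity ($S_1=\pi S_0$) that you flag as the real obstacle is genuine but routine once you write everything in terms of powers of $\tau$; it is worth noting that it follows for free from approach~2, since the isocrystal construction builds in $K$-linearity automatically.
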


Finally, note that $\tilde{G}^n$ has an action of the product $M_n(K)\times D$.  For a given element $(g,b)\in M_n(K)\times D$, let $x=\det(g)N(b)$.  Considering the action on the coordinate module of $\tilde{G}^n$, we see that the diagram
\begin{equation}
\label{gbcommutes}
\xymatrix{
\tilde{G}^n \ar[d]_{\delta} \ar[r]^{(g,b)} & \tilde{G}^n \ar[d]^{\delta} \\
\wedge\tilde{G} \ar[r]_{x} & \wedge\tilde{G}
}
\end{equation}
commutes.

\subsection{The standard formal $\OO_K$-module, and its universal deformation}
\label{determinantbyhand}

Let $G_0$ be the formal $\OO_K$-module over $\overline{\FF}_q$ whose underlying $\FF_q$-module is $\hat{\mathbf{G}}_a$ and for which multiplication by $\pi$ is $[\pi]_{G_0}(X)=X^{q^n}$.  Then $G_0$ has height $n$.  Then $\OO_D=\End G_0$ is the ring of integers in a division algebra $D/K$ of invariant $1/n$.

We have $A_0=\overline{\FF}_q\powerseries{u_1,\dots,u_{n-1}}$.  The universal deformation $G^{\univ}$ of $G_0$ is the one for which
\[ [\pi]_{G^{\univ}}(X) = \pi X  + u_1X^q + \dots + u_{n-1}X^{q^{n-1}} + X^{q^n}. \]

In this situation we can prove Prop. \ref{mu_m} ``by hand".  It suffices to construct the compatible family of alternating forms $\mu_m$ on the universal deformation $G^{\univ}$.  Let $\wedge G$ be the one-dimensional formal $\OO_K$-module over $\OO_K$ defined by
\[ [\pi]_{\wedge G}(X)=\pi X + (-1)^{n-1} X^q. \]
The Moore determinant determines an $\FF_q$-alternating map $G^n\to \wedge G$ which we also call $\Delta$.
Then the desired form $\mu_m$ is
\[ \mu_m(X_1,\dots,X_n)=\sum_{(a_1,\dots,a_n)} M\left(\pi^{a_1}X_1,\dots,\pi^{a_n}X_n\right), \]
where the sum runs over tuples of integers $(a_1,\dots,a_n)$ with $0\leq a_i< m$ and $\sum_i a_i=(m-1)(n-1)$.
The claims about $\mu_m$ appearing in Prop.~\ref{mu_m} are established in \cite{Wei}, Lemma 3.4 and Prop. 3.7.

For each $m\geq 1$, let $L_m$ be the splitting field of $[\pi^m]_{\wedge G}(X)$ over $L=\hat{K}^{\nr}=\overline{\FF}_q\laurentseries{\pi}$.  Let $t_m\in \OO_{L_m}$ be a primitive root of $[\pi^m]_{\wedge G}(X)$.
Choose these so that $[\pi]_{\wedge G}(t_m)=t_{m-1}$.  
Let $\hat{L}_\infty$ be the completion of $L_\infty =\bigcup_{m\geq 1} L_m$.  By Lubin-Tate theory, $\hat{L}_\infty=\hat{K}^{\ab}$, the completion of the maximal abelian extension of $K$. Then the limit $\lim_{m\to \infty} (-1)^{(n-1)m} t_m^{q^m}$ converges to an element $t\in \OO_{\hat{L}_\infty}$, all of whose $q$th power roots lie in $\OO_{\hat{L}_\infty}$.
Then $\OO_{\hat{L}_\infty}=\overline{\FF}_q\powerseries{t^{1/q^\infty}}$.

Thm.~\ref{mainthm} translates into the following description of $A$.
\begin{Theorem}  We have an isomorphism of topological $\overline{\FF}_q$-algebras
\[ \overline{\FF}_q\powerseries{X_1^{1/q^\infty},\dots,X_n^{1/q^\infty}}\to A \]
which sends $X_i$ to $\lim_{i\to\infty} \left(X_i^{(m)}\right)^{q^m}$.
\end{Theorem}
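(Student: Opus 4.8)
The plan is to obtain this as the explicit form of Theorem~\ref{mainthm} for the standard $G_0$, for which $[\pi]_{G_0}(X)=X^{q^n}$; everything reduces to making each vertex of the Cartesian square \eqref{Mdiagram} completely concrete. First I would identify $\tilde{G}$. Because $[\pi]_{G_0}(X)=X^{q^n}$ is exactly the $n$-fold Frobenius isogeny, the factorization $\pi=F^nu$ of Proposition~\ref{IsKvs} has $u=\mathrm{id}$, so $\tilde{G}_0$ is the standard formal $K$-vector space $\V$ of height $n$ over $\overline{\FF}_q$; being one-dimensional, its lift to $\OO_L=\overline{\FF}_q\powerseries{\pi}$ satisfies $\tilde{G}\isom\Spf\OO_L\powerseries{X^{1/q^\infty}}$ by Lemma~\ref{FormalVSadic}, hence $\tilde{G}^n\isom\Spf\OO_L\powerseries{X_1^{1/q^\infty},\dots,X_n^{1/q^\infty}}$ and $\wedge\tilde{G}\isom\Spf\OO_L\powerseries{X^{1/q^\infty}}$, and under these coordinates the bottom map $\delta$ of \eqref{Mdiagram} is $X\mapsto\delta(X_1,\dots,X_n)$, the lift (as in \S\ref{FormalVSoverAdic}) of the Moore-determinant formula of the preceding subsection. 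Next, $\wedge G_0$ has height one; applying the analysis of the height-one case above to $\wedge G_0$ gives $\mathcal{M}_{\wedge G_0}=\Spf\OO_{\hat{L}_\infty}=\Spf\overline{\FF}_q\powerseries{t^{1/q^\infty}}$, and the vertical map $\mathcal{M}_{\wedge G_0}\to\wedge\tilde{G}$, which records the compatible tower of Drinfeld bases $(t_1,t_2,\dots)$, sends $X$ to $t=\lim_m(-1)^{(n-1)m}t_m^{q^m}$ (this is exactly why $t$ was so defined) and $\pi$ to $\pi$.

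Feeding these into Theorem~\ref{mainthm} presents $A$ as $\OO_L\powerseries{X_1^{1/q^\infty},\dots,X_n^{1/q^\infty}}\,\hat{\otimes}_{\OO_L\powerseries{X^{1/q^\infty}}}\,\OO_{\hat{L}_\infty}$, the structure maps out of $\OO_L\powerseries{X^{1/q^\infty}}$ being $X\mapsto\delta(X_1,\dots,X_n)$ and $X\mapsto t$. In this completed tensor product the two images of $X$ coincide, so $t=\delta(X_1,\dots,X_n)$, while inside $\OO_{\hat{L}_\infty}=\overline{\FF}_q\powerseries{t^{1/q^\infty}}$ the uniformizer is some fractional power series $\pi=\pi(t)$ with zero constant term (it is topologically nilpotent). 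Thus $A$ acquires the presentation $\OO_L\powerseries{X_1^{1/q^\infty},\dots,X_n^{1/q^\infty}}/\overline{\bigl(\pi-\pi(\delta(X_1,\dots,X_n))\bigr)}$, where $\pi(\delta(X_1,\dots,X_n))$ means the result of substituting $t=\delta(X_1,\dots,X_n)$ into the power series expressing $\pi$ over $\OO_{\hat{L}_\infty}$ (this converges, as $\delta(X_1,\dots,X_n)$ lies in the maximal ideal, it being the image of a coordinate under a morphism of formal $\OO_L$-schemes). The crux will be to eliminate $\pi$: a fixed-point / order-of-vanishing argument in this complete local ring should show that the equation $\pi=\pi(\delta(X_1,\dots,X_n))$ has a unique solution $\pi=\Pi(X_1,\dots,X_n)\in\bigl(X_1^{1/q^\infty},\dots,X_n^{1/q^\infty}\bigr)$ and that this relation generates the principal ideal $\bigl(\pi-\Pi(X_1,\dots,X_n)\bigr)$; since $\OO_L=\overline{\FF}_q\powerseries{\pi}$, quotienting by that ideal collapses the presentation to $\overline{\FF}_q\powerseries{X_1^{1/q^\infty},\dots,X_n^{1/q^\infty}}$. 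No closed form for $\Pi$ should be expected --- this is the phenomenon flagged for $n=1$ in the introduction.

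Finally I would read off the image of $X_i$ by tracking the universal Drinfeld basis. The element $x_i=(X_i^{(1)},X_i^{(2)},\dots)\in\tilde{G}^{\univ}(A)$ has, under the identification $\tilde{G}^n\isom\Spf\OO_L\powerseries{X_1^{1/q^\infty},\dots,X_n^{1/q^\infty}}$ of Lemma~\ref{FormalVSadic} --- which goes through the Teichm\"uller lift --- its $i$th coordinate equal to the Teichm\"uller lift of the appropriate component of $x_i\bmod I$; because $[\pi]_{G^{\univ}}(T)\equiv T^{q^n}\pmod{I}$ (Lemma~\ref{modI2}), that component is the limit of the successive $q$-power twists of the $X_i^{(m)}$, which gives the displayed formula. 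Apart from this coordinate bookkeeping and the identification of the square, the only step that demands genuine computation --- and hence the expected main obstacle --- is the collapse of the fibered product, i.e.\ the solvability for $\pi$ of the equation $\pi=\pi(\delta(X_1,\dots,X_n))$ via an estimate on orders of vanishing.
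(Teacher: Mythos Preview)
Your proposal is correct and follows essentially the same route as the paper: identify the three corners of the Cartesian square explicitly, write $A$ as the completed tensor product, observe that the kernel of $\OO_L\powerseries{X^{1/q^\infty}}\twoheadrightarrow\OO_{\hat{L}_\infty}$ is generated by $\pi-g(X)$ for a fractional power series $g$, and then collapse $\OO_L\powerseries{X_1^{1/q^\infty},\dots,X_n^{1/q^\infty}}/(\pi-g(\delta))$ to $\overline{\FF}_q\powerseries{X_1^{1/q^\infty},\dots,X_n^{1/q^\infty}}$.

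One remark: the obstacle you flag at the end does not actually arise. The expression $\delta(X_1,\dots,X_n)=\sum_{a_1+\dots+a_n=0}\Delta(X_1^{q^{a_1n}},\dots,X_n^{q^{a_nn}})$ already lives in $\overline{\FF}_q\powerseries{X_1^{1/q^\infty},\dots,X_n^{1/q^\infty}}$ with no $\pi$ in it, so $g(\delta(X_1,\dots,X_n))$ is immediately a well-defined element of that ring and the relation $\pi=g(\delta)$ eliminates $\pi$ by direct substitution; no fixed-point or order-of-vanishing argument is needed.
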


\begin{proof} By Thm.~\ref{mainthm},  we have an isomorphism of complete local $\OO_L$-algebras
\[ \OO_L\powerseries{X_1^{1/q^\infty},\dots,X_n^{1/q^\infty}}\hat{\otimes}_{\OO_L\powerseries{X^{1/q^\infty}}} \OO_{\hat{L}_\infty}\to A,\]
where
\begin{itemize}
\item The map $\OO_L\powerseries{X^{1/q^\infty}}\to \OO_L\powerseries{X_1^{1/q^\infty},\dots,X_n^{1/q^\infty}}$ sends $X$ to
\begin{equation}
\label{deltadef}
\delta(X_1,\dots,X_n)=\sum_{a_1+\dots+a_n=0} \Delta\left(X_1^{q^{a_1n}},\dots,X_n^{q^{a_nn}}\right), 
\end{equation}
\item The map $\OO_L\powerseries{X^{1/q^\infty}}\to \OO_{\hat{L}_\infty}$ sends $X$ to $t$,
\item The map into $A$ sends $X_i\otimes 1$ to $\lim_{m\to\infty} \left(X_i^{(m)}\right)^{q^m}$.
\end{itemize}
The map $\OO_L\powerseries{X^{1/q^\infty}}\to \OO_{\hat{L}_\infty}$ is surjective with kernel generated by $\pi-g(X)$ for some fractional power series $g(X)\in \overline{\FF}_q\powerseries{X^{1/q^\infty}}$ without constant term.   Thus
\begin{eqnarray*}
A &\isom& \OO_L\powerseries{X_1^{1/q^\infty},\dots,X_n^{1/q^\infty}}/\left(\pi - g(\delta(X_1,\dots,X_n)\right) \\
&=& \overline{\FF}_q\powerseries{X_1^{1/q^\infty},\dots,X_n^{1/q^\infty}}.
\end{eqnarray*}
\end{proof}

\subsection{The group actions}
\label{groupactions}
Let us now assume that $G_0$ is the (unique) formal $\OO_K$-module of dimension 1 and height $n$ over $\overline{\FF}_q$.   As before, let $A_m$ be the local ring governing deformations of $G_0$ with Drinfeld $\pi^m$ level structures, and let $A$ be the completion of $\varinjlim A_m$ with respect to the maximal ideal of $A_1$.  Then $A$ is a complete local $\OO_{\hat{K}^{\nr}}$-algebra, where $\hat{K}^{\nr}=\overline{\FF}_q\laurentseries{\pi}$.  Let $\mathcal{M}=\Spf A$.  Thm.~\ref{mainthm} shows that $\mathcal{M}$ is the fiber product
\[
\xymatrix{
\mathcal{M} \ar[r] \ar[d] & \mathcal{M}_{\wedge G_0} \ar[d] \\
\tilde{G}^n \ar[r]_{\delta} & \wedge \tilde{G}
}
\]
Let $\OO_D=\End G_0$, so that $D=\OO_D[1/\pi]$ is the division algebra of invariant $1/n$.   Then $\tilde{G}^n$, being a formal $K$-vector space, admits a left action of $\GL_n(K)\times D^\times$.  Since we are ultimately interested in the actions of groups on cohomology, where we would prefer left actions, we use the transpose on $\GL_n(K)$ and the inversion map on $D^\times$ to get a right action of $\GL_n(K)\times D^\times$ on $\tilde{G}^n$.  Let $(\GL_n(K)\times D^\times)^{(0)}$ be the subgroup $\GL_n(K)\times D^\times$ consisting of pairs $(g,b)$ for which $\det(g)N(b)^{-1}\in \OO_K^\times$.  By Eq. \eqref{gbcommutes}, we get an action of $(\GL_n(K)\times D^\times)^{(0)}$ on $\mathcal{M}$, which is fibered over the action of $\OO_K^\times$ on $\mathcal{M}_{\wedge G_0}=\Spf \OO_{\hat{K}^{\ab}}$ via the map $(g,b)\mapsto \det(g)N(b)^{-1}$.

Let $\C$ be the completion of an algebraic closure of $\hat{K}^{\nr}$, and consider $\mathcal{M}_{\OO_{\C}}=\Spf A\hat{\otimes}_{\OO_{\hat{K}^{\nr}}} \OO_{\C}$.  Then of course $\mathcal{M}_{\OO_{\C}}$ admits a right action of $(\GL_n(K)\times D^\times)^{(0)}\times I_K$, where $I_K=\Gal(\overline{K}/K^{\nr})$ is the inertia group of $K$.  But in fact $\mathcal{M}_{\OO_{\C}}$ admits an action of the larger group $(\GL_n(K)\times D^\times \times W_K)^{(0)}$, which we define as the kernel of the map
\begin{eqnarray*}
\GL_n(K)\times D^\times\times W_K&\to& \Z \\
(g,b,w) &\mapsto& v\left(\det(g)N(b)^{-1}\chi(w)^{-1}\right),
\end{eqnarray*}
where $\chi\from W_K^{\ab}\to K^\times$ is the inverse of the isomorphism of local class field theory and $v$ is the valuation on $K$.  This action is defined as follows.  If $w\in W_K$ is an element lying over the $q^m$th power Frobenius map, then $w$ induces a morphism of formal schemes $\tilde{G}_{\OO_C}\to \tilde{G}^{(q^m)}_{\OO_C}$ which lies over $w\from \Spf \OO_{\C}\to\Spf\OO_{\C}$.   On the other hand, the $q^m$th power Frobenius isogeny $G_0\to G_0^{(q^m)}$ induces an isomorphism $\tilde{G}_{\OO_C}\to\tilde{G}^{(q^m)}_{\OO_C}$ of formal schemes over $\Spf \OO_C$.  Composing the first map with the inverse of the second gives a map $\tilde{G}_{\OO_C}\to\tilde{G}_{\OO_C}$.   Explicitly, the action of $w$ on $\tilde{G}_{\OO_C}=\Spf\OO_C\powerseries{X^{1/q^\infty}}$ is as follows:
it acts on the scalar ring $\OO_C$ via its natural action, and it sends $X$ to $X^{q^{-m}}$.  Then $\tilde{G}_{\OO_C}^n$ gets an action of the triple product group $\GL_n(K)\times D^\times\times W_K$;  a triple $(g,b,w)$ preserves the formal subscheme $\mathcal{M}_{\OO_{\C}}$ exactly when $\det(g)N(b)^{-1}\chi(w)^{-1}\in \OO_K^\times$.

The existence of the map $\mathcal{M}\to\Spf\OO_{\hat{K}^{\ab}}$ shows that $\mathcal{M}_{\OO_{\C}}$ is fibered over $\Spf\OO_{\hat{K}^{\ab}}\hat{\otimes}_{\OO_{\hat{K}^{\nr}}}\OO_{\C}$.
Let $\mathcal{E}$ be the set of continuous $\hat{K}^{\nr}$-linear field embeddings $\hat{K}^{\ab}\to \C$.  Then $\mathcal{E}$ is a principal homogeneous space for $\Gal(K^{\ab}/K^{\nr})\isom \OO_K^\times$, and as such it has a natural topology.  Let $C(\mathcal{E},\OO_{\C})$ be the $\OO_{\C}$-algebra of continuous maps from $\mathcal{E}$ into $\OO_{\C}$. We have an isomorphism of $\OO_C$-algebras
\begin{eqnarray*}
\OO_{\hat{K}^{\ab}}\hat{\otimes}_{\OO_{\hat{K}^{\nr}}} \OO_{\C} &\to& C(\mathcal{E},\OO_C) \\
\alpha\otimes \beta &\mapsto& \left(\sigma\mapsto \beta\sigma(\alpha)\right) 
\end{eqnarray*}

Choose a $\hat{K}^{\nr}$-linear embedding $\hat{K}^{\ab}\to \C$ and let $A^\circ=A\hat{\otimes}_{\OO_{\hat{K}^{\ab}}}\OO_{\C}$.  Then $A\hat{\otimes}_{\OO_{\hat{K}^{\nr}}}\OO_{\C}$ may be identified with $C(\mathcal{E},A^\circ)$.
Note that $A^\circ$ has an action of $(\GL_n(K)\times D^\times \times W_K)^{\circ}$, the group of triples $(g,b,w)$ with $\det(g)N(b)^{-1}\chi(w)^{-1}=1$.

From the description of $A$ in Thm.~\ref{mainthm}, we have an isomorphism
\begin{equation}
\label{Acirc}
A^\circ \isom \OO_\C\powerseries{X_1^{1/q^\infty},\dots,X_n^{1/q^\infty}}/\left(\delta(X_1,\dots,X_n)^{1/q^m}-t^{1/q^m}\right)_{m\geq 1},
\end{equation}
where $\delta(X_1,\dots,X_n)$ is the element of Eq.~\eqref{deltadef}.  

\section{The Lubin-Tate perfectoid space, and special affinoids}
\label{affinoids}

Let $\mathcal{M}^{\ad}$ be the adic space associated to the formal scheme $\mathcal{M}$.  That is, $\mathcal{M}^{ad}$ is the set of continuous valuations on $A\approx \overline{\FF}_q\powerseries{X_1^{1/q^\infty},\dots,X_n^{1/q^\infty}}$.  The existence of the map $\OO_{\hat{K}^{\ab}}\to A$ shows that $\mathcal{M}^{\ad}$ is fibered over $\Spa(\OO_{\hat{K}^{\ab}},\OO_{\hat{K}^{\ab}})$.  Let $\eta=\Spa(\hat{K}^{\ab},\OO_{\hat{K}^{\ab}})$ be the generic point of $\Spa(\OO_{\hat{K}^{\ab}},\OO_{\hat{K}^{\ab}})$, and let $\mathcal{M}^{\ad}_\eta$ be the generic fiber.  Thus $\mathcal{M}^{\ad}_{\eta}$ is the set of continuous valuations $\abs{\;}$ on $A$ for which $\abs{\pi}\neq 0$.

The field $\hat{K}^{\ab}$ is a complete and perfect nonarchimedean field whose valuation is rank 1 and non-discrete.  It is thus an example of a {\em perfectoid field}, cf. \cite{Sch}, Defn. 3.1.

\begin{lemma} $\mathcal{M}^{\ad}_{\eta}$ is covered by affinoids of the form $\Spa(R,R^+)$, where $R$ is a perfect Banach $\hat{K}^{\ab}$-algebra whose set of power-bounded elements is open and bounded.
\end{lemma}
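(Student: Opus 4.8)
The plan is to realize $\mathcal{M}^{\ad}_{\eta}$ as an increasing union of rational subdomains on each of which the ring of functions is visibly a perfectoid $\hat{K}^{\ab}$-algebra. By Lemma~\ref{II} the ideal of definition $I$ of $A$ is generated by finitely many elements $Y_1,\dots,Y_n$, each of which (by the discussion preceding that lemma) admits a compatible system of $q$-power roots in $A$, and recall $\pi\in I$. Since $A$ is perfect (Prop.~\ref{Aperfect}) the elements $\pi^{1/q^{c}}$ likewise lie in $A$, so for each $c\ge 0$ I would form the rational subdomain
\[
U_c=\set{x\in\mathcal{M}^{\ad}\ :\ \abs{Y_i(x)}\le\abs{\pi^{1/q^{c}}(x)}\ne 0,\ i=1,\dots,n},
\]
which is legitimate (its numerators $Y_1,\dots,Y_n$ together with the denominator $\pi^{1/q^{c}}$ generate an ideal containing the open ideal $I$) and is contained in $\mathcal{M}^{\ad}_{\eta}$ because $\pi$ is invertible on it.

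First I would check that $\mathcal{M}^{\ad}_{\eta}=\bigcup_{c\ge 0}U_c$. Let $x$ be a continuous valuation on $A$ with $\abs{\pi(x)}\ne 0$. Since each $Y_i$ lies in the ideal of definition, continuity of $x$ forces that for every $\gamma$ in the value group of $x$ there is an $N$ with $\abs{Y_i(x)}^{N}\le\gamma$; taking $\gamma=\abs{\pi(x)}$ yields exponents $N_1,\dots,N_n$ with $\abs{Y_i(x)}^{N_i}\le\abs{\pi(x)}$, and hence $x\in U_c$ as soon as $q^{c}\ge\max_i N_i$. (This is the same comparison of the $I$-adic and $(\pi)$-adic topologies already used at the end of the proof of Theorem~\ref{mainthm}.)

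Next I would identify the ring of functions $R_c$ on $U_c$. With $\varpi=\pi^{1/q^{c}}$, which lies in $\OO_{\hat{K}^{\ab}}$ and is a pseudo-uniformizer of $\hat{K}^{\ab}$ (the perfectoid field $\hat{K}^{\ab}$ is perfect), one has $R_c=R_c^{+}[1/\varpi]$, where $R_c^{+}=A\langle Y_1/\varpi,\dots,Y_n/\varpi\rangle$ is the $\varpi$-adic completion of $A[Y_1/\varpi,\dots,Y_n/\varpi]$; thus $R_c$ is a complete Tate $\hat{K}^{\ab}$-algebra with ring of definition $R_c^{+}$. That Frobenius is surjective on $R_c$ is inherited from $A$: by Prop.~\ref{Aperfect} Frobenius is surjective on $A$, and on $U_c$ each element $Y_i^{1/q}/\varpi^{1/q}=Y_i^{1/q}/\pi^{1/q^{c+1}}$ is power-bounded — this is exactly the defining inequality of $U_c$ — so already lies in $R_c^{+}$; these two facts give surjectivity of Frobenius on $R_c^{+}$, hence on $R_c$.

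The remaining point — which I expect to be the only real obstacle — is that $R_c$ is perfect (equivalently, reduced) and that its subring $R_c^{\circ}$ of power-bounded elements is open and bounded; equivalently, that $R_c$ is a perfectoid $\hat{K}^{\ab}$-algebra, with in fact $R_c^{\circ}=R_c^{+}$. A hands-on proof is unpleasant since $A$ is far from Noetherian, so I would instead appeal to the structure of $\mathcal{M}^{\ad}_{\eta}$ coming from Theorem~\ref{mainthm}: over $\hat{K}^{\ab}$, that theorem presents $\mathcal{M}^{\ad}_{\eta}$ as the fiber product of the adic generic fibers $(\tilde{G}^{n})^{\ad}_{\eta}$ and $\mathcal{M}^{\ad}_{\wedge G_0,\eta}=\Spa(\hat{K}^{\ab},\OO_{\hat{K}^{\ab}})$ over $(\wedge\tilde{G})^{\ad}_{\eta}$. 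By Lemma~\ref{FormalVSadic} the generic fibers of $\tilde{G}^{n}$ and $\wedge\tilde{G}$ are perfectoid open polydisks over the perfectoid field $\hat{K}^{\ab}$, while $\mathcal{M}^{\ad}_{\wedge G_0,\eta}$ is a perfectoid point; since a fiber product of perfectoid spaces is perfectoid \cite{Sch}, $\mathcal{M}^{\ad}_{\eta}$ is a perfectoid space. Every rational subdomain of it, and in particular each $U_c$, is therefore of the form $\Spa(R_c,R_c^{+})$ with $R_c$ a perfectoid $\hat{K}^{\ab}$-algebra, i.e.\ a perfect Banach $\hat{K}^{\ab}$-algebra whose power-bounded elements form an open and bounded subring, as claimed.
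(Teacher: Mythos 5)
Your covering and the identification of the rings $R_c$ match the paper's proof: the rational subsets $\abs{Y_i}\le\abs{\pi}^{1/q^{c}}$ are (up to relabeling generators of $I$ and writing $N=q^{c}$) the paper's subsets $\abs{X_i}^N\le\abs{\pi}$, and your verifications that these exhaust $\mathcal{M}^{\ad}_{\eta}$ and that Frobenius is surjective on $R_c^{+}$ are correct elaborations of what the paper leaves implicit. The divergence — and the gap — is in your final paragraph. The paper establishes the uniformity statement (power-bounded elements open and bounded) directly from the structure of $A$: perfect, complete, flat over $\OO_{\hat{K}^{\ab}}$, with a finitely generated ideal of definition whose generators admit compatible $q$-power roots. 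Only \emph{then} does it deduce, via Prop.~5.9 of \cite{Sch}, that the $R$'s are perfectoid, hence that $\mathcal{M}^{\ad}_{\eta}$ is a perfectoid space, and only after that (via Prop.~6.18 of \cite{Sch}) that Thm.~\ref{mainthm} has an adic analogue. You run this chain backwards: you invoke the adic fiber-product description of $\mathcal{M}^{\ad}_{\eta}$ to conclude it is perfectoid and hence that each $U_c$ is perfectoid affinoid. As the paper is organized, that description is a consequence of the present lemma, so the argument is circular.

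The circularity is not merely one of exposition. Fiber products need not exist in the category of adic spaces — this is exactly why Scholze proves their existence for perfectoid spaces — so the formal-scheme statement of Thm.~\ref{mainthm} does not by itself "present $\mathcal{M}^{\ad}_{\eta}$ as the fiber product of the adic generic fibers"; and Lemma~\ref{FormalVSadic}, which you cite for the claim that the generic fibers of $\tilde{G}^{n}$ and $\wedge\tilde{G}$ are perfectoid, says nothing about generic fibers or uniformity. To salvage your route you would need to (i) prove the polydisk case of the present lemma directly (this is genuinely easier, since $\OO_{\hat{K}^{\ab}}\powerseries{X_1^{1/q^\infty},\dots,X_n^{1/q^\infty}}$ is explicit), and (ii) identify $R_c=A\langle Y_1/\varpi,\dots,Y_n/\varpi\rangle[1/\pi]$ with the completed tensor product $R_1\hat{\otimes}_{R_3}\hat{K}^{\ab}$ of the corresponding perfectoid affinoid algebras, using the presentation of $A$ as a completed tensor product in Thm.~\ref{mainthm}, and then apply the (characteristic-$p$, ring-level) case of Prop.~6.18 of \cite{Sch}. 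Step (ii) is the content that is missing. The more economical repair is the paper's: since $A$ is perfect and $\pi$-torsion-free and $I=(Y_1,\dots,Y_n)$ with $Y_i^{1/q^m}\in A$, the ring $R_c^{+}$ is a perfect, $\varpi$-adically complete ring of definition closed under $q$-th roots in $R_c$, and a short direct argument (if $f$ is power-bounded then $\varpi^{c/q^m}f\in R_c^{+}$ for all $m$ and some fixed $c$) shows $R_c^{\circ}$ is bounded.
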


\begin{proof} This is a consequence of the fact that $A$ is a perfect complete flat $\OO_{\hat{K}^{\ab}}$-algebra admitting a finitely generated ideal of definition $(X_1,\dots,X_n)$.  A family of affinoids which cover $\mathcal{M}^{\ad}$ is given by the rational subsets defined by $\abs{X_i}^N\leq \abs{\pi}$, for $N\geq 0$.  One takes $R=A\tatealgebra{X_1^N/\pi,\dots,X_n^N/\pi}[1/\pi]$, and $R^+$ the integral closure of $A\tatealgebra{X_1^N/\pi,\dots,X_n^N/\pi}$ in $R$.  The Banach norm on $R$ is the one induced by $R^+$:  $\norm{x}=
\inf\set{\abs{t},\; t\in\hat{K}^{\ab},\;tx\in R^+}$.  Under this norm, $R^+$ is open and bounded, and the set of power-bounded elements of $R$ is $R^+$.
\end{proof}

By Prop. 5.9 of \cite{Sch}, the rings $R$ appearing in the lemma are {\em perfectoid $K$-algebras} (Defn. 5.1), so that each $(R,R^+)$ is a {\em perfectoid affinoid $K$-algebra} (Defn. 6.1).  The adic spectrum $\Spa(R,R^+)$ is an affinoid perfectoid space, and therefore $\mathcal{M}^{\ad}_{\eta}$ is a {\em perfectoid space} (Defn. 6.16), which we call the {\em Lubin-Tate perfectoid space}.

Recall that $\C$ is the completion of an algebraic closure of $\hat{K}^{\nr}$.  We write $\overline{\eta}=\Spa(\C,\OO_{\C})$ and $\mathcal{M}^{\ad}_{\overline{\eta}}$ for the geometric adic generic fiber of $\mathcal{M}$.

Fiber products exist in the category of perfectoid spaces (\cite{Sch}, Prop. 6.18), so Thm. \ref{mainthm} has an analogue in the adic setting.  That is, we have the adic generic fiber $\tilde{G}^{n,\ad}_{\overline{\eta}}$ (resp., $\wedge\tilde{G}^{\ad}_{\overline{\eta}}$) of the formal scheme $\tilde{G}^n_{\OO_C}$ (resp., $\wedge\tilde{G}_{\OO_C}$), and the adic generic fiber $\mathcal{M}_{\wedge G_0,\overline{\eta}}^{\ad}$ of $\mathcal{M}_{\wedge G_0}$.  All of these are perfectoid spaces over $\C$, and $\mathcal{M}^{\ad}_{\overline{\eta}}$ is the fiber product of  $\tilde{G}^{n,\ad}_{\overline{\eta}}$ and $\mathcal{M}_{\wedge G_0,\overline{\eta}}^{\ad}$ over $\wedge\tilde{G}^{\ad}_{\overline{\eta}}$.

\subsection{CM points}
\subsection{CM points}
Let $L/K$ be an extension of degree $n$, with uniformizer $\pi_L$ and residue field $\FF_Q$.  Let $C$ be a valued field containing $L$.   A deformation $H$ of $H_0$ over $\OO_C$ has {\em CM by $\OO_L$} if there exists a $K$-linear isomorphism $L\to \End G\otimes K$.  Equivalently, $H$ has CM by $L$ if it is isogenous to a formal $\OO_L$-module (necessarily of height 1).

Fix such a CM deformation $H$ over $\OO_{\hat{L}^{\nr}}$.  Let $L_\infty$ be the field obtained by adjoining the $\pi^m$ torsion of $H$ to $\hat{L}^{\nr}$.  If $\phi$ is a level structure on $H$, we get a triple $(H,\rho,\phi)$ defining an $\hat{L}_\infty$-point of $\M_\infty$.   Points of $\mathcal{M}_{H_0,\infty,C}$ constructed in this matter will be called {\em CM points} (or points with CM by $L$).

Let $x$ be a point of $\M_\infty$ corresponding to the triple $(H,\iota,\phi)$.  Then $x$ induces embeddings $j_{x,1}=\from L\to M_n(K)$ and $j_{x,2}\from L\to D$, characterized by the commutativity of the diagrams
\[
\xymatrix{
K^n \ar[d]_{j_{x,1}(\alpha)} \ar[r]^{\phi} & V(H) \ar[d]^{\alpha} \\
K^n \ar[r]_{\phi} & V(H)
}
\]
and
\[
\xymatrix{
H_0 \ar[d]_{j_{x,2}(\alpha)} \ar[r]^{\iota} & H\otimes \FF_Q \ar[d]^{\alpha} \\
H_0 \ar[r]_{\iota} & H\otimes\FF_Q
}
\]
for $\alpha\in K$.

The group $\GL_n(K)\times D^\times$ acts transitively on the set of CM points;  the stabilizer of any particular CM point $x$ is the image of $L^\times$ under the diagonal map $\alpha\mapsto (j_{x,1}(\alpha),j_{x,2}(\alpha))$.  If the CM point $x$ is fixed, we will often suppress $j_{x,1}$ and $j_{x,2}$ from the notation and think of $L$ as a subfield of both $M_n(K)$ and $D$.

By Lubin-Tate theory, points of $\M_{H_0,\infty}$ with CM by $L$ are defined over the completion of the maximal abelian extension of $L$.  That is, these points are fixed by the commutator $[W_L,W_L]$.  Recall that the relative Weil group $W_{L/K}$ is the quotient of $W_K$ by the closure of the commutator $[W_L,W_L]$.  If $x$ has CM by $L$, and $w\in W_K$, then $x^w$ also has CM by $L$, and therefore there exists a pair $(g,b)\in\GL_n(K)\times D^\times$ for which $x^{(g,b)}=x^{w^{-1}}$.  Then $w\mapsto L^\times (g,b)$ is a well-defined map $j_x\from W_{L/K}\to L^\times\backslash(\GL_n(K)\times D^\times)$ (where the $L^\times$ is diagonally embedded).

Recall also that there is an exact sequence
\[
\xymatrix{
1\ar[r] & L^\times \ar[r]^{\rec_L} & W_{L/K} \ar[r] & \Gal(L/K)
}
\]
correponding to the canonical class in $H^2(\Gal(L/K),L^\times)$ (cf. \cite{TateNumberTheoreticBackground}).

\begin{lemma}  For all $\alpha\in L^\times$, we have $j_x(\rec_L\alpha)=L^\times(1,\alpha)$.
\end{lemma}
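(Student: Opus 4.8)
The strategy is to reduce the statement to classical Lubin--Tate theory for the field $L$, and then to check the asserted formula on the two sets of generators $\OO_L^\times$ and $\pi_L^{\Z}$ of $L^\times$. Since $H$ has CM by $L$ it is $\OO_K$-isogenous to a formal $\OO_L$-module of $\OO_L$-height one; replacing $H$ by this module alters the CM point $x$ by an element of $\GL_n(K)\times D^\times$ and conjugates $j_x,j_{x,1},j_{x,2}$ compatibly (both sides of the claimed identity transform equivariantly under the transitive $\GL_n(K)\times D^\times$-action on CM points), so we may assume $H$ is the Lubin--Tate formal $\OO_L$-module attached to the chosen uniformizer $\pi_L$. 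The two inputs I would extract from Lubin--Tate theory for $L$ are: (a) for $\beta\in\OO_L^\times$, $\rec_L(\beta)$ fixes $\hat L^{\nr}$ and acts on $V(H)=\varprojlim_m H[\pi_L^m]$ as multiplication by $\beta$ (in the normalization used in \S\ref{groupactions}, compatible with $\chi$); and (b) $\rec_L(\pi_L)$ acts trivially on $V(H)$ and acts on $\OO_{\hat L^{\nr}}$ as the Frobenius of $L$, i.e.\ as the $q^{f(L/K)}$-power map on $\overline{\FF}_q$, which on the special fiber $H_0$ is exactly the relative Frobenius isogeny $F^{f(L/K)}$ (here one uses $[\pi_L]_H\equiv X^{q^{f(L/K)}}\pmod{\pi_L}$ for a suitable choice of $H$).

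Take $\alpha\in\OO_L^\times$. By (a), $\rec_L(\alpha)$ leaves $H$ and the framing $\iota$ untouched and replaces $\phi$ by $[\alpha]\circ\phi=\phi\circ j_{x,1}(\alpha)$, the last equality being the defining commuting square of $j_{x,1}$. Hence $x^{\rec_L(\alpha)^{-1}}$ has level structure $\phi\circ j_{x,1}(\alpha)^{-1}$, which is precisely $x^{(j_{x,1}(\alpha)^{-1},1)}$ for the action of $\GL_n(K)\times D^\times$. By the definition of $j_x$ this gives $j_x(\rec_L\alpha)=L^\times\bigl(j_{x,1}(\alpha)^{-1},1\bigr)$. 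Since the diagonally embedded $L^\times$ contains $\bigl(j_{x,1}(\alpha),j_{x,2}(\alpha)\bigr)$, left multiplication rewrites this coset as $L^\times\bigl(1,j_{x,2}(\alpha)\bigr)=L^\times(1,\alpha)$, which is the claim for units. (The signs depend on the transpose/inversion convention of \S\ref{groupactions} and on the normalization in (a), but they cancel; I would pin them down as in the last paragraph.)

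Now take $\alpha=\pi_L$. By (b), $\rec_L(\pi_L)$ does not move the level structure but twists the base, and the induced twist of $H$ is undone on the special fiber precisely by the endomorphism $F^{f(L/K)}$ of $H_0$. Under the chosen identification $F^n=[\pi]_{H_0}$, $F$ is a uniformizer of $\OO_D$ and $F^{f(L/K)}$ is identified with $\pi_L$ viewed in $L\subset D$, i.e.\ with $j_{x,2}(\pi_L)$. Therefore $x^{\rec_L(\pi_L)^{-1}}=x^{(1,\,j_{x,2}(\pi_L)^{\pm1})}$, and combining with the $W_K$/$\chi$ bookkeeping that makes $(1,j_{x,2}(\pi_L)^{-1},\rec_L\pi_L)$ lie in the group acting on $\mathcal M_{\OO_{\C}}$ (the valuation condition holds because $\chi(\rec_L\pi_L)=N_{L/K}(\pi_L)$ has $K$-valuation $f(L/K)=v_D(\pi_L)$), we obtain $j_x(\rec_L\pi_L)=L^\times\bigl(1,j_{x,2}(\pi_L)\bigr)=L^\times(1,\pi_L)$. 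Since $j_x$, $\rec_L$, and $\alpha\mapsto(j_{x,1}(\alpha),j_{x,2}(\alpha))$ are homomorphisms and $\OO_L^\times$ together with $\pi_L$ generate $L^\times$, the two cases combine to give the lemma for every $\alpha\in L^\times$.

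The only real difficulty is keeping the normalizations straight: the direction of the $W_K$-action on $\mathcal M$, the transpose-and-inverse recipe of \S\ref{groupactions} turning it into a right $\GL_n(K)\times D^\times$-action, the sign in $\rec_L$ and $\chi$, and the Lubin--Tate formula for the action on torsion all introduce inversions that must cancel so as to produce $+\alpha$ rather than $\alpha^{-1}$ in the $D^\times$-slot. I would settle this by first proving the $n=1$ case, where $L=K$, $\mathcal M_{\wedge G_0}=\Spf\OO_{\hat K^{\ab}}$, and the statement is literally the reciprocity law of classical Lubin--Tate theory with the normalization fixed in the introduction, and then propagating the normalization to general $n$ via the compatibility of $j_x$ with the determinant and the reduced norm (diagram~\eqref{gbcommutes}) together with $N_{L/K}=\det\circ j_{x,1}=N\circ j_{x,2}$ on $L^\times$.
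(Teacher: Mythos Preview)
Your proposal is correct and follows essentially the same route as the paper: reduce to classical Lubin--Tate theory for $L$ after replacing $x$ by a convenient translate, then check the identity separately on $\OO_L^\times$ (via the Galois action on the level structure $\phi$) and on $\pi_L$ (via the Frobenius twist of $\iota$), using the diagonal $L^\times$ to pass from the $\GL_n$-slot to the $D^\times$-slot. The only difference is cosmetic: the paper simply writes down the Lubin--Tate reciprocity formulas with the chosen normalization and reads off $x^{\rec_L u}=x^{(u,1)}=x^{(1,u^{-1})}$ and $x^{\rec_L\pi_L}=(H,\iota\circ\Fr_Q^{-1},\phi)=x^{(1,\pi_L^{-1})}$ directly, whereas you defer the sign bookkeeping to a separate $n=1$ check and the determinant compatibility; either way suffices.
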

%%%%resume editing here

\begin{proof}  This is tantamount to the statement that $x^{\rec_L\alpha}=x^{(1,\alpha^{-1})}$, and will follow from classical Lubin-Tate theory.

By replacing $x$ with a translate, we may assume that $H/\OO_L$ admits endomorphisms by all of $\OO_K$, and that $\phi$ maps $\OO_K^n$ isomorphically onto $T(H)$.

In~\cite{LubinTate}, the main theorem shows that the maximal abelian extension $L^{\ab}/L$ is the compositum of $L^{\nr}$, the maximal unramified extension, with $L_\infty$, the field obtained by adjoining the roots of all iterates $[\varpi^n]_{H}$ to $L$.   Write $\alpha=\pi_L^mu$, with $u\in\OO_L^\times$.
In the notation of~\cite{LubinTate}, the Artin symbol $(\alpha,L^{\ab}/L)$ restricts to the $m$th power of the (arithmetic) $Q$th power Frobenius $\Fr_{Q}$ on $L^{\nr}$ and sends a root $\xi$ of $[\pi^n_L]_{H}$ to $[u^{-1}]_{H}(\xi)$.
But $\rec_L$ sends a uniformizer to a geometric Frobenius, so $\rec_L(\alpha)=(\alpha^{-1},L^{\ab}/L)$ as elements of $W_L^{\ab}$.
%Now the point $x$ represents a triple $(H,\iota,\phi)$, where $\iota\from H_0\to G\otimes\overline{\FF}_q$ is an isomorphism and $\phi\from \OO_K^n\tilde{\to} \varprojlim H[\pi^m](\OO_{L^{\ab}})$ is a compatible family of Drinfeld level structures.
Thus for a unit $u\in \OO_L^\times$, $x^{\rec_L(u)}$ is represented by the triple $(H,\iota,\phi)^{\rec_L(u)}=(H,\iota,\phi\circ u)=x^{(u,1)}=x^{(1,u^{-1})}$ as claimed.
%(The objects $G$ and $\mu$ admit models over $\OO_L$ and $\FF_{q^n}$ respectively; thus elements of $W_L$ have no effect on them.)
Finally, since $\rec_L(\varpi_L)$ acts as geometric Frobenius on $L^{\nr}$ and as the identity on $L_\infty$, we have  $x^{\rec_L(\pi_L)}=(H,\iota,\phi)^{\rec_L(\pi_L)}=(H,\iota\circ\Fr_{Q}^{-1},\phi)$.
Since $[\pi_L]_{G}$ reduces to $\Fr_{Q}$ modulo $\varpi$, we have $x^{\rec_L(\varpi)}=x^{(1,\varpi^{-1})}$.  This completes the proof of the claim.
\end{proof}

Let $\sigma$ be the image of $w$ in $\Gal(L/K)$, let $\tilde{j}_x(w)=L^\times(g,b)$, and let $\alpha\in L^\times$ be arbitrary.  Repeatedly using the fact that the actions of $\GL_n(K)\times D^\times$ and $W_K$ on $\mathcal{M}_{H_0,\infty}(\OO_C)$ commute, we have by the previous lemma
\begin{eqnarray*}
x^{(1,b^{-1}\alpha b)}
&=&x^{(g,b)^{-1}(1,\alpha)(g,b)}\\
&=& x^{w(1,\alpha)(g,b)} \\
&=& x^{(1,\alpha)w(g,b)}\\
&=& x^{\rec_L(\alpha)w(g,b)}\\
&=& x^{(g,b)\rec_L(\alpha)w}\\
&=& x^{w^{-1}\rec_L(\alpha)w }\\
&=& x^{\rec_L(\alpha^\sigma)}\\
&=& x^{(1,\alpha^{\sigma})},
\end{eqnarray*}
so that $b^{-1}\alpha b=\alpha^{\sigma}$.  A similar calculation shows that $g^{-1}\alpha g=\alpha^{\sigma}$.   Let $\mathcal{N}$ and $\mathcal{N}_D$ be the normalizers of $L^\times$ in $\GL_n(K)$ and $D^\times$, respectively.  Then both $\mathcal{N}$ and $\mathcal{N}_D$ are extensions of $\Gal(L/K)$ by $L^\times$.  Let $\tilde{\mathcal{N}}\subset \GL_n(K)\times D^\times$ be the pushout in the diagram
\[
\xymatrix{
\tilde{\mathcal{N}} \ar[r] \ar[d]& \mathcal{N}_D \ar[d] \\
\mathcal{N} \ar[r]& \Gal(L/K).
}
\]
The above calculation shows that $(g,b)$ lies in $\tilde{\mathcal{N}}$.   Thus $j$ takes values in the {\em group} $L^\times\backslash\tilde{\mathcal{N}}$, and one sees from the calculation
\[
x^{j(w)j(w')}=x^{w^{-1}j(w')}=x^{j(w')w^{-1}}=x^{(w')^{-1}w^{-1}}=x^{(ww')^{-1}}=x^{j(ww')}
\]
that $j\from W_K\to L^\times\backslash\tilde{\mathcal{N}}$ is a group homomorphism.  The restriction of $j$ to $W_L$ factors through $W_L^{\ab}$, so that $j$ factors through $W_K/[W_L,W_L]^{\text{cl}}=W_{L/K}$.  From the diagram
\[
\xymatrix{
1 \ar[r] & W_L^{\ab} \ar[d]_{\rec_L^{-1}} \ar[r] & W_{L/K}  \ar[r] \ar[d]^{j} & \Gal(L/K) \ar[d]^{=} \ar[r] & 1 \\
1 \ar[r] &  L^\times \ar[r]^{(1,\alpha)} &  \tilde{\mathcal{N}}/L^{\times} \ar[r] & \Gal(L/K) \ar[r] & 1
}
\]
we see that $j\from W_{L/K}\to L^\times\backslash\tilde{\mathcal{N}}$ is an isomorphism.  This isomorphism is characterized by the property that
\begin{equation}
\label{jproperty}
x^{j(w)}=x^{w^{-1}}.
\end{equation}

Observe that $\mathcal{N}$ is a {\em split} extension of $\Gal(L/K)$ by $L^\times$.  (This is clear as soon as we identify $G$ with $\Aut_K L$, and $\mathcal{N}$ as the semidirect product of $L^\times$ (acting by left multiplication on $L$) and $\Gal(L/K)$).  Choose a splitting $\Gal(L/K)\to \GL_n(K)$, and identify $\Gal(L/K)$ with its image.  There is an isomorphism $\tilde{\mathcal{N}}/L^\times\to\mathcal{N}_D$ given by sending $(\alpha \sigma,b)$ to $\alpha^{-1}b$ for every triple $\alpha\in L^\times$, $\sigma\in\Gal(L/K)$, $b\in D^\times$ such that conjugation by $b$ acts on $L^\times$ via $\sigma$.  Write $j_D\from W_{L/K}\to\mathcal{N}_D$ for the isomorphism which fits into the commutative diagram
\[
\xymatrix{
& L^\times\backslash\tilde{\mathcal{N}} \ar[dd]^{\sim} \\
W_{L/K} \ar[ur]^{\tilde{j}} \ar[dr]_{j_D} & \\
& \mathcal{N}_D.
}
\]

\begin{prop}
\label{stabx}
Let $\mathcal{S}$ be the stabilizer of $x$ in $\GL_n(K)\times D^\times\times W_K$.  Then $\mathcal{S}$ is generated by the following collections of elements:
\begin{enumerate}
\item $\set{(\alpha,\alpha,1)\biggm\vert \alpha\in L^\times}$,
\item $(j(w),w)$, where $w\in W_K$ (recall that $j(w)$ lies in $\tilde{\mathcal{N}}\subset \GL_n(K)\times D^\times$).
\end{enumerate}
\end{prop}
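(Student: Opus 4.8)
The plan is to prove both inclusions: that the two displayed collections lie in $\mathcal{S}$, and that they generate it. The whole argument is short once Eq.~\eqref{jproperty} and the earlier description of the $\GL_n(K)\times D^\times$-stabilizer of a CM point are available.

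First I would check that the listed elements stabilize $x$. For a diagonal element $(\alpha,\alpha,1)$ with $\alpha\in L^\times$: as recalled in the discussion of CM points, the stabilizer of $x$ inside $\GL_n(K)\times D^\times$ is exactly the image of $L^\times$ under $\alpha\mapsto(j_{x,1}(\alpha),j_{x,2}(\alpha))$, which under the convention that suppresses $j_{x,1},j_{x,2}$ is $\{(\alpha,\alpha):\alpha\in L^\times\}$; since the $W_K$-component is trivial this forces $(\alpha,\alpha,1)\in\mathcal{S}$. For an element $(j(w),w)$ with $w\in W_K$: this is precisely Eq.~\eqref{jproperty}, which says $x^{j(w)}=x^{w^{-1}}$, i.e. $x^{(j(w),w)}=x$. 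Here one should note that $j(w)$ is strictly speaking only a coset $L^\times(g,b)\subset\GL_n(K)\times D^\times$, but since the diagonal $L^\times$ already fixes $x$ the expression $x^{j(w)}$ is unambiguous, and changing the chosen representative of $j(w)$ alters $(j(w),w)$ only by an element of the first family.

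Next I would take an arbitrary $(g,b,w)\in\mathcal{S}$ and produce the required factorization. The key point is that the actions of $\GL_n(K)\times D^\times$ and of $W_K$ on the set of CM points commute (as used repeatedly in the preceding lemmas), so $x=x^{(g,b,w)}=(x^{(g,b)})^{w}$, hence $x^{(g,b)}=x^{w^{-1}}$. Comparing with Eq.~\eqref{jproperty}, which gives $x^{j(w)}=x^{w^{-1}}$, we get $x^{(g,b)}=x^{j(w)}$, so $(g,b)\,j(w)^{-1}$ fixes $x$ in $\GL_n(K)\times D^\times$. By the stabilizer description again there is $\alpha\in L^\times$ with $(g,b)\,j(w)^{-1}=(\alpha,\alpha)$, that is $(g,b)=(\alpha,\alpha)\,j(w)$. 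Reinstating the Weil-group component yields
\[ (g,b,w)=(\alpha,\alpha,1)\cdot(j(w),w), \]
which exhibits $(g,b,w)$ as a product of an element of the first family and one of the second. Combined with the first paragraph this shows $\mathcal{S}$ is generated by the two collections.

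I expect the only real care needed to be bookkeeping: remembering that $j(w)$ is a coset rather than a genuine element, keeping the commutation of the two group actions straight so that the $W_K$-factor may be slid past the $\GL_n(K)\times D^\times$-factor, and being consistent with the notational convention that embeds $L^\times$ diagonally in $\GL_n(K)\times D^\times$ via $j_{x,1}$ and $j_{x,2}$. There is no geometric or analytic input beyond Eq.~\eqref{jproperty} and the identification of the $\GL_n(K)\times D^\times$-stabilizer of a CM point, both already established.
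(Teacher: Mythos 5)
Your proposal is correct and follows the same argument as the paper: verify the listed elements fix $x$ via the CM-stabilizer description and Eq.~\eqref{jproperty}, then for arbitrary $(g,b,w)\in\mathcal{S}$ deduce $x^{(g,b)}=x^{w^{-1}}=x^{j(w)}$ and conclude $(g,b)j(w)^{-1}$ lies in the diagonal $L^\times$. Your extra remark that $j(w)$ is only a coset of the diagonal $L^\times$, so the factorization is well defined up to the first family, is a small but worthwhile clarification the paper leaves implicit.
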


\begin{proof} We have already seen that these elements fix $x$.  Suppose $(g,b,w)$ fixes $x$, so that $x^{(g,b)}=x^{w^{-1}}=x^{j(w)}$.  Then $(g,b)j(w)^{-1}\in GL_n(K)\times D^\times$ fixes $x$, and so must lie in the diagonally embedded $L^\times$.  
\end{proof}

\subsection{A family of special affinoids in $\mathcal{M}^{\ad}_{\overline{\eta}}$}
Let $L/K$ be the unramified extension of degree $n$, so that $L=\FF_{q^n}\laurentseries{\pi}$.  Let $x_{\CM}=(x_1,\dots,x_n)\in \mathcal{M}_{G_0}(\OO_{\C})$ be a point with CM by $\OO_L$.  Finally, let $m\geq 1$ be odd.  We will construct an open affinoid $\mathcal{Z}=\mathcal{Z}_{x_{\CM},m}\subset \mathcal{M}^{\ad}_{\overline{\eta}}$, depending on $x_{\CM}$ and $m$.  This affinoid will be the intersection of $\mathcal{M}^{\ad}_{\overline{\eta}}$ with a certain open affinoid $\mathcal{Y}\subset \tilde{G}^{n,\ad}_{\overline{\eta}}$.  Here $\mathcal{Y}$ is isomorphic to the ``perfectoid closed ball" $\Spa(R,R^+)$, where $R^+=\OO_{\C}\tatealgebra{Y_1^{1/q^\infty},\dots,Y_n^{1/q^\infty}}$ and $R=R^+[1/\pi]$.  We will compute an approximation to the restriction of the determinant map $\delta\from \tilde{G}^{n,\ad}_{\overline{\eta}}\to \wedge\tilde{G}^{\ad}_{\OO_{\C}}$ to $\mathcal{Y}$.  This calculation will enable us to compute the reduction of $\mathcal{Z}$.  That is, if $\mathcal{Z}=\Spa(S,S^+)$, we give an explicit description of $S^+\otimes_{\OO_{\C}}\overline{\FF}_q$.

It will be simpler for us to assume that there exist $x\in\tilde{G}(\OO_{\C})$ and elements $\alpha_1,\dots,\alpha_n\in \FF_q$ such that $x_i=\alpha_i x$.  This is equivalent to the assumption that the embedding $\OO_L\injects M_n(\OO_K)$ carries $\FF_{q^n}$ into $M_n(\FF_q)$.  Since $\OO_{\C}$ is perfect, $\tilde{G}(\OO_{\C})$ is in bijection with the set of topologically nilpotent elements in $\OO_{\C}$.  We will intentionally confuse $x_i$ and $x$ with the elements of $\OO_{\C}$ to which they correspond.  Similarly, the exterior product $x_1\wedge\cdots\wedge x_n\in\wedge\tilde{G}(\OO_{\C})$ corresponds to a topologically nilpotent element $t\in\OO_{\C}$.  Note that $\abs{\pi}=\abs{x}^{q^n-1}=\abs{t}^{q-1}$.

We now describe the affinoid $\mathcal{Y}$.  $\tilde{G}^{n,\ad}_{\OO_{\C}}$ is the adic generic fiber of $\Spa(B,B)$, where $B=\OO_{\C}\powerseries{X_1^{1/q^\infty},\dots,X_n^{1/q^\infty}}$.  Define elements $Y_1,\dots,Y_n\in B[1/\pi]$ by the system of $n$ equations
\begin{equation}
\label{XfromY}
X_i=x_i+(x_iY_1)^{q^{\frac{(m-1)n}{2}+1}}+(x_iY_2)^{q^{\frac{(m-1)n}{2}+2}}+\dots+
(x_iY_{n-1})^{q^{\frac{(m-1)n}{2}+n-1}}+(x_iY_n)^{q^{mn}}.
\end{equation}
(The system is nondegenerate because the $x_i$ are linearly independent over $\FF_q$.)  We define $\mathcal{Y}$ by the conditions $\abs{Y_i}\leq 1$, $i=1,\dots,n$.  This is a rational subset of $\tilde{G}^{n,\ad}_{\OO_{\C}}$.  We have $\mathcal{Y}=\Spa(R[1/\pi],R)$, where
\[ R=\OO_{\C}\tatealgebra{Y_1^{1/q^\infty},\dots,Y_n^{1/q^\infty}}\]
and $R=R^+[1/\pi]$.

The ring $R$ is perfect, so that $\tilde{G}(R)$ may be identified with the set of topologically nilpotent elements $\Nil(R)$ of $R$.   The elements $X_1,\dots,X_n\in R$ are topologically nilpotent and therefore determine an $n$-tuple in $\tilde{G}^n(R)$.  We may take their exterior product to get an element of $\wedge \tilde{G}(R)$, which we identify with a topologically nilpotent element $\delta(X_1,\dots,X_n)\in R$.

We intend to give an approximation of $\delta(X_1,\dots,X_n)$ in terms of the variables $Y_1,\dots,Y_n$.  For this, we recall the construction of the determinant of a formal vector space.  We have the isocrystal $M^*(\tilde{G}_0)$:  this is a $\hat{K}^{\nr}$-vector space of dimension $n$ with a Frobenius-semilinear automorphism $F$.  It admits a basis $e,Fe,\dots,F^{n-1}e$, with $F^ne=\pi^{-1}e$.  We have an isomorphism \[v\from \tilde{G}(R)\tilde{\to} \left(M^*(\tilde{G}_0)\hat{\otimes}_{\overline{\FF}_q} \Nil(R)\right)^{F\otimes\tau},\]
which carries $z\in \Nil(R)$ onto $v(z)=\sum_{i\in\Z} F^ie\otimes z^{q^i}$.
The top exterior power $\wedge M^*(\tilde{G}_0)$ has basis $f=e\wedge\cdots\wedge F^{n-1}e$, on which $F$ acts as $(-1)^{n-1}\pi^{-1}$.  We have the elements $X_1,\dots,X_n\in \Nil(R)^n$, and we can form the exterior product
\[v(X_1)\wedge\cdots\wedge v(X_n)=\sum_{i\in\Z} F^if \otimes \delta(X_1,\dots,X_n)^{q^i}.\]   So the element $\delta(X_1,\dots,X_n)\in\Nil(R)$ we seek is the coefficient of $f$ in
$v(X_1)\wedge\cdots\wedge v(X_n)$.

Let
\[ W = e\otimes x + \sum_{r=1}^{n-1}F^{-\frac{(m-1)n}{2}-r}e\otimes xY_r+F^{-mn}e\otimes xY_n, \]
and for $j=0,\dots,n-1$ let
\[ U_j = \sum_{i\equiv j\;(n)} \left(F^i\otimes \tau^i\right)(W), \]
so that for $j=1,\dots,n$ we have
%\begin{eqnarray*}
%\sum_{i=1}^n\alpha_i^{q^j}u_j &=&
%\sum_{r\in\Z} \left(F^{nr+j}
\[
\sum_{i=1}^n\alpha_i^{q^j}U_j = v(X_j). \]
Let $\varepsilon=\det(\alpha_i^{q^j})$, so that $v(X_1)\wedge\cdots v(X_n)=\varepsilon U_0\wedge\cdots\wedge U_{n-1}$.

%Let $\delta'\in R$ be the topologically nilpotent element corresponding to $u_0\wedge\cdots\wedge u_{n-1}$, so that $\delta'=\varepsilon^{-1}\delta(X_1,\dots,X_n)$.

\begin{lemma} \label{nonconstant}
Up to a sign, the sum of the nonconstant terms in the coefficient of $f$ in $U_0\wedge\cdots \wedge U_{n-1}$ are congruent modulo $x^{q^{m+n}}R$ to the sum of the nonconstant terms in the coefficient of $f$ in $\bigwedge_{i=m}^{m+n-1} \left(F^i\otimes\tau^i\right)(W)$.
\end{lemma}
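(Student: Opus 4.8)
The plan is to expand the wedge $U_0\wedge\cdots\wedge U_{n-1}$ into a sum of elementary wedges $\bigwedge_{j=0}^{n-1}(F^{i_j}\otimes\tau^{i_j})(W)$, with $\mathbf{i}=(i_0,\dots,i_{n-1})$ running over tuples satisfying $i_j\equiv j\pmod n$, and to single out the summand $\mathbf{i}^{\ast}$ for which each $i_j$ is the representative of $j$ in $\{m,m+1,\dots,m+n-1\}$; after permuting the wedge factors (which costs only an overall sign) this summand is exactly $\bigwedge_{i=m}^{m+n-1}(F^i\otimes\tau^i)(W)$. Thus the statement reduces to showing that for every $\mathbf{i}\neq\mathbf{i}^{\ast}$ the nonconstant part of the coefficient of $f$ in $\bigwedge_{j}(F^{i_j}\otimes\tau^{i_j})(W)$ lies in $x^{q^{m+n}}R$, and then summing over $\mathbf{i}$.

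To read off these coefficients I would expand each factor $(F^{i_j}\otimes\tau^{i_j})(W)$ into its $n+1$ summands---the images of $e\otimes x$, of $F^{-\frac{(m-1)n}{2}-r}e\otimes xY_r$ for $1\le r\le n-1$, and of $F^{-mn}e\otimes xY_n$---rewrite each $F^{\ell}e$ as $\pi^{-\lfloor\ell/n\rfloor}F^{\ell\bmod n}e$, and extract the coefficient of $f=e\wedge Fe\wedge\cdots\wedge F^{n-1}e$. Each contribution is indexed by a choice of one summand of $W$ in every wedge factor subject to the condition that the resulting $F$-exponents form a complete residue system modulo $n$, and it is a monomial in $Y_1,\dots,Y_n$ whose coefficient has the shape $\pm\pi^{a}x^{b}$. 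The constant monomials are precisely those in which every factor selects the $e\otimes x$ summand; these are discarded.

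The heart of the argument is a valuation estimate. Normalizing the valuation of $\OO_{\C}$ so that $v(x)=1$, hence $v(\pi)=q^{n}-1$, one computes that a factor selecting $e\otimes x$ at index $i$ contributes $q^{i}-(q^{n}-1)\lfloor i/n\rfloor$ to the valuation of the monomial's coefficient, whereas a factor selecting the $Y_r$-summand (resp.\ the $Y_n$-summand) at index $i$ contributes the same expression with the $F$-exponent lowered by $\frac{(m-1)n}{2}+r$ (resp.\ by $mn$); here one uses that $m$ is odd, so that $\frac{(m-1)n}{2}$ is divisible by $n$ and the reductions modulo $n$ behave as expected. The requirement that the selected $F$-exponents cover all residues modulo $n$ while at least one $Y$-summand is chosen confines the indices $i_j$ to a narrow band, and one then checks that the total valuation drops below $q^{m+n}$ only for the window $\{m,\dots,m+n-1\}$: moving the band up makes some $q^{i_j}$ reach $q^{m+n}$, while moving it down forces a compensating power of $\pi$ whose valuation already attains $q^{m+n}$, using at the margin the inequality $\sum_{i=m}^{m+n-1}q^{i}<q^{m+n}$ to bound the contributions of the unshifted factors. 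Adding these estimates over all nonconstant monomials and all $\mathbf{i}\neq\mathbf{i}^{\ast}$ gives the congruence.

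The main obstacle is exactly this estimate: one must simultaneously control the $x$-powers produced by the perturbation \eqref{XfromY}, the $\pi$-powers produced when $F$-exponents are reduced modulo $n$, and the combinatorial constraint that the chosen summands of $W$ realize a complete residue system---and in particular verify that the $Y_n$-summand, whose $F$-exponent is depressed by the large amount $mn$, contributes no low-valuation nonconstant monomial outside the window. The cleanest way to keep the bookkeeping in hand is to record, for each monomial, only its $Y$-content together with the pair $(b,a)$ of total $x$- and $\pi$-exponents, and to observe that this data already determines whether the monomial can arise from $\mathbf{i}^{\ast}$; a term-by-term estimate of the full unstructured sum would be unwieldy.
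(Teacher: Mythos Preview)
Your overall strategy---expand $U_0\wedge\cdots\wedge U_{n-1}$ over tuples $(b_0,\dots,b_{n-1})$ with $b_j\equiv j\pmod n$, expand each factor along the summands of $W$, and argue that only the window $\{m,\dots,m+n-1\}$ produces nonconstant terms surviving modulo $x^{q^{m+n}}R$---is exactly the paper's. The problem is in what you call ``the heart of the argument''.

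You propose to track a single valuation on coefficients of shape $\pm\pi^{a}x^{b}$ using $v(x)=1$, $v(\pi)=q^{n}-1$. But the $\pi$'s arising from $F^{\ell}e=\pi^{-\lfloor\ell/n\rfloor}F^{\ell\bmod n}e$ live in the first tensor factor $\hat{K}^{\nr}$ of $M^*(\tilde G_0)\hat{\otimes}_{\overline{\FF}_q}R$; they are \emph{not} elements of $R$ and cannot be traded against powers of $x\in\OO_{\C}$. ``Coefficient of $f$'' here means the $R$-coefficient sitting in front of $1\cdot f$ in the expansion along powers of $\pi\otimes 1$, so a term contributes only when its total $\pi$-exponent vanishes. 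Writing $s_j$ for the shift selected in the $j$th factor (so $s_j\in\{0,\ (m-1)n/2+1,\dots,(m-1)n/2+n-1,\ mn\}$), this is the \emph{equality}
\[
\sum_{j}(b_j-s_j)=\frac{n(n-1)}{2},
\]
in addition to the $b_j-s_j$ being distinct modulo $n$. Your valuation scheme never produces this constraint, and without it there is no mechanism forcing the $b_j$ into any window: a large negative $a$ cannot be compensated by a large $b$, because terms with $a\neq 0$ simply do not appear in the coefficient of $f$.

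Once the equality is in hand the estimate collapses to short combinatorics, which is the paper's route. From $b_j\le m+n-1$ (otherwise the $x$-power in $R$ already reaches $q^{m+n}$) and the $b_j$ distinct modulo $n$ one gets $\sum_j b_j\le mn+n(n-1)/2$, hence $\sum_j s_j\le mn$. Nonconstancy forces some $s_j\neq 0$; the residue condition forces the ``middle'' shifts $(m-1)n/2+r$ to occur at least in pairs; and since $\sum_j s_j\equiv 0\pmod n$ this pins $\sum_j s_j=mn$, so $\sum_j b_j=mn+n(n-1)/2$ and the $b_j$ are exactly $m,m+1,\dots,m+n-1$. No delicate balancing of $q$-power sums or separate treatment of the $Y_n$-summand is needed.
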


\begin{proof} In the expansion of $U_0\wedge\cdots \wedge U_{n-1}$ we encounter terms of the form $\bigwedge_{i=0}^{m-1} \left(F^{b_i}\otimes\tau^{b_i}\right)(W)$, where $(b_0,\dots,b_{n-1})$ is an $n$-tuple of integers with $b_i\equiv i\pmod{n}$.  After substituting the expression for $W$, we find that the coefficient of $f$ in $\bigwedge_{i=0}^{m-1} \left(F^{b_i}\otimes\tau^{b_i}\right)(W)$ is sum of terms of the form $T=\bigwedge_{i=0}^{m-1}\left(F^{b_i}\otimes\tau^{b_i}\right)(F^{-s_i}e\otimes xz_i)$.  Here $s_i$ assumes one of the values $0,(m-1)n/2+1,\dots,(m-1)n/2+n-1,mn$, with $0$ occurring only if and only if $z_i$ is a constant.  These must satisfy $\sum_i (b_i-s_i)=n(n-1)/2$.  Furthermore the $b_i-s_i$ must be distinct modulo $n$, so that if any $s_i$ assumes the value $(m-1)n/2+r$, then there must be a $j$ with $s_j=(m-1)n/2+r'$.

If $T$ is nonzero modulo $x^{q^{m+n}}R$, then we have $b_i\leq m+n-1$ for $i=1,\dots,n-1$.  Since the $b_i$ are distinct modulo $n$, we have $\sum_i b_i\leq \sum_{i=m}^{m+n-1}i=mn+n(n-1)/2$.   Thus $\sum_i s_i\leq mn$.  If in addition $T$ is nonconstant, then at least one of the $s_i$ is nonzero.  Thus either one of the $s_i$ equals $mn$, or else we have $s_i=(m-1)n/2+r$ and $s_j=(m-1)n/2+r'$ for some $i$ and $j$.  Since $\sum_i s_i$ is divisible by $n$, this shows at once that $\sum_i s_i=mn$.  Therefore $\sum_i b_i=mn+n(n-1)/2$, which implies that the $b_i$ are the integers $m,m+1,\dots,m+n-1$.
\end{proof}

Let $\psi$ be the $K\hat{\otimes}_{\overline{\FF}_q}R$-linear endomorphism of $M^*(\tilde{G}_0)\hat{\otimes}_{\overline{\FF}_q}R$ whose matrix is
\[
\label{psimatrix}
\text{Id}_{n\times n} +
\begin{pmatrix}
\pi^m\otimes Y_n^{q^m} & \pi^{(m-1)/2}\otimes Y_1^{q^m} & \dots & \pi^{(m-1)/2}\otimes Y_{n-1}^{q^m}  \\
\pi^{(m+1)/2}\otimes Y_{n-1}^{q^{m+1}} & \pi^m\otimes Y_n^{q^{m+1}} & \dots & \pi^{(m-1)/2}\otimes Y_{n-1}^{q^{m+1}} \\
\vdots & \vdots & \ddots & \vdots \\
\pi^{(m+1)/2}\otimes Y_1^{q^{m+n-1}} & \pi^{(m+1)/2}\otimes Y_2^{q^{m+n-1}} & \cdots & \pi^m\otimes Y_n^{q^{m+n-1}}
\end{pmatrix}
\]
with respect to the basis $e\otimes 1,\dots,F^{n-1}e\otimes 1$.   Then for $j=m,\dots,m+n-1$ we have $\psi(F^je\otimes x^{q^j})=\left(F^j\otimes\tau^j\right)(W)$.  The determinant of the matrix in Eq.~\eqref{psimatrix} takes the form $1+\pi^m\otimes N(Y_1,\dots,Y_n)^{q^m}$ modulo $\pi^{m+1}\OO_{\hat{K}^{\nr}}\otimes R$, for a polynomial $N(Y_1,\dots,Y_n)\in \FF_q\powerseries{Y_1,\dots,Y_n}$.  We have
\[\bigwedge_{j=m}^{m+n-1}\psi(F^je\otimes x^{q^j})=\left(\det\psi\right)\left(\pi^{-m} f\otimes x^{(1+q+\dots+q^{n-1})m}\right).\]
Thus the coefficient of $f$ in $\bigwedge_{j=m}^{m+n-1}\left(F^j\otimes\tau^j\right)(W)$ is $N(Y_1,\dots,Y_n)^{q^m}x^{(1+q+\dots+q^{n-1})m}$.
Combining this with Lemma~\ref{nonconstant} gives

\begin{prop}\label{congruenceN} We have the congruence
\[ \delta(X_1,\dots,X_n)\equiv t+N(Y_1,\dots,Y_n)^{q^m}t^{q^m} \pmod{t^{q^m+\varepsilon}R}\]
for some $\varepsilon>0$.
\end{prop}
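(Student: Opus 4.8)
The plan is to obtain the statement by combining Lemma~\ref{nonconstant} with the computation of $\det\psi$ recalled immediately above, followed by a short comparison of sizes. Here ``coefficient of $f$'' refers to the $i=0$ component in the expansion $\sum_{i\in\Z}F^if\otimes(\,\cdot\,)^{q^i}$ of an $F\otimes\tau$-invariant element, so that $\delta(X_1,\dots,X_n)$ is, up to the unit $\varepsilon=\det(\alpha_i^{q^j})$, the coefficient of $f$ in $U_0\wedge\cdots\wedge U_{n-1}$. The first step is to isolate the $Y$-constant part of $\delta(X_1,\dots,X_n)$, regarded as a fractional power series in $Y_1,\dots,Y_n$ over $\OO_{\C}$: setting $Y_1=\dots=Y_n=0$ in~\eqref{XfromY} turns each $X_i$ into $x_i$, so this $Y$-constant part is $\delta(x_1,\dots,x_n)$, which equals $t$ by the very definition of $t$ as the element of $\wedge\tilde{G}(\OO_{\C})$ underlying $x_1\wedge\cdots\wedge x_n$. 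This accounts for the summand $t$, and shows that any $Y$-constant contribution surfacing later must reassemble into $t$.

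Next I would handle the $Y$-nonconstant terms. By Lemma~\ref{nonconstant}, modulo $x^{q^{m+n}}R$ the nonconstant part of the coefficient of $f$ in $U_0\wedge\cdots\wedge U_{n-1}$ (equivalently of $\delta$, since $\varepsilon$ is a unit) coincides with the nonconstant part of the coefficient of $f$ in $\bigwedge_{i=m}^{m+n-1}(F^i\otimes\tau^i)(W)$. Using the defining property $\psi(F^je\otimes x^{q^j})=(F^j\otimes\tau^j)(W)$ for $j=m,\dots,m+n-1$ together with the multiplicativity of the determinant on top exterior powers,
\[
\bigwedge_{j=m}^{m+n-1}(F^j\otimes\tau^j)(W)=(\det\psi)\cdot\bigwedge_{j=m}^{m+n-1}(F^je\otimes x^{q^j}).
\]
Since $F^me,\dots,F^{m+n-1}e$ are $n$ consecutive iterates of $F$ applied to $e$, the relation $F^ne=\pi^{-1}e$ gives $\bigwedge_{j=m}^{m+n-1}F^je=\pm\pi^{-m}f$, hence $\bigwedge_{j=m}^{m+n-1}(F^je\otimes x^{q^j})=\pm\pi^{-m}f\otimes t^{q^m}$. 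Feeding in $\det\psi\equiv 1+\pi^m N(Y_1,\dots,Y_n)^{q^m}\pmod{\pi^{m+1}\OO_{\hat{K}^{\nr}}\otimes R}$, the $\pi^{-m}$ cancels the $\pi^m$ in the leading correction term, and the coefficient of $f$ in $\bigwedge_{i=m}^{m+n-1}(F^i\otimes\tau^i)(W)$ becomes $N(Y)^{q^m}t^{q^m}$, plus a $Y$-constant term (absorbed into the $t$ above) plus an error in $\pi\, t^{q^m}R$.

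It then remains to collapse the two error modules $x^{q^{m+n}}R$ and $\pi\,t^{q^m}R$ into a single $t^{q^m+\varepsilon}R$. From $\abs{\pi}=\abs{t}^{q-1}$ one gets $\pi\,t^{q^m}R\subset t^{q^m+q-1}R$; from $\abs{x}^{q^n-1}=\abs{\pi}=\abs{t}^{q-1}$ one gets $\abs{x^{q^{m+n}}}=\abs{t}^{\,q^{m+n}(q-1)/(q^n-1)}$, and this exponent exceeds $q^m$ because $q^n(q-1)>q^n-1$ for every prime power $q$; hence $x^{q^{m+n}}R\subset t^{q^m+\varepsilon_0}R$ for some $\varepsilon_0>0$. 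Taking $\varepsilon=\min(q-1,\varepsilon_0)$ yields the asserted congruence.

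Since Lemma~\ref{nonconstant} and the shape of $\det\psi$ are already in hand, the only content left is organizational, and that is where I expect the work to concentrate: one must line up the powers of $\pi$ --- those coming from $F^ne=\pi^{-1}e$, from the entries of the matrix defining $\psi$, and from $\det\psi$ --- so that the $Y$-nonconstant part emerges exactly as $N(Y)^{q^m}t^{q^m}$ with no surviving term of size $\geq\abs{t}^{q^m}$, and so that the $Y$-constant debris (the ``$1$'' in $\det\psi$, the factor $\pm\pi^{-m}f\otimes t^{q^m}$, and the unit $\varepsilon=\det(\alpha_i^{q^j})$) is genuinely subsumed into the constant term $t$ rather than perturbing it. This requires careful attention to the $Y$-grading throughout the exterior-algebra manipulation, but introduces no idea beyond the two results just cited.
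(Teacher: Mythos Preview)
Your argument follows the paper's exactly: isolate the $Y$-constant part $t$, invoke Lemma~\ref{nonconstant} for the nonconstant part, compute $\bigwedge_{j=m}^{m+n-1}(F^j\otimes\tau^j)(W)$ via $\det\psi$, and then compare valuations to absorb both error terms into $t^{q^m+\varepsilon}R$; the paper compresses all of this into the single clause ``Combining this with Lemma~\ref{nonconstant} gives,'' so your write-up is in fact more detailed than the original. One small imprecision (which the paper shares): in $\bigwedge_{j=m}^{m+n-1}(F^je\otimes x^{q^j})$ the tensor factor is $x^{q^m(1+q+\cdots+q^{n-1})}$, and this agrees with $t^{q^m}$ only up to the unit $\varepsilon^{q^m}\in\OO_{\C}^\times$ and higher-order terms---harmless for the application to Prop.~\ref{Zreduction}, but worth tracking if you want the constant in front of $N(Y)^{q^m}$ to be exactly $1$.
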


Recall from Eq.~\ref{Acirc} that $A_{\OO_C}^{\circ}=\OO_C\powerseries{X_1^{1/q^\infty},\dots,X_n^{1/q^\infty}}/(\delta(X_1,\dots,X_n)^{1/q^r}-t^{1/q^r})_{r\geq 0}$.  Let $\mathcal{Z}=\mathcal{Y}\cap\mathcal{M}^{\ad}_{\overline{\eta}}$ and $\mathcal{Z}^{\circ}=\mathcal{Y}\cap\mathcal{M}^{\circ,\ad}_{\overline{\eta}}$.
Let $S'=A_{\OO_C}^{\circ}\tatealgebra{Y_1,\dots,Y_n}$.   Then $\mathcal{Z}^{\circ}=\Spa(S,S^+)$, where $S=S'[1/\pi]$, and $S^+$ is the integral closure of $S'$ in $S$.  Let $d=(\delta(X_1,\dots,X_n)-t)/t^{q^m}$.  We have
\[ S'=\OO_{\C}\tatealgebra{Y_1^{1/q^\infty},\dots,Y_n^{1/q^\infty}}/\left(d^{1/q^r}\right)_{r\geq 0}
\]
Since this ring is perfect, it is integrally closed in $S$, and so $S^+=S'$.  From this description of $S^+$ and the congruence in Prop. \ref{congruenceN} we get a description of $\Spec (S^+\otimes_{\OO_C} \overline{\FF}_q)$, which is the reduction of the affinoid $\mathcal{Z}^{\circ}$.  

\begin{prop}
\label{Zreduction}
The reduction of $\mathcal{Z}^{\circ}$ is the perfection of the nonsingular variety $N(Y_1,\dots,Y_n)=0$.
\end{prop}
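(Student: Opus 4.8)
The plan is to reduce the presentation $S^{+}=S'=\OO_{\C}\tatealgebra{Y_1^{1/q^\infty},\dots,Y_n^{1/q^\infty}}/(d^{1/q^{r}})_{r\ge 0}$ obtained just above modulo the maximal ideal $\mathfrak{m}$ of $\OO_{\C}$, and to identify the result with the perfect closure of $\overline{\FF}_q[Y_1,\dots,Y_n]/(N(Y_1,\dots,Y_n))$, using the approximation of $d=(\delta(X_1,\dots,X_n)-t)/t^{q^{m}}$ furnished by Prop.~\ref{congruenceN}. The special fiber in question is $\Spec\bigl(S^{+}\otimes_{\OO_{\C}}\overline{\FF}_q\bigr)$, so everything is a matter of computing this quotient.

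First I would record the behaviour of the coefficient ring under reduction. Since $\OO_{\C}$ is a valuation ring of rank one with residue field $\overline{\FF}_q$, and since the coefficients of an element of $\OO_{\C}\tatealgebra{Y_1^{1/q^\infty},\dots,Y_n^{1/q^\infty}}$ tend to $0$, only finitely many of those coefficients fail to lie in $\mathfrak{m}$; hence reduction induces an isomorphism
\[ \OO_{\C}\tatealgebra{Y_1^{1/q^\infty},\dots,Y_n^{1/q^\infty}}\otimes_{\OO_{\C}}\overline{\FF}_q\ \isom\ \overline{\FF}_q[Y_1^{1/q^\infty},\dots,Y_n^{1/q^\infty}], \]
the perfect closure of the polynomial ring $\overline{\FF}_q[Y_1,\dots,Y_n]$. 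In particular the special fiber is of finite type over $\overline{\FF}_q$ up to perfection, so it makes sense to speak of ``the variety $N=0$''.

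Next I would compute the image $\overline{d}$ of $d$. By Prop.~\ref{congruenceN} we have $\delta(X_1,\dots,X_n)=t+N(Y_1,\dots,Y_n)^{q^{m}}\,t^{q^{m}}+t^{q^{m}+\varepsilon}(\cdots)$ for some $\varepsilon>0$, hence $d=N(Y_1,\dots,Y_n)^{q^{m}}+t^{\varepsilon}(\cdots)$; as $t$ is topologically nilpotent in $\OO_{\C}$ we get $t^{\varepsilon}\in\mathfrak{m}$, so $\overline{d}=N(Y_1,\dots,Y_n)^{q^{m}}$. Because $N$ has coefficients in $\FF_q$, raising to the $q^{m}$-th power only replaces each $Y_i$ by $Y_i^{q^{m}}$, and consequently the ideal generated in $\overline{\FF}_q[Y_1^{1/q^\infty},\dots,Y_n^{1/q^\infty}]$ by $\overline{d}$ together with all of its $q$-power roots is the perfection of $(N(Y_1,\dots,Y_n))$, namely $\{f : f^{q^{s}}\in (N)\text{ for some }s\ge 0\}$. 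Passing to quotients then gives
\[ S^{+}\otimes_{\OO_{\C}}\overline{\FF}_q\ \isom\ \bigl(\overline{\FF}_q[Y_1,\dots,Y_n]/(N(Y_1,\dots,Y_n))\bigr)^{\perf}, \]
which is exactly the perfection of the affine scheme $\{N=0\}$.

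It remains to check that $\{N=0\}$ is nonsingular, and this is the step I expect to carry the genuine content. Here I would return to the determinant expansion preceding Prop.~\ref{congruenceN}: a valuation estimate on the entries of the matrix $\psi$ (using that $m$ is odd) pins down which permutations contribute to the coefficient of $\pi^{m}$ in $\det\psi$, exhibiting $N$ explicitly as a low-degree polynomial -- a ``diagonal'' part coming from the fixed points together with a ``transposition'' part -- after which the Jacobian criterion can be applied directly; this verification is carried out in \cite{BW}. Granting Prop.~\ref{congruenceN} and this input, the proposition itself is essentially formal; the only points needing care are that reduction of the $\OO_{\C}$-Tate algebra really yields a (perfected) polynomial ring, and that the perfect ideal cut out by $d$ is precisely the perfection of $(N)$ rather than something larger.
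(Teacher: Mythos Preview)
Your proposal is correct and follows exactly the approach the paper has in mind: the paper's ``proof'' is just the sentence preceding the proposition, asserting that the description of $S^{+}$ together with Prop.~\ref{congruenceN} immediately gives the reduction, and you have simply written out those details (reduction of the perfect Tate algebra, the congruence $\overline{d^{1/q^r}}=N^{q^{m-r}}$, and the identification of the resulting ideal with the perfection of $(N)$). The nonsingularity of $\{N=0\}$ is likewise not proved in the paper---it is asserted here and established by the explicit computation you anticipate, so deferring it to \cite{BW} (or to the explicit formula for $N$ given just after the proposition) is appropriate.
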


If $m$ is even, the definition of the affinoid $\mathcal{Y}$ is as follows.  Identify $M_n(K)$ with $\End_K(L)$, so that $M_n(K)$ is spanned over $L$ by $1,\sigma,\dots,\sigma^{n-1}$, where $\sigma\in \Gal(L/K)$ is the Frobenius automorphism.  For $j=1,\dots,n-1$, let $x^{\sigma^j}_i$ be the $i$th coordinate of $x^\sigma$.  Define variables $Y_1,\dots,Y_n\in B[1/\pi]$ by 
\[ X_i = x_i + (x_i^\sigma Y_1)^{q^{mn/2}}+\dots+(x_i^{\sigma^{n-1}}Y_{n-1})^{q^{mn/2}}+(x_iY_n)^{q^{mn}},\]
and then define the affinoid $\mathcal{Y}\subset\tilde{G}^{n,\ad}_{\overline{\eta}}$ by the conditions $\abs{Y_i}\leq 1$.  Then Props. \ref{congruenceN} and \ref{Zreduction} go through as in the case of $m$ odd, but possibly with a different polynomial $N$.  In all cases, $N(Y_1,\dots,Y_n)$ is the coefficient of $\pi^m$ in the determinant of the matrix
\[ \text{Id}_{n\times n} + 
\begin{pmatrix}
1+\pi^mY_n & \pi^{\floor{m/2}}Y_1 & \cdots & \pi^{\floor{m/2}}Y_n \\
\pi^{\ceil{m/2}}Y_{n-1}^q & 1+\pi^mY_n^q & \cdots & \pi^{\floor{m/2}} \\
\vdots & \vdots & \ddots & \vdots \\
\pi^{\ceil{m/2}}Y_1^{q^{n-1}} & \pi^{\ceil{m/2}}Y_2^{q^{n-1}} & \cdots & 1+\pi^mY_n^{q^{n-1}}
\end{pmatrix}
\]
Observe that $N(Y_1,\dots,Y_n)$ only depends on whether $m=1$ or $m\geq 2$.  (Furthermore, when $n=2$, we have $N(Y_1,Y_2)=Y_1+Y_1^q-Y_2^{q+1}$ no matter the value of $m$.)

\subsection{The stabilizer of $\mathcal{Y}$}

As in the previous subsection, $L/K$ is the unramified extension of degree $n$, and $x_{\CM}\in\mathcal{M}(\OO_C)$ is a CM point for $L$.  Then we have the affinoids $\mathcal{Y}\subset \tilde{G}^{n,\ad}_{\overline{\eta}}$ and $\mathcal{Z}^{\circ}\subset\mathcal{M}^{\circ,\ad}_{\overline{\eta}}$.  We record the relationship between these affinoids and the actions of the triple product group $\GL_n(K)\times D^\times\times W_L$.  The details aren't difficult to work out because we have explicit formulas for the actions of $\GL_n(K)$ and $D^\times$ on $\tilde{G}^n$ and for the action of $W_K$ on the CM point $x_{\CM}$.  First we describe the stabilizer of $\mathcal{Y}$ inside the groups $\GL_n(K)$ and $D^\times$.

The point $x_{\CM}$ induces embeddings of $\OO_L$ into $M_n(\OO_K)$ and $\OO_D$.    Let $C$ (respectively, $C_D$) be the complement of $\OO_L$ inside of $M_n(\OO_K)$ (respectively, $\OO_D$) with respect to the natural trace pairing.  If we identify $M_n(\OO_K)$ with the ring of $\OO_K$-linear endomorphisms of $\OO_L$, then $C$ is the span of $\sigma,\dots,\sigma^{n-1}$, where $\sigma$ is the Frobenius in $\Gal(L/K)=\Gal(\FF_{q^n}/\FF_q)$.  Meanwhile, $C_D$ is the span of $\varpi,\dots,\varpi^{n-1}$.

Define subgroups $U\subset \GL_n(\OO_K)$ and $U_D\subset \OO_D^\times$ by
\begin{eqnarray*}
U &=& 1+\gp_L^m+\pi_L^{\ceil{m/2}}C \\
U_D &=& 1+\gP_L^m + \pi_L^{\floor{m/2}}C_D.
\end{eqnarray*}
Then $U$ and $U_D$ map $\mathcal{Y}$ onto itself.  

% \begin{lemma}
% \label{productstructure}
% The quotient $(U\times U_D)/(U^1\times U_D^1)$ is isomorphic to the group of formal expressions $1+\alpha_1e_1+\dots+\alpha_ne_n$, $\alpha_i\in\OO_L/\pi=FF_{q^n}$, which multiply according to the following rules.  We have $e_i\alpha=\alpha^{q^i}e_i$, all $\alpha\in\FF_{q^n}$.  If $m=1$, then the product $e_ie_j$ is $e_{i+j}$ if $i+j\leq n$, and is $0$ otherwise.   If $m\geq 1$, then $e_ie_j$ is $e_n$ if $i+j=n$, and is $0$ otherwise.
% \end{lemma}

% \begin{proof}  This is a simple calculation.  An interesting feature is that the roles of $\GL_n(K)$ and $D^\times$ alternate based on the parity of $m$.  If $m$ is even, then $e_i$ is the image of $\pi^{m/2}\sigma^i\in\GL_n(K)$.  If $m$ is odd, then $e_i$ is the image of $\pi^{(m-1)/2}\varpi^i\in D^\times$.
% \end{proof}

%Recall the homomorphism $j\from W_K\to D^\times$ from Prop. \ref{jrec}.  Also define a homomorphism $i\from W_K\to\GL_n(K)$ by
%\[ i\from W_K\to \Gal(L/K)\injects \Aut_K(L)=\GL_n(K). \]
Recall that $\mathcal{S}$ is the stabilizer of $x$ in $\GL_n(K)\times D^\times\times W_K$.  From Prop. \ref{stabx} it is easy to see that $\mathcal{S}$ normalizes $U\times U_D\times\set{1}$.  Let $\mathcal{J}\subset\GL_n(K)\times D^\times\times W_K$ be the subgroup generated by $U\times U_D\times\set{1}$ and $\mathcal{S}$.  
%\begin{enumerate}
%\item $\Delta(L)^\times(U\times U_D)\times\set{1}$
%\item $\mathcal{L}^\times \times\set{1}$
%\item $\set{(j(w),w)\biggm\vert w\in W_{L/K}}$, where $j\from W_{L/K}\to L^\times\backslash \tilde{\mathcal{N}}$ is the isomorphism from Eq. \eqref{jproperty} 
%\end{enumerate}

\begin{prop} Every element of $\mathcal{J}$ maps $\mathcal{Y}$ onto itself.  Conversely, an element of $\GL_n(K)\times D^\times\times W_{K}$ which maps a point in $\mathcal{Y}$ onto another point in $\mathcal{Y}$ must lie in $\mathcal{J}$.
\end{prop}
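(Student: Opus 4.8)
The plan is to prove the two inclusions separately. That $\mathcal J$ preserves $\mathcal Y$ will follow by checking generators: $\mathcal J$ is generated by $U\times U_D\times\set{1}$, which maps $\mathcal Y$ onto itself by the very construction of $U$ and $U_D$, together with the stabiliser $\mathcal S$ of $x_{\CM}$, so it suffices to treat the two families of generators of $\mathcal S$ furnished by Prop.~\ref{stabx}. For the converse I will analyse directly which triples $(g,b,w)$ can carry one point of $\mathcal Y$ to another, using the explicit form of the action in \S\ref{groupactions} together with the fact that the $q$-power exponents in the change of coordinates \eqref{XfromY} were chosen to match the congruence conditions defining $U$ and $U_D$.

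For the forward inclusion I work in the coordinates of \S\ref{groupactions}, where $\tilde G(\OO_\C)$ is the set of topologically nilpotent elements of $\OO_\C$ and $\GL_n(K),D^\times,W_K$ act by the explicit formulas recorded there, and I use the defining substitution \eqref{XfromY} (and its $m$-even analogue). A generator of $\mathcal S$ of the first kind is a diagonal $(\alpha,\alpha,1)$ with $\alpha\in L^\times$; since $L/K$ is unramified, $\pi_L=\pi$ is central, and $\alpha$ normalises the complements $C$ and $C_D$ modulo $\OO_L^\times$ (because $\alpha\sigma^r=\sigma^r\sigma^{-r}(\alpha)$ and $\sigma^{-r}(\alpha)\in\OO_L^\times$). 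As $(\alpha,\alpha,1)$ fixes $x_{\CM}$, substituting it into the defining equations returns a system of the same shape with each $Y_i$ replaced by a unit times some $Y_{i'}$, so $\abs{Y_i}\le 1$ is preserved. For a generator $(j(w),w)$ of the second kind, $w$ acts on $\OO_\C$ through its Galois action and on the coordinate by a power of Frobenius, while $j(w)\in\tilde{\mathcal N}$ acts through $L^\times$ and $\Gal(L/K)$; using $x_{\CM}^{j(w)}=x_{\CM}^{w^{-1}}$ from Eq.~\eqref{jproperty} together with the compatibility of this Frobenius twist with the $q$-power exponents in \eqref{XfromY}, one again gets a system of the same shape, whence $\mathcal Y$ is preserved. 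Reconciling the Frobenius twist with that pattern of exponents is the one slightly delicate check of this half.

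For the converse, suppose $(g,b,w)$ carries $y\in\mathcal Y$ to $y'\in\mathcal Y$. Since $\mathcal S$ maps onto $W_K$ via $(g,b,w)\mapsto w$ and onto $\tilde{\mathcal N}$ via the first two coordinates (Prop.~\ref{stabx}), after multiplying by an element of $\mathcal S$ I may reduce to the case $w=1$ and $(g,b)\in\GL_n(\OO_K)\times\OO_D^\times$. In this reduced situation I would express $(g,b)\cdot y$ in the $Y$-coordinates using the explicit action formulas and \eqref{XfromY}, and compare term by term with the condition that $(g,b)\cdot y$ again have all $\abs{Y_i}\le 1$. Because $y-x_{\CM}$ is, by \eqref{XfromY}, small ``of a prescribed graded type'' --- its $i$-th coordinate a sum of $q^{a}$-th powers with $a$ ranging over $\set{(m-1)n/2+1,\dots,(m-1)n/2+n-1,mn}$ --- and because $\GL_n(\OO_K)\times\OO_D^\times$ preserves the pertinent valuation data on $\tilde G^n$, the elements $(g,b)$ for which $(g,b)\cdot y\in\mathcal Y$ are exactly those congruent to $1$ modulo $\gp_L^m+\pi_L^{\ceil{m/2}}C$ and $\gP_L^m+\pi_L^{\floor{m/2}}C_D$ respectively --- that is, the elements of $U\times U_D$. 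This is precisely why those congruence conditions are built into the definitions of $U$ and $U_D$, and it gives $(g,b,w)\in\mathcal J$.

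The main obstacle is this last quantitative comparison: the bookkeeping that matches the $q$-power exponents occurring in \eqref{XfromY} (equivalently, the polyradius of $\mathcal Y$ and the determinant approximation of Prop.~\ref{congruenceN}) against the precise congruence subgroups $U\subset\GL_n(\OO_K)$ and $U_D\subset\OO_D^\times$, and the parallel argument showing that the $W_K$-component of an admissible triple must be compensated exactly by its $\GL_n(K)\times D^\times$-component so that one lands in $\mathcal S$ itself rather than merely near it. Making the initial reduction of the converse rigorous also requires knowing that $\GL_n(\OO_K)\times\OO_D^\times$ preserves enough of the valuation structure of $\tilde G^n$; this is transparent from the coordinate-module picture of \S\ref{multilinear}, where the action becomes linear over $K\hat{\otimes}_{\overline{\FF}_q}(-)$.
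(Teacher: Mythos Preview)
The paper gives no proof of this proposition: it is stated and the text moves on, relying on the earlier remark that ``the details aren't difficult to work out because we have explicit formulas for the actions of $\GL_n(K)$ and $D^\times$ on $\tilde G^n$ and for the action of $W_K$ on the CM point $x_{\CM}$.'' Your outline is exactly the kind of argument the paper has in mind, and your identification of the quantitative bookkeeping in the converse as the real content is accurate.

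One small correction to your reduction step in the converse. You write that after arranging $w=1$ you may further use that ``$\mathcal S$ maps onto $\tilde{\mathcal N}$ via the first two coordinates'' to reduce to $(g,b)\in\GL_n(\OO_K)\times\OO_D^\times$. But once $w=1$, the only elements of $\mathcal S$ still available are the diagonal $(\alpha,\alpha,1)$ with $\alpha\in L^\times$; the elements of $\mathcal S$ projecting to nontrivial parts of $\tilde{\mathcal N}$ have nontrivial $W_K$-component. The integrality reduction therefore does not come from $\mathcal S$ but from a direct valuation argument: for any point of $\mathcal Y$ one has $\abs{X_i}=\abs{x_i}=\abs{x}$, and the explicit formulas for the actions show that $b\in D^\times$ scales $\abs{X_i}$ by $\abs{x}^{q^{v_D(b)}-1}$ while $g\in\GL_n(K)$ alters the tuple of $\abs{X_i}$ in a manner governed by the valuations of the entries of $g$. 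Preserving the uniform valuation profile $\abs{X_i}=\abs{x}$ thus forces $b\in\OO_D^\times$ and, after adjusting by a diagonal power of $\pi$ (which lies in $\mathcal S$), $g\in\GL_n(\OO_K)$. With that fix, your plan for the converse --- comparing the $q$-power exponents in \eqref{XfromY} against the congruence conditions defining $U$ and $U_D$ --- is correct and is precisely the computation the paper is leaving to the reader.
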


\subsection{Description of the group actions on $\overline{\mathcal{Y}}$.}

We can also give a description of the actions of $U$ and $U_D$ on the reduction $\overline{\mathcal{Y}}=\Spec \overline{\FF}_q[Y_1^{1/q^\infty},\dots,Y_n^{1/q^\infty}]$.  
%Let $U^1\subset U$ and $U_D^1\subset U_D$ be the subgroups
%\begin{eqnarray*}
%U^1&=& 1+\gp_L^{m+1}+\pi_L^{\ceil{(m+1)/2}}C\\
%U^1_D&=& 1+\gp_L^{m+1}+\pi_L^{\floor{(m+1)/2}}C_D
%\end{eqnarray*}
%Let $\mathbf{U}$ be the affine group variety over $\overline{\FF}_q$ suggested by Lemma \ref{productstructure}.  That is, for an $\overline{\FF}_q$-algebra $R$, $\mathbf{U}(R)$ is the group of formal expressions $1+\alpha_1e_1+\dots+\alpha_ne_n$, with $\alpha_i\in R$.  Then
%%%I'm not sure our variety X should naturally be thought of as a subvariety of a group variety!
Define a subring $\mathcal{L}$ of $M_n(\OO_K)\times \OO_D$ by
\[ \mathcal{L} = \Delta(\OO_L)+\left(\gp_L^m\times\gp_L^m\right)+\left(\gp_L^{\ceil{m/2}}C\times\gp_L^{\ceil{(m-1)/2}}C_D\right). \]
Then $\mathcal{L}$ is an order in $M_n(L)\times D$ which contains $\Delta(\OO_L)$.

Let $\gP\subset\mathcal{L}$ be the double-sided ideal
\[ \gP=\Delta(\gp_L)+\left(\gp_L^{m+1}\times\gp_L^{m+1}\right)+\left(\gp_L^{\ceil{(m+1)/2}}C\times \gp_L^{\ceil{m/2}}\right).\]

\begin{lemma}
\label{Sring}
The quotient $S=\mathcal{L}/\gP$ is an $\FF_{q^n}$-algebra of dimension $n+1$.  It admits a basis $1,e_1,\dots,e_n$.  Multiplication in $S$ is determined by the following rules.  We have $e_i \alpha = \alpha^{q^i} e_i$, all $\alpha\in\FF_{q^n}$.  If $m=1$, then the product $e_ie_j$ is $e_{i+j}$ if $i+j\leq n$, and is $0$ otherwise.   If $m\geq 1$, then $e_ie_j$ is $e_n$ if $i+j=n$, and is $0$ otherwise.
\end{lemma}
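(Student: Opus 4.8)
The plan is to make the two embeddings $\OO_L \injects M_n(\OO_K)$ and $\OO_L \injects \OO_D$ completely explicit, describe $\mathcal{L}$ and $\gP$ as spaces of $\OO_L$-coefficient tuples subject to valuation bounds, and then read off both the additive structure of $S$ and the multiplication table one slot at a time. Since $L/K$ is unramified, $\pi_L = \pi$ and $\gp_L = \pi\OO_L$. Identifying $M_n(\OO_K)$ with $\End_{\OO_K}(\OO_L)$, let $\sigma$ be the arithmetic Frobenius of $\OO_L = \FF_{q^n}\powerseries{\pi}$, acting on coefficients; it is $\OO_K$-linear with $\sigma^n = 1$ and $\sigma a = \sigma(a)\sigma$ for $a \in \OO_L$, so $M_n(\OO_K) = \bigoplus_{i=0}^{n-1}\OO_L\sigma^i$ with $\OO_L = \OO_L\sigma^0$ and $C = \bigoplus_{i=1}^{n-1}\OO_L\sigma^i$. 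Likewise $\OO_D = \bigoplus_{i=0}^{n-1}\OO_L\varpi^i$ with $\varpi a = \sigma(a)\varpi$, $\varpi^n = \pi$, $\OO_L = \OO_L\varpi^0$, and $C_D = \bigoplus_{i=1}^{n-1}\OO_L\varpi^i$. Writing an element of $M_n(\OO_K)\times\OO_D$ as a tuple $(a_0,\dots,a_{n-1};b_0,\dots,b_{n-1})$ of $\OO_L$-coefficients, one checks directly that $\mathcal{L}$ is the set of tuples with $a_0$ arbitrary, $b_0 \equiv a_0 \pmod{\gp_L^m}$, and $a_i \in \gp_L^{\ceil{m/2}}$, $b_i \in \gp_L^{\ceil{(m-1)/2}}$ for $1 \le i \le n-1$; while $\gP$ is the set of tuples with $a_0 \in \gp_L$, $b_0 \equiv a_0 \pmod{\gp_L^{m+1}}$, and $a_i \in \gp_L^{\ceil{(m+1)/2}}$, $b_i \in \gp_L^{\ceil{m/2}}$.

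From this description I would first reconfirm, by inspecting the valuation bounds, that $\mathcal{L}$ is closed under multiplication --- using $\sigma^n = 1$, $\varpi^n = \pi$, that $\sigma^i$ and $\varpi^i$ preserve $\pi$-adic valuations, and the inequalities $2\ceil{m/2} \ge m$, $\ceil{m/2} \ge \ceil{(m-1)/2}$ --- and that $\gP$ is a two-sided ideal, the only delicate point being that a product of two ``off-diagonal'' summands returns to $\gP$, which is exactly the identity $\ceil{m/2} + \ceil{(m+1)/2} = m+1$. I then compute $S = \mathcal{L}/\gP$ as an $\FF_{q^n}$-module slot by slot: the $a_0$-slot contributes $\OO_L/\gp_L = \FF_{q^n}\cdot 1$; the ``antidiagonal'' direction $b_0 - a_0$ contributes $\gp_L^m/\gp_L^{m+1} = \FF_{q^n}\cdot e_n$; and for each $1 \le i \le n-1$ exactly one of the slots $a_i$, $b_i$ survives, because $\ceil{(m+1)/2} = \ceil{m/2}$ when $m$ is odd (killing the $\sigma^i$-slots) and $\ceil{(m-1)/2} = \ceil{m/2}$ when $m$ is even (killing the $\varpi^i$-slots). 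This produces $e_i$ as the class of $\pi^{(m-1)/2}\varpi^i$ (for $m$ odd) or of $\pi^{m/2}\sigma^i$ (for $m$ even), hence $\dim_{\FF_{q^n}} S = 1 + (n-1) + 1 = n+1$ with basis $1, e_1, \dots, e_n$.

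Finally I would extract the multiplication table by multiplying these representatives and reducing modulo $\gP$. The relation $e_i\alpha = \alpha^{q^i}e_i$ falls out of $\varpi^i a = \sigma^i(a)\varpi^i$ (resp.\ $\sigma^i a = \sigma^i(a)\sigma^i$), once one notes that replacing the chosen lift of $\alpha \in \FF_{q^n}$ to $\OO_L$ perturbs the product by an element of $\gP$. For $i, j \in \{1,\dots,n-1\}$ with $m$ odd, $e_ie_j$ is represented by $\pi^{m-1}\varpi^{i+j}$; via $\varpi^n = \pi$ this sits in the $e_n$-slot with valuation $m$ when $i+j = n$, and in an $e_{i+j}$- or $e_{i+j-n}$-slot with valuation $m-1$ or $m$ when $i+j \ne n$. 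Comparing these valuations with the exponent $\ceil{m/2}$ that cuts out $\gP$ there, the product is $0$ in $S$ unless $i+j = n$ --- except precisely for $m = 1$, where the valuation $m-1 = 0$ is too shallow to be absorbed and $e_ie_j = e_{i+j}$ persists for every $i+j \le n$. For $m$ even the computation is identical with $\sigma$ in place of $\varpi$, but the base valuation $m/2$ now kills every $i+j < n$ product, and every product involving $e_n$ vanishes because $e_n$ sits in the deepest slot. This gives exactly the two stated cases.

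The hard part is purely bookkeeping: the roles of $C$ and $C_D$ interchange with the parity of $m$, so essentially every step splits into two cases, and both the two-sidedness of $\gP$ and the collapse of the off-diagonal products rest on pinning down the three exponents $\ceil{(m-1)/2}$, $\ceil{m/2}$, $\ceil{(m+1)/2}$ and their pairwise sums precisely. The borderline $m = 1$ --- the unique place where one of these exponents degenerates to $0$ --- is exactly what forces the different multiplication rule there.
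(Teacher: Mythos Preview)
Your approach is correct and matches the paper's: the paper's proof is a one-line sketch (``This is a simple calculation'') that identifies the $e_i$ as the images of $(0,\pi^{(m-1)/2}\varpi^i)$ for $m$ odd and $(\pi^{m/2}\sigma^i,0)$ for $m$ even, which is exactly the parity-dependent choice you arrive at. You have simply written out in full the bookkeeping that the paper leaves implicit.
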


\begin{proof}  This is a simple calculation.  An interesting feature is that the roles of $M_n(K)$ and $D$ alternate based on the parity of $m$.  If $m$ is even, then $e_i$ is the image of $(\pi^{m/2}\sigma^i,0)$.  If $m$ is odd, then $e_i$ is the image of $(0,\pi^{(m-1)/2}\varpi^i)$.
\end{proof}

(The family of orders which we have called $\mathcal{L}$ is best understood from the point of view of harmonic analysis on $M_n(K)\times D$.  See \cite{WeinsteinFourier} for a thorough discussion of these orders in the case $n=2$.)  

Let $\mathbf{U}$ be the affine group variety over $\overline{\FF}_q$ whose points over an $\overline{\FF}_q$-algebra $R$ are formal expressions $1+\alpha_1e_1+\dots+\alpha_ne_n$, with $\alpha_i\in R$.  The group operation is determined by the same rules as in Lemma~\ref{Sring}.  Thus there is a natural surjection $U\times U_D\to \mathbf{U}(\FF_{q^n})$.  Of course we have $\mathbf{U}=\Spec \overline{\FF}_q[Y_1,\dots,Y_n]$, for the obvious choice of coordinates $Y_1,\dots,Y_n$.  Let us identify $\overline{\mathcal{Y}}$ with the perfect closure of $\mathbf{U}$:  $\overline{\mathcal{Y}}=\mathbf{U}^{\text{perf}}=\overline{\FF}_q[Y_1^{1/q^\infty},\dots,Y_n^{1/q^\infty}]$.  
%%is perfect closure the right term?

\begin{prop} 
\label{Jaction}
The (right) action of $\mathcal{J}$ on $\overline{\mathcal{Y}}$ is determined by the following properties.  The subgroup $U\times U_D\times\set{1}$ of $\mathcal{J}$ acts on $\overline{\mathcal{Y}}=\mathbf{U}^{\text{perf}}$ through its quotient $\mathbf{U}(\F_{q^n})$, which acts on $\mathbf{U}$ by right multiplication.
The action of the diagonally embedded $K^\times$ in $\GL_n(K)\times D^\times$ is trivial.  The action of elements of the form $(\alpha,\alpha,1)$ with $\alpha\in\OO_L^\times$ is through 
\[ (Y_1,\dots,Y_n)\mapsto (\overline{\alpha}^{q-1}Y_1,\dots,\overline{\alpha}^{q^{n-1}-1}Y_{n-1},Y_n), \]
where $\overline{\alpha}$ is the image of $\alpha$ in $\FF_{q^n}$.  Finally, if $\Phi\in W_K$ is a Frobenius element, then $(j(\Phi),\Phi)$ acts on $\overline{\mathcal{Y}}$ as the geometric Frobenius map $Y_i\mapsto Y_i^q$ (which is an automorphism).
\end{prop}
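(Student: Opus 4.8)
The plan is to carry everything out in the explicit coordinates of \S\ref{groupactions} together with the change of variables \eqref{XfromY}. On $\tilde G^n$ (with coordinates $X_1,\dots,X_n$) a matrix $g\in\GL_n(K)$ with $ij$th entry $\sum_{l\ge 0}a^{(ij)}_l\pi^l$ sends $X_i\mapsto\sum_{l,j}a^{(ij)}_lX_j^{q^l}$, an element $\sum_l a_l\varpi^l\in D^\times$ sends (diagonally) $X_i\mapsto\sum_l a_lX_i^{q^l}$, and a Frobenius element of $W_K$ over the $q$th power map acts on the scalars in the natural way and sends $X_i\mapsto X_i^{q^{-1}}$. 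For each generator of $\mathcal J$ I would write down the induced transformation of the $X_i$, substitute the defining relations \eqref{XfromY} of the coordinates $Y_r$ on $\mathcal Y$ (treating $m$ odd; the $m$ even case runs identically with the stated analogue of \eqref{XfromY}), solve for the transformed $Y_r$, and then reduce modulo the maximal ideal to read off the resulting automorphism of $\overline{\mathcal Y}=\mathbf U^{\text{perf}}$ — the precision estimate behind Prop.~\ref{congruenceN} tells me exactly which terms may be discarded in the reduction.

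The heart of the matter is the subgroup $U\times U_D\times\set{1}$. The key observation is that the ``off-diagonal'' pieces $\pi_L^{\ceil{m/2}}C$ of $U$ and $\pi_L^{\floor{m/2}}C_D$ of $U_D$ act on each $X_i$ by adjoining monomials of precisely the shape appearing in \eqref{XfromY}: since $\pi$ acts on $\tilde G$ like $F^n$ (so $X\mapsto X^{q^n}$), the direction $\pi^{\floor{m/2}}\varpi^r$ corresponds to $F^{\floor{m/2}n+r}$, which is exactly the Frobenius power carved out by $Y_r$ in \eqref{XfromY}. Hence such an element translates $Y_r$ by a constant equal to an appropriate $q$-power root of the corresponding matrix entry; this constant lies in $\FF_{q^n}$ (since $\FF_{q^n}$ is perfect), the precise $q$-power being governed by the congruence $\floor{m/2}n+r\equiv r\pmod n$ so as to match the semilinearity $e_i\alpha=\alpha^{q^i}e_i$ of Lemma~\ref{Sring}, while the pieces $\gp_L^m$ act trivially on the reduction. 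Composing these ``translations with Frobenius twists'' and comparing with the multiplication law of Lemma~\ref{Sring} identifies the action with that of $\mathbf U(\FF_{q^n})$ on $\mathbf U$ by right translation. The parity dichotomy ($e_ie_j=e_{i+j}$ for all $i+j\le n$ when $m=1$, but $e_ie_j=0$ unless $i+j=n$ when $m\ge2$) should fall out automatically: for $m\ge2$ and $i+j<n$ the composite twist $F^{(\floor{m/2}n+i)+(\floor{m/2}n+j)}$ has order $\ge q^{mn}$, beyond the precision $t^{q^m+\eps}$ of Prop.~\ref{congruenceN}, so the corresponding product vanishes in the reduction, whereas for $m=1$ every partial product with $i+j\le n$ is retained.

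The remaining generators should be quick, and two of them are best handled conceptually. The diagonal $K^\times\subset\GL_n(K)\times D^\times$ acts trivially on all of $\mathcal M^{\ad}_{\overline\eta}$: the scalar action of $\pi$ through the $\GL_n$-factor coincides with its action through the $D^\times$-factor (both are multiplication by $\pi$ on each factor of $\tilde G^n$), so the inversion convention of \S\ref{groupactions} makes the two cancel. For $(\alpha,\alpha,1)$ with $\alpha\in\OO_L^\times$: this lies in $\mathcal S$, so it fixes $x_{\CM}$ and normalizes $U\times U_D\times\set{1}$ (Prop.~\ref{stabx} and the remark following it); since $\overline{\mathcal Y}$ is acted on through right translation by $\mathbf U$ and $x_{\CM}$ corresponds to the identity of $\mathbf U$, the action of $(\alpha,\alpha,1)$ must be the conjugation of $\mathbf U$ induced by conjugation on $U\times U_D$, which scales the $e_r$-direction by $\overline\alpha^{q^r}/\overline\alpha=\overline\alpha^{q^r-1}$ (using $\varpi^r\alpha=\alpha^{q^r}\varpi^r$), giving $Y_r\mapsto\overline\alpha^{q^r-1}Y_r$ for $r<n$ and $Y_n\mapsto\overline\alpha^{q^n-1}Y_n=Y_n$. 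Finally, for a Frobenius element $(j(\Phi),\Phi)$: it fixes $x_{\CM}$ by \eqref{jproperty}, it acts on the residue field $\overline{\FF}_q$ by the $q$th power map, and the extra geometric shift on the $\tilde G$-coordinate is absorbed by the compensating factor $j(\Phi)$, so chasing through \eqref{XfromY} leaves $Y_i\mapsto Y_i^q$, an automorphism of the perfect ring $\mathbf U^{\text{perf}}$. Since by Prop.~\ref{stabx} and the decomposition $W_K=\langle\Phi\rangle\ltimes I_K$ (with $I_K=I_L$, and $j(I_K)\subset\set1\times\OO_L^\times$ up to $U_D$ while $I_K$ acts trivially on $\overline{\FF}_q$) these elements together with $U\times U_D\times\set{1}$ generate $\mathcal J$, properties (A)--(D) determine the action.

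The step I expect to be the main obstacle is the second paragraph: one must carry the nonlinear substitution \eqref{XfromY} through to the precision of Prop.~\ref{congruenceN}, track carefully which Frobenius-twisted monomials survive the reduction, and verify that the induced composition law on $(Y_1,\dots,Y_n)$ reproduces the multiplication of the algebra $S=\mathcal L/\gP$ of Lemma~\ref{Sring} on the nose — including the $m=1$ versus $m\ge2$ dichotomy and the $\FF_{q^n}$-semilinearity $e_i\alpha=\alpha^{q^i}e_i$. Everything else is routine bookkeeping with the explicit formulas.
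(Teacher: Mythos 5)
First, a point of reference: the paper itself gives no proof of Prop.~\ref{Jaction} (nor of the preceding stabilizer statement); it only remarks beforehand that ``the details aren't difficult to work out because we have explicit formulas.'' So your coordinate-by-coordinate strategy --- act on the $X_i$ by the explicit formulas, invert the substitution \eqref{XfromY}, and reduce --- is exactly the intended route, and your conceptual handling of the diagonal $K^\times$ and of $(j(\Phi),\Phi)$ is fine. But the one piece of reasoning you offer for the step you yourself flag as the crux is based on a false inequality. Composing $1+be_i$ and $1+ce_j$ produces the cross term $cb^{q^{A+j}}X^{q^{2A+i+j}}$ with $A=(m-1)n/2$, whose Frobenius order is $q^{(m-1)n+i+j}$; for $i+j<n$ this is \emph{strictly less} than $q^{mn}$, so it is not ``beyond the precision of Prop.~\ref{congruenceN}'' (and that proposition concerns the determinant map, not the group action on $\mathcal{Y}$, so it is the wrong tool here in any case). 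The correct mechanism is valuation-theoretic: solving $(x\,\Delta Y_{i+j})^{q^{A+i+j}}=c'x^{q^{2A+i+j}}$ gives $\Delta Y_{i+j}=(c')^{q^{-(A+i+j)}}x^{q^{A}-1}$, and the factor $x^{q^{A}-1}$ has positive valuation precisely when $A>0$, i.e.\ when $m\geq 2$; for $m=1$ it equals $1$ and the cross term survives. That is where the $m=1$ versus $m\geq 2$ dichotomy of Lemma~\ref{Sring} actually comes from, and your argument as written does not produce it.

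Two smaller issues. The conjugation argument for $(\alpha,\alpha,1)$ only determines the action on the $\mathbf{U}(\FF_{q^n})$-orbit of $x_{\CM}$, which is a finite set of points, not Zariski dense in $\overline{\mathcal{Y}}=\mathbf{U}^{\mathrm{perf}}$; to conclude $Y_r\mapsto\overline{\alpha}^{q^r-1}Y_r$ on the whole reduction you still need to run the substitution \eqref{XfromY} through the explicit formulas (the computation is short: the $D^\times$-factor contributes $\overline{\alpha}^{\pm 1}$ to each slot and the $\GL_n$-factor contributes $\overline{\alpha}^{\mp q^{A+r}}$, whose ratio reduces to $\overline{\alpha}^{q^r-1}$ since $q^n$-powers act trivially on $\FF_{q^n}$), or else argue separately that the action is linear in the $Y_r$ modulo topologically nilpotent terms. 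Also, the $\GL_n(K)$-action on $\tilde{G}^n$ sends $X_i\mapsto\sum_{l,j}a^{(ij)}_lX_j^{q^{ln}}$, not $X_j^{q^l}$, since $\pi$ acts as $\tau^n$ on the standard height-$n$ formal $K$-vector space; you use the correct convention later, but the formula as first stated is wrong and would derail the $m$-even case, where the relevant off-diagonal directions come from the $\GL_n$ side.
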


Then of course the stabilizer of $\mathcal{Z}^{\circ}$ in $\GL_n(K)\times D^\times\times W_K$ is $\mathcal{J}^{\circ}=\mathcal{J}\cap(\GL_n(K)\times D^\times\times W_K)^{\circ}$.  Prop. \ref{Jaction} gives a description of the action of $\mathcal{J}^{\circ}$ on the reduction $\overline{\mathcal{Z}}^\circ$, which by Prop. \ref{Zreduction} is the perfection of the variety $N(Y_1,\dots,Y_n)=0$.

\bibliographystyle{amsalpha}
\bibliography{RZspaces}

\end{document}